\renewcommand\hat[1]{%
\savestack{\tmpbox}{\stretchto{%
  \scaleto{%
    \scalerel*[\widthof{\ensuremath{#1}}]{\kern-.6pt\bigwedge\kern-.6pt}%
    {\rule[-\textheight/2]{1ex}{\textheight}}%WIDTH-LIMITED BIG WEDGE
  }{\textheight}% 
}{0.5ex}}%
\stackon[1pt]{#1}{\tmpbox}%
}
\newcommand{\inlineitem}[1][]{%
\ifnum\enit@type=\tw@
    {\descriptionlabel{#1}}
  \hspace{\labelsep}%
\else
  \ifnum\enit@type=\z@
       \refstepcounter{\@listctr}\fi
    \quad\@itemlabel\hspace{\labelsep}%
\fi}
\newcommand{\xrightarrowtail}[1]{\!\!{\xymatrix@C=1em{\ar@{>->}[r]^{#1}&}}\!\!\!}
\newcommand{\xleftarrowtail}[1]{\!\!\!{\xymatrix@C=1em{&\ar@{>->}[l]_{#1}}}\!\!}
\newcommand{\xrightarrowhook}[1]{\!\!{\xymatrix@C=1em{\ar@{^{(}->}[r]^{#1}&}}\!\!\!}
\newcommand{\xleftarrowhook}[1]{\!\!\!{\xymatrix@C=1em{&\ar@{_{(}->}[l]_{#1}}}\!\!}
\newcommand{\xrightarrowiso}[1]{\!\!{\xymatrix@C=1em{\ar@{->}[r]^{#1}_\cong&}}\!\!\!}
\newcommand{\xleftarrowiso}[1]{\!\!\!{\xymatrix@C=1em{&\ar@{->}[l]_{#1}^\cong}}\!\!}
\tikzset{H/.style={draw,color=black,fill={rgb:black,1;white,3}, rectangle}}
\tikzset{rn/.style={}}
\tikzset{simple/.style={}}
\tikzset{none/.style={}}
\tikzset{nothing/.style={}}
\tikzset{thick/.style={draw, line width=0.5mm }}
\tikzset{Z/.style={draw,fill=white, circle,scale=1, inner sep=0pt, minimum size=10pt}}
\tikzset{X/.style={draw,fill={rgb:black,1;white,3}, text=black, circle,scale=1, inner sep=0pt, minimum size=10pt }}
\tikzset{Xthick/.style={draw,fill=white, circle,scale=1, inner sep=0pt, minimum size=15pt,line width=0.5mm}}
\tikzset{Zthick/.style={draw,fill={rgb:black,1;white,3}, text=black, circle,scale=1, inner sep=0pt, minimum size=15pt,line width=0.5mm }}
\tikzset{phase/.style={draw,fill=white, diamond,scale=1, inner sep=0pt, minimum size=10pt}}
\tikzset{discard/.style={draw, xscale=2.2,ground, rotate=90}}
\tikzset{mmixed/.style={draw, quantum, yscale=-2.2,ground, rotate=180}}
\tikzset{quantum/.style={line width=.6mm}}
\tikzset{map/.style={draw,color=black,fill=white, rectangle}}
\tikzset{s/.style={draw,color=black,fill=black, rectangle}}
\tikzset{mapthick/.style={draw,color=black,fill=white, rectangle, inner sep=0pt, minimum size=15pt,line width=0.5mm }}
\tikzset{otimes/.style={draw,fill=white,rotate=45, scale=0.9,minimum height=.1cm,circle,append after command={
[shorten >=\pgflinewidth, shorten <=\pgflinewidth,]
(\tikzlastnode.north) edge (\tikzlastnode.south)
(\tikzlastnode.east) edge (\tikzlastnode.west)
}
}
}
\tikzset{dot/.style={thick, fill=black, circle, scale=1, inner sep = .05cm}}
\tikzset{oplus/.style={draw, scale=0.9,minimum height=.1cm,circle,append after command={
[shorten >=\pgflinewidth, shorten <=\pgflinewidth,]
(\tikzlastnode.north) edge (\tikzlastnode.south)
(\tikzlastnode.east) edge (\tikzlastnode.west)
}
}
}
\tikzset{andin/.style={
draw,
and gate US,
rotate=90,
scale=1,
fill=white,
label={center:{\it \&}}
}}
\tikzset{mulin/.style={
draw,
and gate US,
rotate=90,
scale=1,
fill=white,
label={center:{}}
}}
\tikzset{andout/.style={
draw,
and gate US,
rotate=-90,
scale=1,
fill=white,
label={center:{\it \&}}
}}
\tikzset{tri/.style={
draw,
shape border rotate=-30,
regular polygon,
regular polygon sides=3,
fill={rgb:black,1;white,3},
inner sep = .1cm
}
}
\tikzset{triflip/.style={
draw,
shape border rotate=0,
regular polygon,
regular polygon sides=3,
fill={rgb:black,1;white,3},
inner sep = .1cm
}
}
\tikzset{fanin/.style={
draw,
shape border rotate=30,
regular polygon,
regular polygon sides=3,
fill=white,
inner sep = .1cm
}
}
\tikzset{fanout/.style={
draw,
shape border rotate=-30,
regular polygon,
regular polygon sides=3,
fill=white,
inner sep = .1cm
}
}
\tikzstyle{strings}=[baseline={([yshift=-.5ex]current bounding box.center)}]
\tikzset{every picture/.append style={scale=.55}, transform shape,strings}
  \newtheorem{theorem}{Theorem}[section]
  \newtheorem{lemma}[theorem]{Lemma}
  \newtheorem{definition}[theorem]{Definition}
  \newtheorem{conjecture}[theorem]{Conjecture}
  \newtheorem{remark}[theorem]{Remark}
\newcommand{\Mat}{\mathsf{Mat}}
\newcommand{\FSets}{\mathsf{FinSet}}
\newcommand{\FinOrd}{\mathsf{FinSet}}
\newcommand{\Span}{\mathsf{Span}}
\newcommand{\op}{\mathsf{op}}
\newcommand{\FPinj}{\mathsf{FPinj}}
\newcommand{\FPar}{\mathsf{FPar}}
\newcommand{\FSpan}{\mathsf{FSpan}}
\newcommand{\Par}{\mathsf{Par}}
\newcommand{\Aff}{\mathsf{Aff}}
\newcommand{\ParIso}{\mathsf{ParIso}}
\newcommand{\Fin}{\mathsf{Fd}}
\renewcommand{\sp}{\mathsf{sp}}
\newcommand{\pr}{\mathsf{p}}
\newcommand{\iso}{\mathsf{i}}
\newcommand{\Csp}{{\sf Cospan}}
\newcommand{\Iso}{{\sf Iso}}
\renewcommand{\P}{{\sf p}}
\newcommand{\Prof}{{\sf Prof}}
\newcommand{\inj}{{\sf Mono}}
\newcommand{\surj}{{\sf Epi}}
\newcommand{\sub}{{\sf sub}}
\newcommand{\cm}{{\sf cm}}
\newcommand{\cb}{{\sf cb}}
\newcommand{\Mon}{{\sf Mon}}
\newcommand{\F}{\mathbb{F}}
\newcommand{\f}{\mathsf{f}}
\newcommand{\X}{\mathbb{X}}
\newcommand{\Y}{\mathbb{Y}}
\newcommand{\Z}{\mathbb{Z}}
\newcommand{\N}{\mathbb{N}}
\newcommand{\dom}{{\sf dom}}
\newcommand{\cod}{{\sf cod}}
\newcommand{\ZXA}{\mathsf{ZX}\textit{\&}}
\newcommand{\Vect}{\mathsf{FVect}}
\DeclareMathSymbol{\bot}{\mathord}{symbols}{"3F}
\renewcommand{\epsilon}{\varepsilon}
\renewcommand{\bar}[1]{\overline{#1}\hspace*{.01cm}}
\newcommand{\lbparen}{\{
}
\newcommand{\rbparen}{ \}
}
\newcommand{\skewpullbackcorner}[1][dl]{\save*!/#1-1.1pc/#1:(-.5,1)@^{|>}\restore}
\newcommand{\ev}{{\sf ev}}
\newcommand\numeq[2]%
\title{Distributive Laws, Spans and the ZX-Calculus}
\date{\today}
\author{Cole Comfort\\ Department of Computer Science, University of Oxford}
\newcounter{eq}
\newcommand{\ltxlabel}{\ltx@label}
\newcommand{\eqstack}[1]{%
\stackrel{\scalebox{.6}{(\ref{#1})}}{=}%
}
\newcommand{\eq}[1]{%
\refstepcounter{eq}%
\ltxlabel{#1}%
\eqstack{#1}%
}
\newcommand{\eref}{\eqstack}
\newcommand{\erefop}[1]{%
\stackrel{\scalebox{.6}{$(\ref{#1})^\op$}}{=}%
}
\begin{document}

\maketitle

\newcommand{\cubetopbl}{A}
\newcommand{\cubetopbr}{B}
\newcommand{\cubetopfl}{C}
\newcommand{\cubetopfr}{D}
\newcommand{\cubebotbl}{E}
\newcommand{\cubebotbr}{F}
\newcommand{\cubebotfl}{G}
\newcommand{\cubebotfr}{H}

%
%$$
%\xymatrixrowsep{3mm}\xymatrixcolsep{1mm}
%\xymatrix{
%                                       & \mbox{\cubetopbl} \ar[rr] \ar[dl] \ar[dd]^(.7){\cong}      &                                                  & \mbox{\cubetopbr}  \ar[dd]^{\cong} \ar[dl] \\
%\mbox{\cubetopfl} \ar[rr]  \ar[dd]_{\cong}           &                                                                                              &\mbox{\cubetopfr} \ar@{-->}[dd]^(.35){\cong}   \skewpullbackcorner[ul]              \\
%                                       &  \mbox{\cubebotbl} \ar[dl] \ar[rr]                    &                                                  & \mbox{\cubebotbr} \ar[dl] \\
%\mbox{\cubebotfl} \ar[rr]  &                                                                                             & \mbox{\cubebotfr} \skewpullbackcorner[ul]            
%}
%$$

\begin{abstract}
%Various algebraic structures arise in the ZX-calculus calculus and adjacent graphical calculi.
We modularly build increasingly larger fragments of the ZX-calculus by modularly adding new generators and relations, at each point, giving some concrete semantics in terms of some category of spans.  This is performed using Lack's technique of composing props via distributive laws \cite{lack}, as well as the technique of pushout cubes of Zanasi \cite{zanasi}.
We do this for the fragment of the ZX-calculus with only the black $\pi$-phase (and no Hadamard gate) as well as well as the fragment which additionally has the and gate as a generator (which is equivalent to the natural number H-box fragment of the ZH-calculus).
In the former case, we show that this is equivalent to the full subcategory of spans of (possibly empty) free, finite dimensional affine $\F_2$-vector spaces, where the objects are the non-empty affine vector spaces.  In the latter case, we show that this is equivalent to the full subcategory of spans of finite sets where the objects are powers of the two element set.  Because these fragments of the ZX-calculus have semantics in terms of {\em full subcategories} of categories of spans, they can not be presented by distributive laws over groupoids.  Instead, we first construct their subcategories of partial isomorphisms via distributive laws over all isomorphims with subobjects adjoined.  After which, the full subcategory of spans are obtained by freely adjoining units and counits the the semi-Frobenius structures given by the diagonal and codiagonal maps.

%Because the fragments which we consider all have semantics in terms of spans of sets,  these techniques are particularly well-adapted for this purpose.  However, unlike in the case of spans of matrices over a field, not all pullbacks exist in the settings which we consider. 
%For this reason 
\end{abstract}

We first review some basic theory involving the presentation of props.  These results are mostly folklore, however, I will refer the reader to \cite[\S 2]{zanasi} for a more comprehensive introduction.

\begin{definition}
A {\bf pro} is a strict monoidal category generated by one object under the tensor product, and a {\bf prop} is a  strict {\em symmetric} monoidal category generated by one object under the tensor product

\end{definition}

\begin{definition}
A {\bf monoidal theory} is a pair $(\Sigma,E)$ of {\bf generators} $\Sigma$ and {\bf equations} $E$.
Each generator $f \in \Sigma$ has a chosen domain $\dom (f) \in \N$  and codomain $\cod (f) \in \N$, so that $f$ can be seen as a map from $\dom(f)$ to $\cod (f)$.

The free pro with signature $\Sigma$ has maps in $\Sigma^*$ obtained by inductively  tensoring all the generators and composing all appropriately typed generators in $\Sigma$,
The equations in $E$ are pairs of parallel maps in $\Sigma^*$.
Any monoidal theory $(\Sigma,E)$  generates a pro $\bar{(\Sigma,E)}$ given by the free pro with signature $\Sigma$ modulo the equations in $E$.

A {\bf symmetric monoidal theory} is the symmetric version of a monoidal theory, which generates a prop.  Here the set $\Sigma^*$ is obtained by composing and tensoring maps with symmetries, and then quotienting by the axioms of a prop.
\end{definition}

\begin{lemma}
Given two (symmetric) monoidal theories $(\Sigma_1,E_1)$  and $(\Sigma_2,E_2)$  the coproduct of pro(p)s  $\bar{(\Sigma_1,E_1)}+\bar{(\Sigma_2,E_2)}$ is generated by the (symmetric) monoidal theory $(\Sigma_1+\Sigma_2,E_1+E_2)$.
\end{lemma}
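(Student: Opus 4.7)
The plan is to prove this via the universal property of the free prop construction. Concretely, the assignment $(\Sigma,E) \mapsto \bar{(\Sigma,E)}$ is the object part of a left adjoint from the category of (symmetric) monoidal theories (with the obvious morphisms of signatures respecting equations) to the category of pro(p)s: prop morphisms $\bar{(\Sigma,E)}\to \mathbb{P}$ correspond bijectively to interpretations of the generators in $\Sigma$ as typed morphisms in $\mathbb{P}$ such that every equation in $E$ is satisfied. As a left adjoint, this functor preserves all colimits, so in particular it preserves coproducts; since the coproduct of monoidal theories is plainly computed componentwise as $(\Sigma_1+\Sigma_2,E_1+E_2)$, the result follows.

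To make this concrete without invoking adjoint-functor machinery, I would first construct the canonical prop morphisms $\iota_i\colon \bar{(\Sigma_i,E_i)} \to \bar{(\Sigma_1+\Sigma_2,E_1+E_2)}$ induced by the signature inclusions $\Sigma_i \hookrightarrow \Sigma_1+\Sigma_2$ (these are well defined because the equations of $E_i$ remain equations in the combined theory). Then, given any prop $\mathbb{P}$ together with prop morphisms $F_i\colon \bar{(\Sigma_i,E_i)} \to \mathbb{P}$, I would define the cotupling $[F_1,F_2]\colon \bar{(\Sigma_1+\Sigma_2,E_1+E_2)} \to \mathbb{P}$ on generators by $F_1$ on $\Sigma_1$ and $F_2$ on $\Sigma_2$, extend freely by composition, tensor, and symmetries, and verify that all equations in $E_1+E_2$ are satisfied (which holds since each $F_i$ already respects $E_i$). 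Uniqueness is immediate because a prop morphism is determined by its action on generators.

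I do not expect any real obstacle here: the proof is entirely formal bookkeeping, and the only slightly delicate point is checking that the free pro(p) functor really is functorial on signature morphisms that identify generators across theories — but since the equations of the combined theory are precisely the disjoint union $E_1+E_2$, no new identifications are forced on either summand, and the inclusions $\iota_i$ are faithful on the generating data. The symmetric and non-symmetric cases are handled identically, with the only difference being whether one closes under symmetries when forming $\Sigma^*$.
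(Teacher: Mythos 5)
Your proof is correct. The paper states this lemma without proof, treating it as folklore (deferring to the background in Zanasi's work), and your argument is exactly the standard one that fills that gap: the universal property of $\bar{(\Sigma,E)}$ --- prop morphisms out of it correspond to interpretations of $\Sigma$ satisfying $E$ --- immediately gives that $\bar{(\Sigma_1+\Sigma_2,E_1+E_2)}$ has the universal property of the coproduct, and your concrete construction of the injections and the cotupling, with uniqueness on generators, is the right way to make this self-contained. The only remark worth making is that the adjoint-functor framing and the explicit cotupling argument are the same proof in two guises, and nothing in the lemma requires the injections $\iota_i$ to be faithful, so that aside is unnecessary (and indeed faithfulness can fail for general theories, though it is not needed here).
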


\begin{lemma}
Given three  (symmetric) monoidal theories $(\Sigma_1,E_1)$, $(\Sigma_2,E_2)$ and $(\Sigma_3,E_3)$ where $\bar{(\Sigma_3,E_3)}$ is a sub-pro(p) of both $\bar{(\Sigma_1,E_1)}$ and $\bar{(\Sigma_2,E_2)}$.  The pushout of the diagram of pro(p)s
$$
\bar{(\Sigma_1,E_1)} \leftarrow \bar{(\Sigma_3,E_3)}\rightarrow \bar{(\Sigma_2,E_2)}
$$
is generated by the (symmetric) monoidal theory $(\Sigma_1^* +_{\Sigma_3} \Sigma_2^*, E_1 + E_2)$.
\end{lemma}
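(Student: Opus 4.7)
The plan is to compute this pushout as a coequalizer of the coproduct, using that the category of pro(p)s is cocomplete. By the previous lemma the coproduct $\bar{(\Sigma_1,E_1)} + \bar{(\Sigma_2,E_2)}$ is presented by $(\Sigma_1+\Sigma_2, E_1+E_2)$; write $\kappa_1,\kappa_2$ for its coproduct injections and $\iota_i : \bar{(\Sigma_3,E_3)} \hookrightarrow \bar{(\Sigma_i,E_i)}$ for the given sub-pro(p) inclusions. Then the pushout is the coequalizer of the parallel pair $\kappa_1 \iota_1, \kappa_2 \iota_2 : \bar{(\Sigma_3,E_3)} \rightrightarrows \bar{(\Sigma_1+\Sigma_2, E_1+E_2)}$.

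Coequalizers of pro(p)s are computed by adjoining equations. It suffices to impose the equality $\kappa_1 \iota_1(\sigma) = \kappa_2 \iota_2(\sigma)$ for each generator $\sigma \in \Sigma_3$, since by functoriality this extends automatically to agreement on every composite morphism of $\bar{(\Sigma_3,E_3)}$. Crucially, the equations $E_3$ do not need to be added separately: they already hold in each $\bar{(\Sigma_i,E_i)}$ (because $\bar{(\Sigma_3,E_3)}$ is a sub-pro(p) of both), hence in their coproduct, and a fortiori after the coequalizer is taken. Consequently only the equations $E_1+E_2$ remain, and the effective generating set is obtained from $\Sigma_1 \sqcup \Sigma_2$ by gluing along the images of $\Sigma_3$, which is precisely the set-theoretic pushout $\Sigma_1^* +_{\Sigma_3} \Sigma_2^*$.

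The main delicate point will be bookkeeping: a generator $\sigma \in \Sigma_3$ need not be an atomic generator in either $\Sigma_1$ or $\Sigma_2$, so $\iota_i(\sigma)$ can be a composite morphism in $\Sigma_i^*$. One must verify that imposing $\iota_1(\sigma)=\iota_2(\sigma)$ in the free pro(p) faithfully realises the set-theoretic amalgamation $\Sigma_1^* +_{\Sigma_3} \Sigma_2^*$ and introduces no additional collapses. I would cross-check this by unpacking the pushout universal property directly, confirming that pro(p) morphisms out of $\bar{(\Sigma_1^* +_{\Sigma_3} \Sigma_2^*, E_1+E_2)}$ biject with pairs of pro(p) morphisms out of $\bar{(\Sigma_1,E_1)}$ and $\bar{(\Sigma_2,E_2)}$ that agree after restriction along $\iota_1$ and $\iota_2$.
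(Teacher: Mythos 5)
The paper never proves this lemma: it is stated as folklore, with the reader pointed to Zanasi's \S 2 for background, so there is no in-paper argument to compare against. Your coequalizer-of-the-coproduct route is the standard way to establish it, and it is sound: in the category of pro(p)s all morphisms are identity on objects, so the pushout is indeed the quotient of the coproduct $\bar{(\Sigma_1+\Sigma_2, E_1+E_2)}$ by the monoidal congruence generated by $\kappa_1\iota_1(\sigma)=\kappa_2\iota_2(\sigma)$ for $\sigma\in\Sigma_3$; agreement on generators propagates to all of $\bar{(\Sigma_3,E_3)}$ because both composites are strict (symmetric) monoidal functors, and the equations $E_3$ come for free since the $\iota_i$ are defined on the quotient $\bar{(\Sigma_3,E_3)}$. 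You correctly flag the only delicate point, namely that $\iota_i(\sigma)$ may be a composite in $\Sigma_i^*$ rather than a single generator, so the identification is an added equation rather than a literal gluing of generating symbols; the cleanest way to discharge this, as you propose, is the universal-property check that maps out of $\bar{(\Sigma_1^* +_{\Sigma_3} \Sigma_2^*, E_1+E_2)}$ correspond to pairs of maps agreeing after restriction along $\iota_1,\iota_2$ --- that verification is routine but is the actual content, and your write-up leaves it as a plan rather than carrying it out. With that step spelled out your argument is complete, and it is as precise as the (informally stated) lemma allows.
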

%
%\begin{definition}
%Informally, given two small categories $\X,\Y$ with the same objects; a distributive law on $\Y \otimes \X$ is a way to turn formal composites of maps in $\Y$ followed by $\X$ into a category.  That is to say, a quotient which gives us a way to turn composites of the form $\xrightarrow{f \in \X} \xrightarrow{g \in \Y}$  into ones of the form  $\xrightarrow{g' \in \Y} \xrightarrow{f' \in \X}$.
%
%If $\X$ and $\Y$ have some shared structure witnessed by a subcategory $\Z$ with the same objects, a relaxed distributive law $\Y \otimes_{\Z} \X$ is like a distributive law $\Y \otimes \X$, except where the maps in $\Y$ can be identified as either being in $\X$ or in $\Y$.
%\footnote{These are called distributive laws because small categories are monads in the bicategory $\Span(\Sets)$; where distributive laws in the first sense are distributive laws of the corresponding monads in $\Span(\Sets)$.  In the latter case, these are given by distributive laws of bimodules of spans of sets (ie. distributive laws in small profunctors).}
%\end{definition}
%
%This second notion of distributive law will be needed when $\X$ and $\Y$ are props; because they (minimally) share a subcategory $\P$ of permutations.

We recall the novel way to compose pro(p), first described in \cite{lack}:
\begin{definition}
Suppose there three  (symmetric) monoidal theories $(\Sigma_1,E_1)$, $(\Sigma_2,E_2)$ and $(\Sigma_3,E_3)$ where $\bar{(\Sigma_3,E_3)}$ is a sub-pro(p) of both $\bar{(\Sigma_1,E_1)}$ and $\bar{(\Sigma_2,E_2)}$. A {\bf distributive law of pro(p)s} is a distributive law $\lambda:\bar{(\Sigma_2,E_2)} \otimes_{\bar{(\Sigma_3,E_3)}} \bar{ (\Sigma_1,E_1)}$   in $\Mon$-$\Prof$.  Informally, this is a way to push all the maps in $\Sigma_1^*$ past those of  $\Sigma_2^*$ modulo $\Sigma_3$ and the equations $E_1+E_2$ and the axioms of a pro(p).
\end{definition}

In \cite{lack} it is required that $\bar{(\Sigma_3,E_3)}$ is a groupoid; however, we must loosen this requirement (note that when this is not a groupoid, there is no correspondence to factorization systems as in \cite{rosebrugh}).  

\begin{lemma}
Suppose that we have three (symmetric) monoidal theories and a distribtuive law $\lambda:\bar{(\Sigma_2,E_2)} \otimes_{\bar{(\Sigma_3,E_3)}} \bar{ (\Sigma_1,E_1)}$ as above.

Then the induced pro(p) $\bar{(\Sigma_2,E_2)} \otimes_{\bar{(\Sigma_3,E_3)}} \bar{ (\Sigma_1,E_1)}$ is presented by the monoidal theory\\ ${(\Sigma_1^* +_{\Sigma_3} \Sigma_2^*, E_1 + E_2+E_\lambda)}$, where $E_\lambda$ are all the equations needed to push elements of $\Sigma_1^*$ past those of $ \Sigma_2^*$ up to  $\Sigma_3^*$, dictated by $\lambda$.
\end{lemma}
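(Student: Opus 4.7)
The plan is to exhibit mutually inverse pro(p) morphisms between the pro(p) $P$ presented by $(\Sigma_1^* +_{\Sigma_3} \Sigma_2^*, E_1 + E_2 + E_\lambda)$ and the composite pro(p) $Q := \bar{(\Sigma_2,E_2)} \otimes_{\bar{(\Sigma_3,E_3)}} \bar{(\Sigma_1,E_1)}$ arising from the distributive law $\lambda$.

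First I would construct a morphism $\varphi : P \to Q$. The unit of the composition in $\Mon$-$\Prof$ supplies structure morphisms $\bar{(\Sigma_i, E_i)} \to Q$ for $i \in \{1,2\}$ that agree on the shared sub-pro(p) $\bar{(\Sigma_3, E_3)}$; these assemble into a single pro(p) morphism out of the pushout $\bar{(\Sigma_1,E_1)} +_{\bar{(\Sigma_3,E_3)}} \bar{(\Sigma_2,E_2)}$, which by the preceding lemma is presented by $(\Sigma_1^* +_{\Sigma_3} \Sigma_2^*, E_1 + E_2)$. By the very definition of $\lambda$, each rewrite in $E_\lambda$ of the form ``$f_2$ after $f_1$ equals its $\lambda$-pushed form'' holds tautologically in $Q$, so $\varphi$ descends through the extra equations.

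Second I would construct $\psi : Q \to P$. Every morphism of $Q$ is, by the coend defining profunctor composition, an equivalence class of pairs $(f_1, f_2)$ with $f_1$ in $\bar{(\Sigma_1, E_1)}$ followed by $f_2$ in $\bar{(\Sigma_2, E_2)}$, quotiented by the $\Sigma_3$-sliding relations and composed via $\lambda$. Send such a pair to the composite $f_2 \circ f_1$ in $P$ and extend by tensor. The $\Sigma_3$-sliding identifications become equalities in $P$ because $\Sigma_3$ is shared in the coproduct $\Sigma_1^* +_{\Sigma_3} \Sigma_2^*$, and composition in $Q$ is respected because $E_\lambda$ literally encodes the $\lambda$-pushes needed to re-normalise ``$\Sigma_2$ after $\Sigma_1$''. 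Checking $\varphi \circ \psi = \mathrm{id}_Q$ and $\psi \circ \varphi = \mathrm{id}_P$ is then an inspection on generators and on the normal-form representatives in $Q$.

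The main obstacle will be the well-definedness of $\psi$ precisely in the non-groupoid case flagged by the paper. When $\bar{(\Sigma_3, E_3)}$ fails to be a groupoid, the coend quotient in $\Mon$-$\Prof$ is not an orbit quotient under an invertible reparameterisation, so two representatives of a single morphism of $Q$ are connected only by zig-zags of one-sided $\Sigma_3$-rewrites rather than by a single isomorphism. The verification reduces to an induction on the length of such a zig-zag, showing at each step that $E_\lambda$ together with the $\Sigma_3$-sharing in the signature suffices to identify the corresponding elements of $\Sigma_1^* +_{\Sigma_3} \Sigma_2^*$ modulo $E_1 + E_2 + E_\lambda$. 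Once this coherence is in place, the rest of the argument follows the template of \cite{lack,zanasi}.
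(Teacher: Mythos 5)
The paper never proves this lemma: it is stated in the preliminaries as folklore, with the reader deferred to \cite{lack} and \cite[\S 2]{zanasi}, so there is no internal proof to compare yours against. Your sketch reconstructs the standard argument from those sources and is sound in outline: the pushout presentation $(\Sigma_1^*+_{\Sigma_3}\Sigma_2^*,E_1+E_2)$ maps to the composite because the two structure maps agree on the shared sub-pro(p), the equations $E_\lambda$ hold in the composite by the very definition of $\lambda$, and conversely the coend description of the composite maps back by composing representatives. Two points should be tightened. First, a convention slip: in $\bar{(\Sigma_2,E_2)}\otimes_{\bar{(\Sigma_3,E_3)}}\bar{(\Sigma_1,E_1)}$ as this paper uses it (compare $\surj\otimes_\P\inj$ and the epi--mono factorisation), representatives carry their $\Sigma_2$-part before their $\Sigma_1$-part in diagrammatic order, and $E_\lambda$ rewrites composites of the shape ``$\Sigma_1$-map then $\Sigma_2$-map'' into that normal form; your pairs $(f_1,f_2)$ read the opposite way, which is harmless but should be aligned with the stated direction of $\lambda$. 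Second, the crux you correctly identify --- well-definedness of $\psi$ over the coend when $\bar{(\Sigma_3,E_3)}$ is not a groupoid --- deserves to be spelled out: each elementary sliding of a $\Sigma_3^*$-map from one leg of a representative to the other is killed in $P$ simply because both representatives have literally the same underlying composite there (the $\Sigma_3$-generators being identified in $\Sigma_1^*+_{\Sigma_3}\Sigma_2^*$), so the zig-zag induction is immediate; what actually requires the distributive-law axioms (compatibility of $\lambda$ with the units and multiplications in $\Mon$-$\Prof$) is the functoriality of $\psi$ and the verification $\varphi\circ\psi=\mathrm{id}$, not the sliding itself, and your appeal to ``$E_\lambda$ literally encodes the pushes'' should be expanded to invoke those axioms explicitly.
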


There is a folklore result that the prop for the free commutative monoid is equivalent to the category of finite sets and functions under the direct sum.  The string diagrams correspond to drawing the ``graphs'' of functions.

\begin{definition}
Let $\inj$ be the free prop with one generator of type $0\to 1$  drawn as a black circle; and $\surj$ be the prop generated by the free associative commutative binary operationm graphically generated by the following\footnote{Diagrams are read from bottom to top.}:
  $$
  \begin{tikzpicture}[rotate=90]
	\begin{pgfonlayer}{nodelayer}
		\node [style=X] (0) at (-9, -0) {};
		\node [style=none] (1) at (-8.25, -0) {};
		\node [style=X] (2) at (-9.75, 0.25) {};
		\node [style=none] (3) at (-9.75, -0.25) {};
		\node [style=none] (4) at (-10.5, 0.5) {};
		\node [style=none] (5) at (-10.5, -0) {};
		\node [style=none] (6) at (-10.5, -0.25) {};
	\end{pgfonlayer}
	\begin{pgfonlayer}{edgelayer}
		\draw [in=-150, out=0, looseness=1.00] (3.center) to (0);
		\draw [in=150, out=0, looseness=1.00] (2.center) to (0);
		\draw (0) to (1.center);
		\draw (3.center) to (6.center);
		\draw [in=-150, out=0, looseness=1.00] (5.center) to (2);
		\draw [in=0, out=150, looseness=1.00] (2) to (4.center);
	\end{pgfonlayer}
\end{tikzpicture}
  \eq{assoc}
  \begin{tikzpicture}[rotate=90,yscale=-1]
	\begin{pgfonlayer}{nodelayer}
		\node [style=X] (0) at (-9, -0) {};
		\node [style=none] (1) at (-8.25, -0) {};
		\node [style=X] (2) at (-9.75, 0.25) {};
		\node [style=none] (3) at (-9.75, -0.25) {};
		\node [style=none] (4) at (-10.5, 0.5) {};
		\node [style=none] (5) at (-10.5, -0) {};
		\node [style=none] (6) at (-10.5, -0.25) {};
	\end{pgfonlayer}
	\begin{pgfonlayer}{edgelayer}
		\draw [in=-150, out=0, looseness=1.00] (3.center) to (0);
		\draw [in=150, out=0, looseness=1.00] (2.center) to (0);
		\draw (0) to (1.center);
		\draw (3.center) to (6.center);
		\draw [in=-150, out=0, looseness=1.00] (5.center) to (2);
		\draw [in=0, out=150, looseness=1.00] (2) to (4.center);
	\end{pgfonlayer}
\end{tikzpicture},
  \hspace*{1cm}
  \begin{tikzpicture}[rotate=90]
	\begin{pgfonlayer}{nodelayer}
		\node [style=X] (0) at (-9, -0) {};
		\node [style=none] (1) at (-8.25, -0) {};
		\node [style=none] (2) at (-9.75, 0.25) {};
		\node [style=none] (3) at (-9.75, -0.25) {};
		\node [style=none] (4) at (-10.5, 0.25) {};
		\node [style=none] (5) at (-10.5, -0.25) {};
	\end{pgfonlayer}
	\begin{pgfonlayer}{edgelayer}
		\draw [in=-150, out=0, looseness=1.00] (3.center) to (0);
		\draw [in=150, out=0, looseness=1.00] (2.center) to (0);
		\draw (0) to (1.center);
		\draw [in=180, out=0, looseness=1.00] (5.center) to (2.center);
		\draw [in=0, out=180, looseness=1.00] (3.center) to (4.center);
	\end{pgfonlayer}
  \end{tikzpicture}
  \eq{comm}
  \begin{tikzpicture}[rotate=90]
	\begin{pgfonlayer}{nodelayer}
		\node [style=X] (0) at (-9, -0) {};
		\node [style=none] (1) at (-8.25, -0) {};
		\node [style=none] (2) at (-9.75, 0.25) {};
		\node [style=none] (3) at (-9.75, -0.25) {};
	\end{pgfonlayer}
	\begin{pgfonlayer}{edgelayer}
		\draw [in=-150, out=0, looseness=1.00] (3.center) to (0);
		\draw [in=150, out=0, looseness=1.00] (2.center) to (0);
		\draw (0) to (1.center);
	\end{pgfonlayer}
\end{tikzpicture}
  $$
\end{definition}

\begin{definition}
Let $\inj(\X)$ and $\surj(\X)$ denote the subcategories of monomorphisms and epimorphisms of $\X$.
\end{definition}

\begin{lemma}
The category $(\inj(\FSets),+)$ is presented by the prop $\inj$ and $(\surj(\FSets),+)$ by the prop $\surj$.
Moreover, $(\FSets,+)$ is presented by the distributive law:
$$
\surj \otimes_\P \inj;
  \begin{tikzpicture}[rotate=90]
	\begin{pgfonlayer}{nodelayer}
		\node [style=X] (0) at (-9, -0) {};
		\node [style=none] (1) at (-8.25, -0) {};
		\node [style=X] (2) at (-9.75, 0.25) {};
		\node [style=none] (3) at (-10, -0.25) {};
	\end{pgfonlayer}
	\begin{pgfonlayer}{edgelayer}
		\draw [in=-150, out=0, looseness=1.00] (3.center) to (0);
		\draw [in=150, out=0, looseness=1.00] (2.center) to (0);
		\draw (0) to (1.center);
	\end{pgfonlayer}
  \end{tikzpicture}
=
  \begin{tikzpicture}[rotate=90,yscale=-1]
	\begin{pgfonlayer}{nodelayer}
		\node [style=X] (0) at (-9, -0) {};
		\node [style=none] (1) at (-8.25, -0) {};
		\node [style=X] (2) at (-9.75, 0.25) {};
		\node [style=none] (3) at (-10, -0.25) {};
	\end{pgfonlayer}
	\begin{pgfonlayer}{edgelayer}
		\draw [in=-150, out=0, looseness=1.00] (3.center) to (0);
		\draw [in=150, out=0, looseness=1.00] (2.center) to (0);
		\draw (0) to (1.center);
	\end{pgfonlayer}
  \end{tikzpicture}
  \eq{unit}
  \begin{tikzpicture}[rotate=90]
	\begin{pgfonlayer}{nodelayer}
		\node [style=none] (0) at (-9, 0.25) {};
		\node [style=none] (1) at (-9.75, 0.25) {};
	\end{pgfonlayer}
	\begin{pgfonlayer}{edgelayer}
		\draw (1) to (0.center);
	\end{pgfonlayer}
  \end{tikzpicture}
$$
Yielding the prop for the free commutative monoid, $\cm$.

\end{lemma}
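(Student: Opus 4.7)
The plan is to establish the three presentation claims separately and then assemble them using the preceding distributive-law lemma. For $(\inj(\FSets),+)\cong\inj$, I would define a prop morphism $F:\inj\to\inj(\FSets)$ sending the unique generator $\eta$ to the inclusion $0\hookrightarrow 1$, and show $F$ is an isomorphism by exhibiting a normal form: any monomorphism $f:n\to m$ in $\FSets$ is determined by its image, so it factors as a permutation of $m$ composed with $\mathrm{id}_n\otimes\eta^{\otimes(m-n)}$ composed with a permutation of $n$. Faithfulness is immediate because $\inj$ has no equations beyond the prop axioms, which already identify any two such normal forms naming the same injection. The argument for $(\surj(\FSets),+)\cong\surj$ is dual: a surjection $n\twoheadrightarrow m$ corresponds to an ordered partition of $n$ into $m$ non-empty blocks, built from binary multiplications, identities, and permutations, with associativity and commutativity identifying the different builds.

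For the distributive law, the key ingredient is that $\FSets$ has an orthogonal epi-mono factorisation system whose intersection is the groupoid of permutations $\P$. This factorisation system defines $\lambda$ by epi-mono factoring every mixed composite. Because $\inj$ and $\surj$ each have only one non-trivial generator, $\lambda$ is determined by its action on the single mixed composite $\mu\circ(\eta\otimes\mathrm{id})$, where $\mu$ is the $\surj$-generator; in $\FSets$ this composite evaluates to the identity whose epi-mono factorisation is trivial. This forces $E_\lambda$ to consist of exactly the unit equation displayed in the statement, and the preceding lemma then presents $\surj\otimes_\P\inj$ by one $0\to 1$ generator, one $2\to 1$ generator, and the equations of associativity, commutativity, and the unit: this is precisely the standard presentation of the free commutative monoid prop $\cm$.

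The main technical obstacle is verifying that one equation really is enough as $E_\lambda$: that every longer mixed composite of $\inj$- and $\surj$-generators can be rewritten into epi-then-mono normal form using only the unit equation together with the axioms already present in $\inj$, $\surj$, and the prop structure. This reduces to the standard fact that epi-mono factorisation in $\FSets$ is unique up to the $\P$-action, together with a check that the coherence conditions for a distributive law in $\Mon$-$\Prof$ are automatic once the factorisation system is orthogonal.
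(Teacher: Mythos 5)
Your proposal is correct and takes essentially the route the paper intends: the paper states this lemma as folklore, relying on Lack's distributive-law machinery and the epi--mono factorisation of functions in $(\FSets,+)$ (the remark citing Bonchi, Ex.~3.3(a)), which is exactly the argument you flesh out — present $\inj$ and $\surj$ by normal forms, let the orthogonal factorisation system over the permutation groupoid $\P$ induce the law, and observe that the single unit equation generates $E_\lambda$, yielding $\cm$. The only quibble is the phrase ``determined by its image'' (an injection is determined by the function itself), but your normal form through $\mathrm{id}_n\otimes\eta^{\otimes(m-n)}$ and a permutation is the right one, so nothing essential is affected.
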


As observed in \cite[Ex. 3.3 (a)]{bonchi}, this distributive law corresponds to the epi-mono factorization of functions: one can always decompose a function into surjections following by injections by applying this equation multiple times.

Lack showed that the structure of special commutative Frobenius algebras and bicommutative bialgebras arise in terms of distributive laws corresponding to pushout and pullback in $(\FSets,+)$  \cite[\S 5.3, 5.4]{lack}:

\begin{definition}
Consider the following two distributive laws: 
\begin{align*}
\cm^\op  \otimes_\P \cm;&
  \begin{tikzpicture}
	\begin{pgfonlayer}{nodelayer}
		\node [style=X] (0) at (-3.75, -1) {};
		\node [style=none] (1) at (-4, -1.75) {};
		\node [style=none] (2) at (-3.5, -1.75) {};
		\node [style=Z] (3) at (-3.75, -0.25) {};
		\node [style=none] (4) at (-4, 0.5) {};
		\node [style=none] (5) at (-3.5, 0.5) {};
	\end{pgfonlayer}
	\begin{pgfonlayer}{edgelayer}
		\draw [in=90, out=-60, looseness=1.00] (0) to (2.center);
		\draw [in=-120, out=90, looseness=1.00] (1.center) to (0);
		\draw (0) to (3);
		\draw [in=60, out=-90, looseness=1.00] (5.center) to (3);
		\draw [in=-90, out=120, looseness=1.00] (3) to (4.center);
	\end{pgfonlayer}
  \end{tikzpicture}
  \eq{bi.one}
  \begin{tikzpicture}
	\begin{pgfonlayer}{nodelayer}
		\node [style=X] (0) at (-4, 0.5) {};
		\node [style=Z] (1) at (-4, -0.25) {};
		\node [style=X] (2) at (-4.5, 0.5) {};
		\node [style=Z] (3) at (-4.5, -0.25) {};
		\node [style=none] (4) at (-4, -1) {};
		\node [style=none] (5) at (-4.5, -1) {};
		\node [style=none] (6) at (-4.5, 1.25) {};
		\node [style=none] (7) at (-4, 1.25) {};
	\end{pgfonlayer}
	\begin{pgfonlayer}{edgelayer}
		\draw [bend left, looseness=1.25] (0) to (1);
		\draw [bend right, looseness=1.25] (2) to (3);
		\draw (1) to (2);
		\draw (3) to (0);
		\draw (0) to (7.center);
		\draw (6.center) to (2);
		\draw (3) to (5.center);
		\draw (4.center) to (1);
	\end{pgfonlayer}
\end{tikzpicture},
\hspace*{.5cm}
  \begin{tikzpicture}
	\begin{pgfonlayer}{nodelayer}
		\node [style=Z] (0) at (-4, -0) {};
		\node [style=X] (1) at (-4, -0.75) {};
		\node [style=none] (2) at (-4.25, -1.5) {};
		\node [style=none] (3) at (-3.75, -1.5) {};
	\end{pgfonlayer}
	\begin{pgfonlayer}{edgelayer}
		\draw [in=-60, out=90, looseness=1.00] (3.center) to (1);
		\draw (1) to (0);
		\draw [in=90, out=-120, looseness=1.00] (1) to (2.center);
	\end{pgfonlayer}
  \end{tikzpicture}
  \eq{bi.two}
  \begin{tikzpicture}
	\begin{pgfonlayer}{nodelayer}
		\node [style=Z] (0) at (-4.25, -0.75) {};
		\node [style=none] (1) at (-4.25, -1.5) {};
		\node [style=none] (2) at (-3.5, -1.5) {};
		\node [style=Z] (3) at (-3.5, -0.75) {};
	\end{pgfonlayer}
	\begin{pgfonlayer}{edgelayer}
		\draw (2.center) to (3);
		\draw (0) to (1.center);
	\end{pgfonlayer}
  \end{tikzpicture},
  \hspace*{.5cm}
   \begin{tikzpicture}[yscale=-1]
	\begin{pgfonlayer}{nodelayer}
		\node [style=X] (0) at (-4, -0) {};
		\node [style=Z] (1) at (-4, -0.75) {};
		\node [style=none] (2) at (-4.25, -1.5) {};
		\node [style=none] (3) at (-3.75, -1.5) {};
	\end{pgfonlayer}
	\begin{pgfonlayer}{edgelayer}
		\draw [in=-60, out=90, looseness=1.00] (3.center) to (1);
		\draw (1) to (0);
		\draw [in=90, out=-120, looseness=1.00] (1) to (2.center);
	\end{pgfonlayer}
  \end{tikzpicture}
  \erefop{bi.two}
   \begin{tikzpicture}[yscale=-1]
	\begin{pgfonlayer}{nodelayer}
		\node [style=X] (0) at (-4.25, -0.75) {};
		\node [style=none] (1) at (-4.25, -1.5) {};
		\node [style=none] (2) at (-3.5, -1.5) {};
		\node [style=X] (3) at (-3.5, -0.75) {};
	\end{pgfonlayer}
	\begin{pgfonlayer}{edgelayer}
		\draw (2.center) to (3);
		\draw (0) to (1.center);
	\end{pgfonlayer}
  \end{tikzpicture},
\hspace*{.5cm}
  \begin{tikzpicture}[rotate=90]
	\begin{pgfonlayer}{nodelayer}
		\node [style=Z] (0) at (-8.25, -0) {};
		\node [style=X] (1) at (-9.25, -0) {};
	\end{pgfonlayer}
	\begin{pgfonlayer}{edgelayer}
		\draw (0) to (1);
	\end{pgfonlayer}
\end{tikzpicture}
  \eq{extra}
\\
 \cm \otimes_\P \cm^\op;&
    \begin{tikzpicture}[rotate=90]
	\begin{pgfonlayer}{nodelayer}
		\node [style=X] (0) at (-6.25, 0.25) {};
		\node [style=none] (1) at (-7, 0.25) {};
		\node [style=none] (2) at (-4.75, 0.25) {};
		\node [style=X] (3) at (-5.5, 0.25) {};
	\end{pgfonlayer}
	\begin{pgfonlayer}{edgelayer}
		\draw (0) to (1.center);
		\draw (3) to (2.center);
		\draw [bend right, looseness=1.25] (3) to (0);
		\draw [bend right, looseness=1.25] (0) to (3);
	\end{pgfonlayer}
  \end{tikzpicture}
  \eq{special}
  \begin{tikzpicture}[rotate=90]
	\begin{pgfonlayer}{nodelayer}
		\node [style=none] (0) at (-7, 0.25) {};
		\node [style=none] (1) at (-6, 0.25) {};
	\end{pgfonlayer}
	\begin{pgfonlayer}{edgelayer}
		\draw (1.center) to (0.center);
	\end{pgfonlayer}
  \end{tikzpicture},
  \hspace*{.5cm}
  \begin{tikzpicture}[rotate=90]
	\begin{pgfonlayer}{nodelayer}
		\node [style=X] (0) at (-7, -0) {};
		\node [style=X] (1) at (-6.25, 0.5) {};
		\node [style=none] (2) at (-7, 0.75) {};
		\node [style=none] (3) at (-7.75, 0.75) {};
		\node [style=none] (4) at (-7.75, -0) {};
		\node [style=none] (5) at (-6.25, -0.25) {};
		\node [style=none] (6) at (-5.5, -0.25) {};
		\node [style=none] (7) at (-5.5, 0.5) {};
	\end{pgfonlayer}
	\begin{pgfonlayer}{edgelayer}
		\draw (6.center) to (5.center);
		\draw [in=-30, out=180, looseness=1.00] (5.center) to (0);
		\draw (1) to (0);
		\draw [in=0, out=150, looseness=1.00] (1) to (2.center);
		\draw (2.center) to (3.center);
		\draw (0) to (4.center);
		\draw (1) to (7.center);
	\end{pgfonlayer}
  \end{tikzpicture}
 =
  \begin{tikzpicture}[rotate=90,xscale=-1]
	\begin{pgfonlayer}{nodelayer}
		\node [style=X] (0) at (-7, -0) {};
		\node [style=X] (1) at (-6.25, 0.5) {};
		\node [style=none] (2) at (-7, 0.75) {};
		\node [style=none] (3) at (-7.75, 0.75) {};
		\node [style=none] (4) at (-7.75, -0) {};
		\node [style=none] (5) at (-6.25, -0.25) {};
		\node [style=none] (6) at (-5.5, -0.25) {};
		\node [style=none] (7) at (-5.5, 0.5) {};
	\end{pgfonlayer}
	\begin{pgfonlayer}{edgelayer}
		\draw (6.center) to (5.center);
		\draw [in=-30, out=180, looseness=1.00] (5.center) to (0);
		\draw (1) to (0);
		\draw [in=0, out=150, looseness=1.00] (1) to (2.center);
		\draw (2.center) to (3.center);
		\draw (0) to (4.center);
		\draw (1) to (7.center);
	\end{pgfonlayer}
  \end{tikzpicture}
  \eq{frob}
  \begin{tikzpicture}[rotate=90]
	\begin{pgfonlayer}{nodelayer}
		\node [style=none] (0) at (-4.75, -0.25) {};
		\node [style=X] (1) at (-5.5, -0) {};
		\node [style=none] (2) at (-7, -0.25) {};
		\node [style=X] (3) at (-6.25, 0) {};
		\node [style=none] (4) at (-4.75, 0.25) {};
		\node [style=none] (5) at (-7, 0.25) {};
	\end{pgfonlayer}
	\begin{pgfonlayer}{edgelayer}
		\draw [in=-30, out=180, looseness=1.25] (0.center) to (1);
		\draw (3) to (1);
		\draw [in=180, out=30, looseness=1.25] (1) to (4.center);
		\draw [in=0, out=-150, looseness=1.25] (3) to (2.center);
		\draw [in=0, out=150, looseness=1.25] (3) to (5.center);
	\end{pgfonlayer}
\end{tikzpicture}
  \end{align*}

The former yields, {\sf cb}, the prop for the free {\bf bicommutative bialgebra} and the latter yields, {\sf scfa}, the prop for the free {\bf special commutative Frobenius algebra}.

\end{definition}

\begin{lemma} \cite[\S 5.3, 5.4]{lack}
{\sf cb} is a presentation for $(\Span(\FSets),+)$ and {\sf scfa} is a presentation for $(\Csp(\FSets),+)$.

\end{lemma}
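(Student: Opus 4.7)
The plan is to realize each of $\cb$ and $\mathsf{scfa}$ as the category of spans (resp.\ cospans) of finite sets by exploiting the normal forms that the distributive law provides. Using the preceding lemma, $\cm$ presents $(\FSets,+)$ and therefore $\cm^\op$ presents its opposite. The first step is to define prop morphisms $\cb \to (\Span(\FSets),+)$ and $\mathsf{scfa} \to (\Csp(\FSets),+)$ sending each copy or merge generator to the obvious basic span (resp.\ cospan) in which one leg is an identity and the other is the corresponding function to or from the point. One then verifies that the defining distributive-law equations (\ref{bi.one}), (\ref{bi.two}), (\ref{extra}) hold in the span case, and (\ref{special}), (\ref{frob}) hold in the cospan case: each equation amounts to a concrete computation of a pullback (resp.\ pushout) in $\FSets$, for instance that the pullback of two codiagonals is the ``four-fold parallel copy'' span and the pullback of two distinct point-inclusions is empty.

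To establish that these prop morphisms are isomorphisms, I would invoke the distributive law lemma to rewrite every morphism of $\cb$ into the normal form $f^\op ; g$ with $f : X \to A$ and $g : X \to B$ in $\cm$; this is precisely the data of a span $A \xleftarrow{f} X \xrightarrow{g} B$, so surjectivity on homs is immediate. Injectivity follows because two such normal forms represent the same span exactly when there is a bijection $\sigma$ between their apices intertwining the legs, and --- since objects of the prop are natural numbers --- such $\sigma$ is a permutation, which the distributive law over the shared sub-prop $\P$ of permutations already absorbs. The cospan case runs dually: normal forms $f ; g^\op$ correspond to cospans $A \xrightarrow{f} X \xleftarrow{g} B$, and the Frobenius equations encode pushout composition in $\FSets$.

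The main technical obstacle is verifying that the stated equations genuinely define a distributive law, namely that (\ref{bi.one})--(\ref{extra}) allow any $\cm$-then-$\cm^\op$ subword to be rewritten as a $\cm^\op$-then-$\cm$ word in a way consistent with taking pullbacks in $\FSets$, and dually for the Frobenius case. Confluence and termination of this rewriting --- so that the normal-form procedure is well-defined --- is the combinatorial heart of Lack's original argument, and once established the lemma follows by the distributive law lemma above.
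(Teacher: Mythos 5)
The paper offers no proof of its own here---it simply cites Lack (\S 5.3, 5.4)---and your sketch follows essentially that argument: interpret the generators as the basic (co)spans, use the distributive-law normal form $f^\op;g$ to get fullness, and note that tensoring over the shared permutation prop $\P$ identifies normal forms exactly up to a bijection of apices, giving faithfulness. One caveat: verifying that pullback (resp.\ pushout) genuinely yields a distributive law is not, in Lack's treatment, a confluence-and-termination argument for a rewriting system; it follows from the pasting law for pullbacks/pushouts together with the fact that every map of $\FSets$ decomposes into tensors and composites of the generators, so the step you defer as ``the combinatorial heart'' is handled by universal properties rather than rewriting theory.
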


The equations generating these distributive laws give us a recipe for how to generate all pushouts and pullbacks along epics and monics.
This category of spans can be seen from a slightly different perspective:

\begin{lemma}
There is an equivalence of props $\cb\cong(\Span(\FinOrd),+)\cong (\Mat(\N),+)$.
\end{lemma}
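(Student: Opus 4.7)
The plan is to handle the two equivalences separately. The first, $\cb \cong (\Span(\FinOrd),+)$, is essentially the previous lemma restated: since $\FinOrd$ is a skeleton of $\FSets$ (its objects are the finite ordinals $[n] = \{1,\dots,n\}$), the canonical inclusion induces an equivalence of symmetric monoidal categories $(\Span(\FinOrd),+) \simeq (\Span(\FSets),+)$, and both are strict as props (with $\N$ as the object monoid). No new work is required beyond invoking the previous lemma and observing that the span construction sends equivalences to equivalences.

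For the second equivalence $(\Span(\FinOrd),+) \cong (\Mat(\N),+)$, I would define a strict symmetric monoidal functor $\Phi : \Span(\FinOrd) \to \Mat(\N)$ which is the identity on objects, and which sends a span $[n] \xleftarrow{f} [k] \xrightarrow{g} [m]$ to the $m \times n$ matrix $M$ with entries $M_{ij} := |f^{-1}(j) \cap g^{-1}(i)|$. One checks in turn that:
\begin{enumerate}
\item identity spans are sent to identity matrices (trivially);
\item $\Phi$ is strict monoidal, since the disjoint-union monoidal structure on spans corresponds precisely to the block-diagonal direct sum of matrices;
\item composition of spans by pullback corresponds to matrix multiplication: given spans $[n] \leftarrow [k] \to [m]$ and $[m] \leftarrow [l] \to [p]$, an element of the pullback over $[m]$ is a pair $(x,y) \in [k] \times [l]$ with $g(x) = f'(y)$, and partitioning such pairs according to their images in $[n] \times [p]$ gives exactly the sum $\sum_{t \in [m]} M_{tj} \cdot N_{it}$, which is the $(i,j)$-entry of $NM$.
\end{enumerate}

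To show $\Phi$ is an equivalence of props (and hence an isomorphism, since both sides are strict and identity on objects), I would build an inverse: a matrix $M \in \Mat(\N)(n,m)$ determines a canonical span whose apex is $\coprod_{i,j} [M_{ij}]$, with legs sending the $(i,j)$-copy to $j \in [n]$ and $i \in [m]$ respectively. This assignment is manifestly inverse to $\Phi$ up to span-isomorphism, and two spans yield the same matrix under $\Phi$ iff the middle objects are related by a bijection commuting with both legs, i.e., iff the spans are isomorphic; so $\Phi$ is faithful and essentially surjective on morphisms.

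The main obstacle is clause (iii) above: verifying that the pullback composition of spans in $\FinOrd$ computes matrix multiplication. This requires carefully unpacking the universal property of the pullback for functions of finite sets, observing that the pullback decomposes as $\coprod_{t \in [m]}\bigl(f^{-1}(t) \times (f')^{-1}(t)\bigr)$, and then summing cardinalities indexed by the outer endpoints. Once this bookkeeping is done, the monoidal and functorial axioms fall out immediately, and the inverse construction finishes the proof.
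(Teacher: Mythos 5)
Your proposal is correct, but it supplies considerably more than the paper does: the paper gives no proof of this lemma at all. The first equivalence is taken to be the presentation result quoted immediately above (Lack's $\cb\cong(\Span(\FSets),+)$), and the second is declared to be folklore, with only a pointer to a similar result in the literature and a single illustrative example showing how a diagram of $\cm^\op\otimes_\P\cm$ is read as a matrix, interpreting the black monoid as addition/zero and the comonoid as copy/delete. So the paper's implicit route is syntactic, via the presentation $\cb$ and normal forms of string diagrams, whereas your route is a self-contained semantic verification: the fibre-counting functor $\Span(\FinOrd)\to\Mat(\N)$, the check that pullback composition computes matrix multiplication, and the explicit quasi-inverse sending $M$ to the span with apex $\coprod_{i,j}[M_{ij}]$, together with the observation that two spans have the same matrix exactly when they are isomorphic. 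Your version buys a complete argument where the paper relies on the reader's familiarity; the paper's sketch buys brevity and keeps the emphasis on the diagrammatic reading that is reused later. One trivial slip worth fixing: in your closing paragraph the pullback over $[m]$ decomposes as $\coprod_{t\in[m]}\bigl(g^{-1}(t)\times (f')^{-1}(t)\bigr)$, not with $f^{-1}(t)$; your clause (iii) has it right, so this is only a notational typo.
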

Although we are quite sure that the second equivalence is also folklore, a similar result is given in \cite{bruni}.

One way to see this is by interpreting the monoid as addition and 0 and the comonoid as copying/deleting.  For example, consider the interpretation of the following diagram in $\cm^\op \otimes_\P \cm$ as a matrix:
\hfil
$
\left\llbracket
\begin{tikzpicture}
	\begin{pgfonlayer}{nodelayer}
		\node [style=none] (0) at (-0.5, -1) {};
		\node [style=none] (1) at (0.75, -1) {};
		\node [style=none] (2) at (1.5, -1) {};
		\node [style=Z] (4) at (0.75, -0.5) {};
		\node [style=Z] (5) at (1.5, -0.5) {};
		\node [style=Z] (6) at (0.25, 0.25) {};
		\node [style=X] (7) at (1, 1) {};
		\node [style=X] (8) at (-0.5, 1) {};
		\node [style=none] (9) at (1, 1.5) {};
		\node [style=none] (10) at (-0.5, 1.5) {};
		\node [style=X] (14) at (0.25, 1) {};
		\node [style=none] (15) at (0.25, 1.5) {};
	\end{pgfonlayer}
	\begin{pgfonlayer}{edgelayer}
		\draw [in=-30, out=150] (6) to (8);
		\draw [in=135, out=-90] (6) to (4);
		\draw [bend right, looseness=1.25] (4) to (7);
		\draw [in=-150, out=30] (6) to (7);
		\draw [in=-135, out=90, looseness=0.75] (0.center) to (8);
		\draw (4) to (1.center);
		\draw (2.center) to (5);
		\draw (8) to (10.center);
		\draw (7) to (9.center);
		\draw (14) to (15.center);
	\end{pgfonlayer}
\end{tikzpicture}
\right\rrbracket
=
\begin{bmatrix}
  1 & 1 & 0\\
  0 & 0 & 0\\
  0 & 2 & 0 
\end{bmatrix}
$

\begin{definition}
Let $\cb_2$ denote the quotient of $\cb_2$ by the equation:
\hspace*{1cm}
$
\begin{tikzpicture}
	\begin{pgfonlayer}{nodelayer}
		\node [style=Z] (0) at (0, 0) {};
		\node [style=none] (1) at (0, -0.5) {};
		\node [style=X] (2) at (0, 0.75) {};
		\node [style=none] (3) at (0, 1.25) {};
	\end{pgfonlayer}
	\begin{pgfonlayer}{edgelayer}
		\draw (3.center) to (2);
		\draw (0) to (1.center);
		\draw [bend left=45, looseness=1.25] (0) to (2);
		\draw [bend left=45, looseness=1.25] (2) to (0);
	\end{pgfonlayer}
\end{tikzpicture}
\eq{hopf}
\begin{tikzpicture}
	\begin{pgfonlayer}{nodelayer}
		\node [style=Z] (0) at (0, 0) {};
		\node [style=none] (1) at (0, -0.5) {};
		\node [style=X] (2) at (0, 0.75) {};
		\node [style=none] (3) at (0, 1.25) {};
	\end{pgfonlayer}
	\begin{pgfonlayer}{edgelayer}
		\draw (3.center) to (2);
		\draw (0) to (1.center);
	\end{pgfonlayer}
\end{tikzpicture}
$
\end{definition}

\begin{lemma}
$\cb_2$ is a presentation for the prop $(\Mat(\F_2),+)$.
\end{lemma}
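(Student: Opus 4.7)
The plan is to leverage the equivalence $\cb \cong (\Mat(\N), +)$ from the preceding lemma together with the semiring quotient $\N \twoheadrightarrow \F_2$. The composite prop morphism $\cb \cong (\Mat(\N), +) \to (\Mat(\F_2), +)$ sends the left-hand side of (\ref{hopf}) to the $1 \times 1$ matrix $[2]$ (the comultiplication $\begin{pmatrix}1\\1\end{pmatrix}$ composed with the multiplication $\begin{pmatrix}1 & 1\end{pmatrix}$) and the right-hand side to $[0]$ (the counit-then-unit scalar factors through the zero object); these agree modulo $2$. Hence the composite descends to a well-defined prop morphism $G \colon \cb_2 \to (\Mat(\F_2), +)$, and the remaining task is to show $G$ is an isomorphism.

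Fullness of $G$ is immediate, since every $\F_2$-matrix lifts to an $\N$-matrix, and the latter is realized by some diagram in $\cb$ by the preceding lemma. For faithfulness, I would establish a bipartite normal form. The bicommutative bialgebra structure of $\cb$, arising from the distributive law $\cm^\op \otimes_\P \cm$, brings any morphism $n \to m$ into the form consisting of a layer of comultiplications expanding each input into several wires, then a symmetry, then a layer of multiplications collecting the wires into each output. The $(i,j)$-entry of the associated $\N$-matrix then counts the wires running from input $j$ to output $i$. Equation (\ref{hopf}), applied locally to two parallel wires sharing a common copy below and a common merge above, deletes them and thereby decreases the wire count by $2$. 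Iterating, any two bipartite diagrams whose matrices agree modulo $2$ are identified in $\cb_2$, yielding injectivity of $G$ on hom-sets.

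The main obstacle is the local isolation step: one must verify that any pair of parallel wires between the same comultiplication and multiplication in the bipartite form can be reorganized, using only the (co)commutativity, (co)associativity, and bialgebra equations of $\cb$, into the exact bigon configuration on the left of (\ref{hopf}). This is where cocommutativity of the copy and commutativity of the merge become essential: they allow wires on either side of the bipartite layer to be freely permuted, so any pair sharing endpoints can be brought into adjacent position and then enclosed by a single copy below and a single merge above. Once this is in hand, the bipartite normal form together with the modulo-$2$ reduction exhibits a bijection between morphisms of $\cb_2$ and $\F_2$-matrices, so $G$ is an isomorphism of props and $\cb_2 \cong (\Mat(\F_2),+)$.
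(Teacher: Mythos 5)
Your proof is correct and rests on the same idea as the paper's: the paper's entire proof is the observation that $\F_2 \cong \Z/2\Z$ is presented by the single relation $2 \equiv 0$, which is exactly what equation (\ref{hopf}) imposes on $\cb \cong (\Mat(\N),+)$. Your construction of the induced functor, the bipartite normal form, and the local isolation of a parallel pair of wires are just the explicit verification of that correspondence, which the paper leaves implicit, and they go through as you describe (with the minor remark that the normal form also needs the counit/unit layers so that zero entries are represented).
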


\begin{proof}
As an Abelian group $\F_2 \cong \Z/2\Z$; which is generated by the equation $2 \equiv 0$, corresponding to this quotient of $\cb$.
\end{proof}

\section{The phase-free fragment}
\label{sec:one}

In this section we build up to giving a presentation for $(\Span(\Mat(\F_2)),+)$ in a modular way. This category is shown to be the same as the phase-free Hadamard free fragment of the ZX-calculus. Although this presentation of linear spans has already been discussed in great detail  for arbitrary PIDs \cite{zanasi}, our particular method of exposition is necessary to motivate the affine and full cases.

\begin{definition}
Consider the prop $\Iso(\cb_2)$ generated by the controlled not gate:
\begin{align*}
&\left\llbracket
\begin{tikzpicture}
	\begin{pgfonlayer}{nodelayer}
		\node [style=oplus] (5) at (0.5, 2.75) {};
		\node [style=dot] (6) at (0, 2.75) {};
		\node [style=none] (7) at (0.5, 3.5) {};
		\node [style=none] (8) at (0.5, 2) {};
		\node [style=none] (9) at (0, 2) {};
		\node [style=none] (10) at (0, 3.5) {};
	\end{pgfonlayer}
	\begin{pgfonlayer}{edgelayer}
		\draw (8.center) to (5);
		\draw (5) to (7.center);
		\draw (10.center) to (6);
		\draw (6) to (5);
		\draw (6) to (9.center);
	\end{pgfonlayer}
\end{tikzpicture}
\right\rrbracket
=
\begin{tikzpicture}
	\begin{pgfonlayer}{nodelayer}
		\node [style=X] (0) at (-0.25, -1) {};
		\node [style=Z] (1) at (-0.75, -1.75) {};
		\node [style=none] (2) at (0, -2.25) {};
		\node [style=none] (3) at (-0.25, -0.5) {};
		\node [style=none] (4) at (-1, -0.5) {};
		\node [style=none] (5) at (-0.75, -2.25) {};
	\end{pgfonlayer}
	\begin{pgfonlayer}{edgelayer}
		\draw (3.center) to (0);
		\draw [in=90, out=-75] (0) to (2.center);
		\draw (5.center) to (1);
		\draw (1) to (0);
		\draw [in=-90, out=105] (1) to (4.center);
	\end{pgfonlayer}
\end{tikzpicture} 
\hspace*{1cm}
\text{modulo the following relations:}\\
\begin{tikzpicture}
	\begin{pgfonlayer}{nodelayer}
		\node [style=dot] (0) at (7, 0) {};
		\node [style=oplus] (1) at (7.5, 0) {};
		\node [style=oplus] (2) at (7.5, 0.5) {};
		\node [style=dot] (3) at (7, 0.5) {};
		\node [style=none] (4) at (7.5, 1) {};
		\node [style=none] (5) at (7, 1) {};
		\node [style=none] (6) at (7, -0.5) {};
		\node [style=none] (7) at (7.5, -0.5) {};
	\end{pgfonlayer}
	\begin{pgfonlayer}{edgelayer}
		\draw (5.center) to (3);
		\draw (3) to (0);
		\draw (0) to (6.center);
		\draw (7.center) to (1);
		\draw (1) to (2);
		\draw (2) to (4.center);
		\draw (2) to (3);
		\draw (0) to (1);
	\end{pgfonlayer}
\end{tikzpicture}
\eq{cnot.one}
\begin{tikzpicture}
	\begin{pgfonlayer}{nodelayer}
		\node [style=none] (4) at (7.5, 1) {};
		\node [style=none] (5) at (7, 1) {};
		\node [style=none] (6) at (7, -0.5) {};
		\node [style=none] (7) at (7.5, -0.5) {};
	\end{pgfonlayer}
	\begin{pgfonlayer}{edgelayer}
		\draw (7.center) to (4.center);
		\draw (5.center) to (6.center);
	\end{pgfonlayer}
\end{tikzpicture}
&\hspace*{.5cm}
\begin{tikzpicture}
	\begin{pgfonlayer}{nodelayer}
		\node [style=none] (4) at (7.5, 1) {};
		\node [style=none] (5) at (7, 1) {};
		\node [style=none] (6) at (7, -1) {};
		\node [style=none] (7) at (7.5, -1) {};
		\node [style=dot] (8) at (7.5, 0.5) {};
		\node [style=dot] (9) at (7.5, -0.5) {};
		\node [style=dot] (10) at (7, 0) {};
		\node [style=oplus] (11) at (7.5, 0) {};
		\node [style=oplus] (12) at (7, 0.5) {};
		\node [style=oplus] (13) at (7, -0.5) {};
	\end{pgfonlayer}
	\begin{pgfonlayer}{edgelayer}
		\draw (7.center) to (4.center);
		\draw (5.center) to (6.center);
		\draw (8) to (12);
		\draw (10) to (11);
		\draw (9) to (13);
	\end{pgfonlayer}
\end{tikzpicture}
\eq{cnot.two}
\begin{tikzpicture}
	\begin{pgfonlayer}{nodelayer}
		\node [style=none] (4) at (7, 1) {};
		\node [style=none] (5) at (7.5, 1) {};
		\node [style=none] (6) at (7, -1) {};
		\node [style=none] (7) at (7.5, -1) {};
	\end{pgfonlayer}
	\begin{pgfonlayer}{edgelayer}
		\draw [in=270, out=90] (7.center) to (4.center);
		\draw [in=90, out=-90] (5.center) to (6.center);
	\end{pgfonlayer}
\end{tikzpicture}
\hspace*{.5cm}
\begin{tikzpicture}
	\begin{pgfonlayer}{nodelayer}
		\node [style=dot] (0) at (8, 0) {};
		\node [style=dot] (1) at (8.5, 0.5) {};
		\node [style=dot] (2) at (8.5, -0.5) {};
		\node [style=oplus] (3) at (8.5, 0) {};
		\node [style=oplus] (4) at (9, 0.5) {};
		\node [style=oplus] (5) at (9, -0.5) {};
		\node [style=none] (6) at (9, -1) {};
		\node [style=none] (7) at (8.5, -1) {};
		\node [style=none] (8) at (8, -1) {};
		\node [style=none] (9) at (8, 1) {};
		\node [style=none] (10) at (8.5, 1) {};
		\node [style=none] (11) at (9, 1) {};
	\end{pgfonlayer}
	\begin{pgfonlayer}{edgelayer}
		\draw (9.center) to (8.center);
		\draw (7.center) to (10.center);
		\draw (11.center) to (6.center);
		\draw (5) to (2);
		\draw (3) to (0);
		\draw (1) to (4);
	\end{pgfonlayer}
\end{tikzpicture}
\eq{cnot.three}
\begin{tikzpicture}
	\begin{pgfonlayer}{nodelayer}
		\node [style=dot] (0) at (8, -0.25) {};
		\node [style=dot] (1) at (8, 0.25) {};
		\node [style=oplus] (3) at (8.5, -0.25) {};
		\node [style=oplus] (4) at (9, 0.25) {};
		\node [style=none] (6) at (9, -1) {};
		\node [style=none] (7) at (8.5, -1) {};
		\node [style=none] (8) at (8, -1) {};
		\node [style=none] (9) at (8, 1) {};
		\node [style=none] (10) at (8.5, 1) {};
		\node [style=none] (11) at (9, 1) {};
	\end{pgfonlayer}
	\begin{pgfonlayer}{edgelayer}
		\draw (9.center) to (8.center);
		\draw (7.center) to (10.center);
		\draw (11.center) to (6.center);
		\draw (3) to (0);
		\draw (1) to (4);
	\end{pgfonlayer}
\end{tikzpicture}
\hspace*{.5cm}
\begin{tikzpicture}
	\begin{pgfonlayer}{nodelayer}
		\node [style=dot] (0) at (2, 1.5) {};
		\node [style=oplus] (1) at (2.5, 1.5) {};
		\node [style=none] (3) at (2, 1) {};
		\node [style=none] (4) at (2.5, 1) {};
		\node [style=none] (5) at (2, 2.5) {};
		\node [style=none] (6) at (2.5, 2.5) {};
		\node [style=none] (7) at (3, 2.5) {};
		\node [style=none] (8) at (3, 1) {};
		\node [style=dot] (9) at (3, 2) {};
		\node [style=oplus] (10) at (2.5, 2) {};
	\end{pgfonlayer}
	\begin{pgfonlayer}{edgelayer}
		\draw (0) to (5.center);
		\draw (6.center) to (1);
		\draw (1) to (0);
		\draw (4.center) to (1);
		\draw (3.center) to (0);
		\draw (10) to (9);
		\draw (8.center) to (7.center);
	\end{pgfonlayer}
\end{tikzpicture}
\eq{cnot.four}
\begin{tikzpicture}
	\begin{pgfonlayer}{nodelayer}
		\node [style=dot] (0) at (2, 2) {};
		\node [style=oplus] (1) at (2.5, 2) {};
		\node [style=none] (3) at (2, 2.5) {};
		\node [style=none] (4) at (2.5, 2.5) {};
		\node [style=none] (5) at (2, 1) {};
		\node [style=none] (6) at (2.5, 1) {};
		\node [style=none] (7) at (3, 1) {};
		\node [style=none] (8) at (3, 2.5) {};
		\node [style=dot] (9) at (3, 1.5) {};
		\node [style=oplus] (10) at (2.5, 1.5) {};
	\end{pgfonlayer}
	\begin{pgfonlayer}{edgelayer}
		\draw (0) to (5.center);
		\draw (6.center) to (1);
		\draw (1) to (0);
		\draw (4.center) to (1);
		\draw (3.center) to (0);
		\draw (10) to (9);
		\draw (8.center) to (7.center);
	\end{pgfonlayer}
\end{tikzpicture}
\hspace*{.5cm}
\begin{tikzpicture}
	\begin{pgfonlayer}{nodelayer}
		\node [style=oplus] (0) at (2, 1.5) {};
		\node [style=dot] (1) at (2.5, 1.5) {};
		\node [style=none] (3) at (2, 1) {};
		\node [style=none] (4) at (2.5, 1) {};
		\node [style=none] (5) at (2, 2.5) {};
		\node [style=none] (6) at (2.5, 2.5) {};
		\node [style=none] (7) at (3, 2.5) {};
		\node [style=none] (8) at (3, 1) {};
		\node [style=oplus] (9) at (3, 2) {};
		\node [style=dot] (10) at (2.5, 2) {};
	\end{pgfonlayer}
	\begin{pgfonlayer}{edgelayer}
		\draw (0) to (5.center);
		\draw (6.center) to (1);
		\draw (1) to (0);
		\draw (4.center) to (1);
		\draw (3.center) to (0);
		\draw (10) to (9);
		\draw (8.center) to (7.center);
	\end{pgfonlayer}
\end{tikzpicture}
\eq{cnot.five}
\begin{tikzpicture}
	\begin{pgfonlayer}{nodelayer}
		\node [style=oplus] (0) at (2, 2) {};
		\node [style=dot] (1) at (2.5, 2) {};
		\node [style=none] (3) at (2, 2.5) {};
		\node [style=none] (4) at (2.5, 2.5) {};
		\node [style=none] (5) at (2, 1) {};
		\node [style=none] (6) at (2.5, 1) {};
		\node [style=none] (7) at (3, 1) {};
		\node [style=none] (8) at (3, 2.5) {};
		\node [style=oplus] (9) at (3, 1.5) {};
		\node [style=dot] (10) at (2.5, 1.5) {};
	\end{pgfonlayer}
	\begin{pgfonlayer}{edgelayer}
		\draw (0) to (5.center);
		\draw (6.center) to (1);
		\draw (1) to (0);
		\draw (4.center) to (1);
		\draw (3.center) to (0);
		\draw (10) to (9);
		\draw (8.center) to (7.center);
	\end{pgfonlayer}
\end{tikzpicture}
\end{align*}
\end{definition}

\begin{lemma} \cite[Thm. 6]{lafont}
$\Iso(\cb_2)$ is a presentation for the prop $(\Iso(\Mat(\F_2),+))$
\end{lemma}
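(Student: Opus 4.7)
The plan is to exhibit a prop isomorphism between $\Iso(\cb_2)$ and the prop $(\Iso(\Mat(\F_2)),+)$ of invertible matrices over $\F_2$ under direct sum. The obvious candidate is the functor $\llbracket - \rrbracket$ whose action on the single nontrivial generator sends the CNOT string diagram to the invertible elementary matrix $\bigl(\begin{smallmatrix} 1 & 0 \\ 1 & 1 \end{smallmatrix}\bigr)$ (the row operation ``add control to target''), with the symmetry sent to the swap matrix. I would first check soundness by direct matrix computation: equations~(\ref{cnot.one}) and (\ref{cnot.two}) witness that CNOT is self-inverse, (\ref{cnot.three}) is the standard three-CNOT decomposition of a swap, and (\ref{cnot.four}),~(\ref{cnot.five}) are the commuting-CNOT identities when the two gates share only a common control or only a common target. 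Each of these is a routine equality of $2\times 2$ or $3\times 3$ elementary matrix products, so soundness is immediate.

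For completeness I would reduce the problem to a Gaussian-elimination style normal form. The key linear algebra fact is that $GL_n(\F_2)$ is generated by elementary ``add row $i$ to row $j$'' operations, i.e.\ by CNOTs, together with coordinate permutations; concretely, every $M \in GL_n(\F_2)$ admits a factorisation $M = P L U$ with $P$ a permutation, $L$ lower unitriangular, and $U$ upper unitriangular. This gives surjectivity of $\llbracket - \rrbracket$ at the level of hom-sets, and also suggests a normal form for diagrams: every morphism of $\Iso(\cb_2)$ should be rewritable to the shape (permutation) $\circ$ (ladder of upward-pointing CNOTs) $\circ$ (ladder of downward-pointing CNOTs), which encodes $PLU$ directly.

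To show the equations suffice, I would argue that using (\ref{cnot.two}) one can eliminate all swaps in favour of CNOT triples, and then using the ``CNOT $=$ CNOT'' pushing lemmas (\ref{cnot.four}),~(\ref{cnot.five}) together with the commutation of disjoint CNOTs (a consequence of interchange), one can percolate all target-down CNOTs to the bottom past the target-up CNOTs, introducing further CNOTs as dictated by the elementary matrix identity $E_{ij}(1) E_{kl}(1) = E_{kl}(1) E_{ij}(1) E_{il}(1)$ when $j=k$; (\ref{cnot.one}) then cancels cancelling pairs. What remains is a normal form whose bottom and top ladders are uniquely determined by $L$ and $U$, while the permutation at the end is put in canonical form using the presentation of the symmetric group inside the free symmetric prop. Two normal forms that denote equal matrices must then have identical $PLU$ data and so are literally equal as diagrams, giving faithfulness.

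The main obstacle will be the confluence argument in paragraph three: verifying carefully that the CNOT commutation and self-cancellation rules really do suffice to push an arbitrary CNOT circuit into the $PLU$ normal form without leaving residual terms that cannot be simplified. This is precisely the content of Lafont's original completeness proof in~\cite{lafont} and is what makes the lemma nontrivial; once it is in place the rest of the argument — soundness and the surjectivity afforded by Gaussian elimination — is essentially bookkeeping.
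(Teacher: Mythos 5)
The paper does not actually prove this lemma: it is imported wholesale as a citation of Lafont's Theorem~6, so there is no in-paper argument to measure your sketch against, and your outline (soundness by matrix computation, completeness by a Gaussian-elimination normal form, with the confluence work attributed to Lafont) is indeed a faithful reconstruction of the strategy the cited result rests on. Two points deserve correction, though. First, you have misread the relation list: (\ref{cnot.one}) alone is self-inverseness, (\ref{cnot.two}) is the three-CNOT swap decomposition, and (\ref{cnot.three}) is not a swap law but precisely the commutator-type identity $\mathsf{cnot}_{2,3};\mathsf{cnot}_{1,2};\mathsf{cnot}_{2,3}=\mathsf{cnot}_{1,2};\mathsf{cnot}_{1,3}$, i.e.\ the diagrammatic form of the elementary-matrix identity $E_{ij}(1)E_{jl}(1)=E_{jl}(1)E_{ij}(1)E_{il}(1)$ that your percolation step silently invokes; without recognising that this is one of the stated gener| relations your rewriting argument would have no licence to introduce the extra CNOTs. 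Second, your faithfulness step claims that equal matrices force \emph{identical} $PLU$ data, but $PLU$ factorisations are not unique (only the Bruhat permutation is canonically determined; the triangular factors depend on choices), so ``same matrix $\Rightarrow$ same normal form'' needs a deterministic elimination procedure or a canonically constrained decomposition. That is exactly the confluence content you correctly flag as the hard part and defer to Lafont, so the proposal is acceptable as a sketch, but as written the uniqueness assertion is a genuine gap rather than bookkeeping.
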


\begin{definition}
Consider the prop $\inj(\cb_2)$ generated by the coproduct of props $\Iso(\cb_2)+\inj$ modulo the equation:
\hspace*{1cm}
$
\begin{tikzpicture}
	\begin{pgfonlayer}{nodelayer}
		\node [style=X] (0) at (0, 0) {};
		\node [style=dot] (1) at (0, 0.5) {};
		\node [style=oplus] (2) at (0.5, 0.5) {};
		\node [style=none] (3) at (0.5, -0.25) {};
		\node [style=none] (4) at (0.5, 1) {};
		\node [style=none] (5) at (0, 1) {};
	\end{pgfonlayer}
	\begin{pgfonlayer}{edgelayer}
		\draw (0) to (1);
		\draw (1) to (5.center);
		\draw (1) to (2);
		\draw (2) to (4.center);
		\draw (3.center) to (2);
	\end{pgfonlayer}
\end{tikzpicture}
\eq{cnot.six}
\begin{tikzpicture}
	\begin{pgfonlayer}{nodelayer}
		\node [style=X] (0) at (0, 0) {};
		\node [style=none] (3) at (0.5, -0.25) {};
		\node [style=none] (4) at (0.5, 0.5) {};
		\node [style=none] (5) at (0, 0.5) {};
	\end{pgfonlayer}
	\begin{pgfonlayer}{edgelayer}
		\draw (0) to (5.center);
		\draw (3.center) to (4.center);
	\end{pgfonlayer}
\end{tikzpicture}
$

\end{definition}

\begin{lemma} \cite[Thm. 7]{lafont}
$\inj(\cb_2)$ is a presentation for the prop $(\inj(\Mat(\F_2)),+)$
\end{lemma}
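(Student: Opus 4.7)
The plan is to exhibit a prop isomorphism $\llbracket - \rrbracket : \inj(\cb_2) \xrightarrow{\sim} (\inj(\Mat(\F_2)), +)$ that extends Lafont's isomorphism for $\Iso(\cb_2)$ by sending the $\inj$-generator $\bullet$ to the inclusion $\F_2^0 \hookrightarrow \F_2$ (a zero column). Soundness on the new relation (cnot.six) is immediate: a $\cnot$ whose control is set to zero acts as the identity on the target. For fullness, any $\F_2$-linear injection $f : \F_2^m \hookrightarrow \F_2^{m+k}$ factors as $f = \llbracket \alpha \rrbracket \circ \iota$, where $\iota := \mathrm{id}_m \oplus \bullet^{\oplus k}$ is the standard zero-extension and $\alpha$ is the invertible matrix obtained by extending a basis of $\mathrm{im}(f)$ (the columns of $f$) to a basis of $\F_2^{m+k}$.

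For faithfulness, I would first prove a syntactic normal form: every morphism of $\inj(\cb_2)$ equals $\alpha \circ \iota$ for some $\alpha$ in $\Iso(\cb_2)$. This reduction uses only the symmetric monoidal structure: because $\bullet : 0 \to 1$ has no inputs, the interchange law and naturality of the symmetry let every occurrence of $\bullet$ be slid to the bottom of the diagram, leaving a pure $\Iso(\cb_2)$-morphism on top. Lafont's theorem then identifies this top iso with a unique element of $\Iso(\Mat(\F_2))$ of the appropriate dimension.

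It remains to show that two such normal forms $\alpha \circ \iota$ and $\beta \circ \iota$ denoting the same injection coincide in $\inj(\cb_2)$. Semantically, $\llbracket \alpha \rrbracket \iota = \llbracket \beta \rrbracket \iota$ forces $\gamma := \alpha^{-1} \beta$ to fix $\iota$, hence to be block upper-triangular with identity top-left block: $\gamma = \begin{pmatrix} I & Q \\ 0 & S \end{pmatrix}$. Decomposing $\gamma = \gamma_2 \gamma_1$ with $\gamma_1 = \begin{pmatrix} I & Q \\ 0 & I \end{pmatrix}$ (a product of $\cnot$s whose controls lie in the last $k$ wires and targets in the first $m$) and $\gamma_2 = \begin{pmatrix} I & 0 \\ 0 & S \end{pmatrix}$ (an iso acting only on the last $k$ wires), each factor, after precomposition with $\iota$, acts solely on wires carrying $\bullet$: by (cnot.six) every such $\cnot$ is absorbed, and by naturality of the symmetry every swap of $\bullet$-wires is absorbed. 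Hence $\gamma \circ \iota = \iota$ in $\inj(\cb_2)$, whence $\beta \circ \iota = \alpha \circ \iota$. The main obstacle is rigorously carrying out this absorption of $\gamma$ into $\iota$, which amounts to an induction on a chosen $\Iso(\cb_2)$-decomposition of $\gamma$ and relies on Lafont's theorem to transport the linear-algebraic block decomposition into a syntactic one whose components are all of the shape absorbable by (cnot.six).
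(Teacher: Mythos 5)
Your proposal is correct, but it is worth saying up front that the paper does not prove this lemma at all: it is quoted directly from Lafont (Thm.~7), just as the iso case is quoted as Thm.~6. What you have written is therefore a self-contained reconstruction that reduces the statement to the iso-fragment completeness result, and the reduction is sound. The normal form $\alpha\circ\iota$ (slide every $\bullet$ to the bottom-right using interchange and symmetry, forcing the codomain to be $n+k$), fullness by extending the columns of an injection to a basis, and faithfulness by computing the ``stabilizer'' of $\iota$ all go through: if $\llbracket\gamma\rrbracket$ fixes the standard inclusion then it has the block form $\begin{pmatrix} I & Q \\ 0 & S\end{pmatrix}$ with $S$ invertible, and Lafont's completeness for $\Iso(\cb_2)$ lets you replace the syntactic $\gamma=\alpha^{-1}\beta$ by a chosen circuit $\gamma_2\gamma_1$ realizing that block decomposition, after which relation~(\ref{cnot.six}) absorbs everything into $\iota$. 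The one step you flag as an obstacle is in fact routine, with a small caveat: the cnots occurring inside $\gamma_2$ have \emph{both} legs on $\bullet$-wires, and these are not absorbed by naturality of the symmetry alone; one writes $\bullet\otimes\bullet=(\bullet\otimes\mathrm{id})\circ\bullet$ and applies~(\ref{cnot.six}) to the control, so that a cnot applied to two $\bullet$-wires is killed regardless of its target, and then induction over a cnot decomposition of $S$ (which exists since every invertible matrix over $\F_2$ is a product of transvections) finishes the argument. Compared with the paper's bare citation, your argument buys an explicit explanation of why the single relation~(\ref{cnot.six}) suffices, and it is methodologically close to the factorization argument the paper itself uses later for $\inj(\Aff\cb_2)$ (Lemma~\ref{lem:injaffcb}), where affine injections are uniquely factored as a linear injection followed by an isomorphism; so your route fits naturally into the paper's own modular style even though the paper chose to import this particular step wholesale from Lafont.
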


The white comultiplication can be derived in this fragment:
\hspace*{1cm}$
\left\llbracket
\begin{tikzpicture}
	\begin{pgfonlayer}{nodelayer}
		\node [style=oplus] (0) at (1, 2.75) {};
		\node [style=dot] (1) at (0.5, 2.75) {};
		\node [style=none] (2) at (1, 3.5) {};
		\node [style=none] (3) at (1, 2.25) {};
		\node [style=none] (4) at (0.5, 2) {};
		\node [style=none] (5) at (0.5, 3.5) {};
		\node [style=X] (6) at (1, 2.25) {};
	\end{pgfonlayer}
	\begin{pgfonlayer}{edgelayer}
		\draw (3.center) to (0);
		\draw (0) to (2.center);
		\draw (5.center) to (1);
		\draw (1) to (0);
		\draw (1) to (4.center);
	\end{pgfonlayer}
\end{tikzpicture}
\right\rrbracket
=
\begin{tikzpicture}
	\begin{pgfonlayer}{nodelayer}
		\node [style=X] (0) at (1.25, -1) {};
		\node [style=Z] (1) at (0.75, -1.75) {};
		\node [style=none] (2) at (1.5, -2) {};
		\node [style=none] (3) at (1.25, -0.5) {};
		\node [style=none] (4) at (0.5, -0.5) {};
		\node [style=none] (5) at (0.75, -2.25) {};
		\node [style=X] (6) at (1.5, -2) {};
	\end{pgfonlayer}
	\begin{pgfonlayer}{edgelayer}
		\draw (3.center) to (0);
		\draw [in=90, out=-75] (0) to (2.center);
		\draw (5.center) to (1);
		\draw (1) to (0);
		\draw [in=-90, out=105] (1) to (4.center);
	\end{pgfonlayer}
\end{tikzpicture}
=
\begin{tikzpicture}
	\begin{pgfonlayer}{nodelayer}
		\node [style=Z] (1) at (0.75, -1.75) {};
		\node [style=none] (3) at (1, -1) {};
		\node [style=none] (4) at (0.5, -1) {};
		\node [style=none] (5) at (0.75, -2.25) {};
	\end{pgfonlayer}
	\begin{pgfonlayer}{edgelayer}
		\draw (5.center) to (1);
		\draw [in=-90, out=120] (1) to (4.center);
		\draw [in=-90, out=60] (1) to (3.center);
	\end{pgfonlayer}
\end{tikzpicture}
$

%This is to be expected because there is a faithful ``'graph functor' monoidal functor $(\inj(\FinSet),+) \to (\Inj(\Span(\FinSet)),+)$.

As a matter of notation, given a category $\X$ with finite limits, we refer to the subcategory of $\Span(\X)$ where the left leg is monic as $\Par(\X)$, and the subcategory of spans where all legs are monic by $\Par\Iso(\X)$.  These two categories, respectively, give semantics for partial maps and partially invertible maps in $\X$ (see \cite{cockett} for more details).

\begin{definition}
\label{def:pariso:cb}
Consider the prop $\ParIso(\cb_2)$ generated by the distributive law of props:
$$
\inj(\cb_2)^\op \otimes_{\Iso(\cb_2)} \inj(\cb_2);
\begin{tikzpicture}
	\begin{pgfonlayer}{nodelayer}
		\node [style=X] (0) at (0, 0) {};
		\node [style=X] (1) at (0, 0.75) {};
	\end{pgfonlayer}
	\begin{pgfonlayer}{edgelayer}
		\draw (0) to (1);
	\end{pgfonlayer}
\end{tikzpicture}
\eref{extra}
$$
\end{definition}

To see that this is actually a distributive law, see Remark \ref{rem:pariso:cb}.

\begin{lemma}
\label{lem:parisocb}
$\Par\Iso(\cb_2)$ is a presentation for the prop $(\Par\Iso(\Mat(\F_2),+))$.
\end{lemma}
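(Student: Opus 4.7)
The plan is to apply the distributive law lemma to put every map in $\Par\Iso(\cb_2)$ into a ``co-mono then mono'' normal form, and then to identify this with the span presentation of $\Par\Iso(\Mat(\F_2))$.

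First, I would extend the interpretations $\Iso(\cb_2) \to \Iso(\Mat(\F_2))$ and $\inj(\cb_2) \to \inj(\Mat(\F_2))$ (which are equivalences by the cited results of Lafont) to a prop morphism $\llbracket - \rrbracket: \Par\Iso(\cb_2) \to \Par\Iso(\Mat(\F_2))$, sending the extra co-mono generators (opposites of the X-units) to their counterparts in $\inj(\Mat(\F_2))^\op$. Well-definedness reduces to verifying equation (\ref{extra}) semantically: the composite of the mono $0 \rightarrowtail 1$ with its reversed co-mono $1 \leftarrowtail 0$, viewed as a span from $0$ to $0$, computes to the span $0 \leftarrowtail 0 \rightarrowtail 0$, i.e., the identity partial iso on $0$.

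For fullness, every partial iso $A \leftarrowtail C \rightarrowtail B$ in $\Par\Iso(\Mat(\F_2))$ factors as $(A \leftarrowtail C \rightarrowtail C) \circ (C \leftarrowtail C \rightarrowtail B)$; by the presentations of $\inj(\cb_2)$ and its opposite, each leg lifts syntactically, so the composite yields a preimage in $\Par\Iso(\cb_2)$. For faithfulness, the distributive law lemma gives every map a normal form as a pair (co-mono, mono); two such pairs represent the same arrow in the composite prop iff they are related by an $\Iso(\cb_2)$-map between their apexes intertwining the legs. Semantically this is precisely the span equivalence in $\Par\Iso(\Mat(\F_2))$, and since $\Iso(\cb_2)$ presents all of $\Iso(\Mat(\F_2))$, any semantic apex iso lifts syntactically.

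The main obstacle I anticipate is the faithfulness step: while the distributive law provides a rewrite to normal form, one must carefully argue that the coend-style quotient by $\Iso(\cb_2)$ in the composite prop matches exactly the span equivalence in $\Par\Iso(\Mat(\F_2))$. This rests on $\Iso(\cb_2)$ sitting as a common full sub-prop of both $\inj(\cb_2)$ and $\inj(\cb_2)^\op$, together with Lafont's completeness theorems, which together ensure that the apex iso and its compatibility with the monic legs always admit syntactic representatives realising the equivalence.
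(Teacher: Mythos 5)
Your proposal is correct and follows essentially the route the paper intends: the paper gives no explicit proof of Lemma~\ref{lem:parisocb} (only Remark~\ref{rem:pariso:cb}, verifying that equation~(\ref{extra}) really generates a distributive law), and the implicit argument is exactly the Lack-style one you give — interpret the composite prop into monic spans over $\Mat(\F_2)$, use the distributive law to reduce every map to a (co-mono, mono) normal form, and then get fullness and faithfulness from Lafont's presentations of $\Iso(\cb_2)$ and $\inj(\cb_2)$, with the tensor-over-$\Iso(\cb_2)$ quotient matching span isomorphism. Your handling of the faithfulness step (lifting the semantic apex isomorphism syntactically and sliding it across the pair) is precisely the detail the paper leaves to the reader and later reuses in Lemma~\ref{lem:parisoaffcb}.
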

%This follows from \cite[??]{zanasi}.

We can get partial maps by freely adding a counit to the nonunital, noncounital special commutative Frobenius algebra:

\begin{definition}

Let $\Par(\cb_2)$ denote the pushout of the diagram of props:
$$
\Par\Iso(\cb_2)  \leftarrow  \surj^\op \rightarrow   \cm^\op
$$

\end{definition}

%The following Lemma follows after meticulous calculation and the application of \cite[Lem. 3.5]{zxa}:

\begin{lemma}
\label{lem:parcb}

$\Par(\cb_2)$ is a presentation for the prop $(\Par(\Mat(\F_2),+))$.
\end{lemma}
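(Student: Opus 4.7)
The plan is to show that $(\Par(\Mat(\F_2)),+)$ satisfies the universal property of the pushout defining $\Par(\cb_2)$; combined with Lemma \ref{lem:parisocb} and the earlier general fact that pushouts of monoidal theories present pushouts of props, this yields the result.

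First I would construct a prop morphism $\Phi: \Par(\cb_2) \to (\Par(\Mat(\F_2)),+)$. By the universal property, it suffices to exhibit a compatible cone: a map out of $\Par\Iso(\cb_2)$ and a map out of $\cm^\op$ agreeing on $\surj^\op$. The first is the isomorphism of Lemma \ref{lem:parisocb} postcomposed with the canonical inclusion $\Par\Iso(\Mat(\F_2),+) \hookrightarrow (\Par(\Mat(\F_2)),+)$. The second sends the cocommutative comonoid of $\cm^\op$ to the black codiagonal equipped with the deletion partial map $1 \to 0$, namely the span $1 \leftarrowtail 1 \to 0$; this genuinely is a counit for the codiagonal in $(\Par(\Mat(\F_2)),+)$. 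Both maps restrict to $\surj^\op$ (the unit-free comonoid) in the same way, so the cone is compatible.

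Next I would verify that $\Phi$ is full. Every morphism in $(\Par(\Mat(\F_2)),+)$ is a partial linear map $A \leftarrowtail C \to B$; epi-mono factoring the right leg as $C \twoheadrightarrow D \rightarrowtail B$ expresses it as a partial iso followed by a linear quotient. The epi is realized diagrammatically by iterated codiagonals followed by the newly adjoined deletion counit, exactly mirroring the role of $\surj \otimes_\P \inj$ in presenting $\cm$, while the mono $D \rightarrowtail B$ and the partial iso live in the image of $\Par\Iso(\cb_2)$. So every morphism lies in the image of $\Phi$.

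The main obstacle is faithfulness. My approach is to establish a normal form: by pushing the new counit rightwards using the pushout relations together with the (semi-)Frobenius interactions already present in $\Par\Iso(\cb_2)$, any morphism in $\Par(\cb_2)$ rewrites as a partial iso from $\Par\Iso(\cb_2)$ postcomposed with a tensor of deletion counits and identities. Two parallel morphisms with equal image under $\Phi$ must then share their partial-iso component, by faithfulness from Lemma \ref{lem:parisocb}, and share their counit pattern, by uniqueness of epi-mono factorization in $\Mat(\F_2)$; hence they are provably equal in $\Par(\cb_2)$. The delicate part is checking confluence of the rewriting — i.e.\ that the counit can always be pushed to one side without obstruction — which follows from the coassociativity and cocommutativity of the codiagonal inherited via $\surj^\op$ and the unit laws inherited via $\cm^\op$.
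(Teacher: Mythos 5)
Your overall strategy (build $\Phi$ from the universal property of the pushout, then prove fullness and a normal-form faithfulness argument) departs from the paper's proof, which does no diagram rewriting at all: it observes that $\ParIso(\cb_2)\cong\ParIso(\Mat(\F_2))$ is a discrete inverse category and then invokes the cited fact that the counital completion of a discrete inverse category coincides with its Cartesian completion, which for $\ParIso(\Mat(\F_2))$ is exactly $\Par(\Mat(\F_2))$. Your construction of $\Phi$ and the fullness argument are fine; indeed existence of your normal form is easier than you make it, since the new counit $\epsilon$ has no outputs and interchange alone slides every occurrence of it to the end of a composite.

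The genuine gap is in faithfulness. The normal form ``partial isomorphism followed by a tensor of deletions'' is not unique, so two morphisms with the same image under $\Phi$ need not ``share their partial-iso component'': the monic $x\mapsto(x,0)$ (identity tensored with the black unit) and the monic $x\mapsto(x,x)$ (the copy map) both become the identity of $\F_2$ once the second output is deleted, yet they are distinct partial isomorphisms, and uniqueness of epi--mono factorization in $\Mat(\F_2)$ is irrelevant because these decompositions are not epi--mono factorizations. Faithfulness therefore requires showing that all such pairs are identified by the pushout relations, i.e.\ that the equations governing how $\epsilon$ interacts with \emph{every} map of $\ParIso(\cb_2)$ (for example, $\cnot$ followed by deleting its target wire equals deleting the target wire; the black unit followed by $\epsilon$ equals the empty diagram) are derivable from the counit law, which is the only relation freely added. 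Deriving these is the entire substance of the lemma; coassociativity, cocommutativity and the (co)unit laws you appeal to do not by themselves yield them, and no confluence argument you sketch produces them. They do hold, but establishing this in general is precisely the statement that freely adjoining a counit to a discrete inverse category gives its category of partial maps --- the result of \cite[Lem.~3.5]{zxa} (together with \cite{giles}) on which the paper's own proof rests. So you should either cite that result, at which point the rewriting machinery is unnecessary, or supply these derivations and a genuine uniqueness-up-to-relations analysis of your normal form.
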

The proof is contained in \S \ref{proof:lem:parcb}. 
\begin{comment}
This props has a particularly elegant presentation which is given in \S \ref{subsubsec:presentations:one:par}.
\end{comment}

%
%\begin{corollary}
%Equivalently, $(\Par(\Mat(\F_2),+))$ is presented by the coproduct
%$ T_{\eta_X} + \cb_2 $
%modulo:
%
%?????
%
%\end{corollary}

%
%For $R$ a PID, $(\Span(\Mat(R)),+)$ TODO
%
%
%
%
%For $R$ a PID, $(\Rel(\Mat(R)),+)$ TODO
%
%
%  Kernel, Image, orthogonal complement
%  Pushout of TODO
%
%
%Because are self-dual with respect to the transpose functor, we omit the discussion of cospans and corelations.
%

%I will omit the discussion of spans and relations; however, the details are contained in \cite{zanasi}.

\begin{definition}
Let $\Span(\cb_2)$ denote the pushout of the diagram of props:
$$
\Par(\cb_2)^\op \leftarrow  \ParIso(\cb_2) \rightarrow \Par(\cb_2)
$$
\end{definition}

The following lemma holds because of \cite[Lem. 4.3]{zxa}:

\begin{lemma}
\label{lem:spancb}

$\Span(\cb_2)$ is a presentation for the prop $(\Span(\Mat(\F_2)), +)$.
\end{lemma}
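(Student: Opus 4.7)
The plan is to invoke the cited lemma \cite[Lem. 4.3]{zxa}. I take that result to say that for a suitable category $\X$ with pullbacks and an epi--mono factorization system (which applies to $\Mat(\F_2)$, equivalent to the category of finite-dimensional $\F_2$-vector spaces), one has an equivalence of props
\[
(\Span(\X),+) \;\cong\; (\Par(\X),+)^\op \sqcup_{(\Par\Iso(\X),+)} (\Par(\X),+).
\]
Semantically this reflects the fact that any span $A \xleftarrow{} S \xrightarrow{} B$ can be written as the composite in $\Span(\X)$ of the opposite of a partial map followed by a partial map sharing the common apex $S$; identifying the two copies along partial isomorphisms records the compatibility between these two points of view.

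The proof then proceeds in three steps. First, by Lemma~\ref{lem:parcb} and its dual, I have $\Par(\cb_2) \cong (\Par(\Mat(\F_2)),+)$ and $\Par(\cb_2)^\op \cong (\Par(\Mat(\F_2))^\op,+)$, while Lemma~\ref{lem:parisocb} gives $\Par\Iso(\cb_2) \cong (\Par\Iso(\Mat(\F_2)),+)$. Second, I must verify that the canonical prop morphisms $\Par\Iso(\cb_2) \to \Par(\cb_2)$ and $\Par\Iso(\cb_2) \to \Par(\cb_2)^\op$ used in the syntactic definition of $\Span(\cb_2)$ correspond, under the above isomorphisms, to the semantic inclusions $\Par\Iso(\Mat(\F_2)) \hookrightarrow \Par(\Mat(\F_2))$ and its opposite. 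Third, because pushouts in the category of props are computed strictly and preserved by equivalence of props, I then conclude
\[
\Span(\cb_2) \;\cong\; (\Par(\Mat(\F_2))^\op,+) \sqcup_{(\Par\Iso(\Mat(\F_2)),+)} (\Par(\Mat(\F_2)),+) \;\cong\; (\Span(\Mat(\F_2)),+).
\]

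The main obstacle is the second step, the compatibility of embeddings. The morphism $\Par\Iso(\cb_2) \hookrightarrow \Par(\cb_2)$ is built by freely adjoining a counit to the non-unital, non-counital special commutative Frobenius structure carried by the diagonal and codiagonal (as in the definition of $\Par(\cb_2)$), and dually on the opposite side. On the semantic side, the inclusion $\Par\Iso(\Mat(\F_2)) \hookrightarrow \Par(\Mat(\F_2))$ is given by viewing a partial isomorphism $A \leftarrowtail S \rightarrowtail B$ as a partial map whose defined part is a total isomorphism onto its image. Matching these requires checking on generators that the freely adjoined counit is interpreted exactly as the structure map witnessing totality of the domain inclusion, which is dictated by the presentations established in Lemmas~\ref{lem:parisocb} and~\ref{lem:parcb}; since both constructions respect the shared subprop of partial isomorphisms by design, the verification reduces to a commuting-square check on generators and is routine.
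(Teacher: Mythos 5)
Your overall strategy --- transport the defining pushout $\Par(\cb_2)^\op \leftarrow \ParIso(\cb_2) \rightarrow \Par(\cb_2)$ along the isomorphisms of Lemmas \ref{lem:parisocb} and \ref{lem:parcb} and then identify the resulting semantic pushout with $(\Span(\Mat(\F_2)),+)$ --- is the same cube-shaped argument the paper uses, and your ``compatibility of embeddings'' step is what the paper dismisses as the cube easily commuting. Where you genuinely diverge is in how the key fact is established, namely that the pushout $(\Par(\Mat(\F_2)),+)^\op +_{(\Par\Iso(\Mat(\F_2)),+)} (\Par(\Mat(\F_2)),+)$ really is $(\Span(\Mat(\F_2)),+)$: you outsource this entirely to the cited result \cite[Lem.\ 4.3]{zxa} (which the paper's own prose also gestures at), whereas the written proof does it by hand. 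It checks that the universal comparison map $F$ is identity on objects, full because every span $n \xleftarrow{f} k \xrightarrow{g} m$ is the composite of the reversed partial map $(n \xleftarrow{f} k = k)$ with the partial map $(k = k \xrightarrow{g} m)$, and faithful by an explicit calculation showing that two spans related by an apex isomorphism $h$ become equal in the pushout once $h$ is inserted through the shared $\ParIso$ leg. Your route buys brevity and makes the modular ``glue a discrete inverse category to its opposite'' philosophy explicit; the paper's route is self-contained and makes visible exactly where isomorphism classes of spans are collapsed. The one caution for your version: your paraphrase of the cited lemma (``any $\X$ with pullbacks and an epi--mono factorization system'') is a guess at its hypotheses; in the source it is stated in the setting of discrete inverse categories and their counital/Cartesian completions, so to apply it you must check that $\Par\Iso(\Mat(\F_2))$ is a discrete inverse category and that $\Par(\Mat(\F_2))$ is its completion (this is exactly what Lemma \ref{lem:parcb}'s proof records), and your appeal to pushouts being ``preserved by equivalence of props'' should be sharpened to transporting along the isomorphisms of props already in hand --- with those adjustments your argument goes through.
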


The proof is contained in \S \ref{proof:lem:spancb}.
Given a PID $k$, the prop $(\Span(\Mat(k)), +)$ is already known to have a presentation given in terms of ``interacting Hopf algebras" \cite[Definition 3.13]{zanasi}.  This is also the way in which the phase-free fragment of the ZX-calculus would be presented, in terms of two Frobenius algebras  corresponding to the $Z$ and $X$ observables, interacting to form Hopf algebras in addition to a few more equations.

\section{Additive affine models}
\label{sec:two}

\begin{definition}
Consider the prop  $\Aff\cb_2$ given by adjoining the following generator to $\cb_2$
\hfil
$
\begin{tikzpicture}
	\begin{pgfonlayer}{nodelayer}
		\node [style=none] (0) at (-3.75, -0.25) {};
		\node [style=X] (1) at (-3.75, -1) {$\pi$};
	\end{pgfonlayer}
	\begin{pgfonlayer}{edgelayer}
		\draw (0.center) to (1);
	\end{pgfonlayer}
\end{tikzpicture}
$

modulo the equations:\hspace*{2.2cm}
$
\begin{tikzpicture}
	\begin{pgfonlayer}{nodelayer}
		\node [style=X] (0) at (0.75, 0.25) {$\pi$};
		\node [style=Z] (1) at (0.75, 0.75) {};
		\node [style=none] (2) at (0.5, 1.5) {};
		\node [style=none] (3) at (1, 1.5) {};
	\end{pgfonlayer}
	\begin{pgfonlayer}{edgelayer}
		\draw (1) to (0);
		\draw [in=-90, out=60] (1) to (3.center);
		\draw [in=120, out=-90] (2.center) to (1);
	\end{pgfonlayer}
\end{tikzpicture}
\erefop{bi.two}
\begin{tikzpicture}
	\begin{pgfonlayer}{nodelayer}
		\node [style=X] (0) at (0.5, 0.5) {$\pi$};
		\node [style=none] (1) at (0.5, 1.75) {};
		\node [style=none] (2) at (1, 1.75) {};
		\node [style=X] (3) at (1, 0.5) {$\pi$};
	\end{pgfonlayer}
	\begin{pgfonlayer}{edgelayer}
		\draw (0) to (1.center);
		\draw (2.center) to (3);
	\end{pgfonlayer}
\end{tikzpicture}
\hspace*{1cm}
\begin{tikzpicture}
	\begin{pgfonlayer}{nodelayer}
		\node [style=X] (0) at (0, 0) {$\pi$};
		\node [style=Z] (1) at (0, 0.75) {};
	\end{pgfonlayer}
	\begin{pgfonlayer}{edgelayer}
		\draw (1) to (0);
	\end{pgfonlayer}
\end{tikzpicture}
\eref{extra}
$

\end{definition}

\begin{lemma} \cite[\S 4]{lafont}
 $\Aff\cb_2$ is a presentation for the prop $(\Aff\Mat(\F_2),+)$.
\end{lemma}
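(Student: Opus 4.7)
The plan is to show that the evident interpretation $\llbracket - \rrbracket : \Aff\cb_2 \to (\Aff\Mat(\F_2), +)$, which extends the known interpretation of $\cb_2$ by sending $\pi$ to the constant $1 \in \F_2$ (viewed as an affine map $0 \to 1$), is an isomorphism of props. \textbf{Soundness} is a routine check of the two new axioms: $\Delta_Z \circ \pi = \pi \otimes \pi$ holds because copying $1$ gives $(1,1)$, and $\epsilon_Z \circ \pi = 1$ (the empty scalar, by \eref{extra}) holds because discarding is trivial.

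The main step is a \textbf{normal form} theorem asserting that every morphism $f : n \to m$ in $\Aff\cb_2$ equals $L ; \tau_v$, where $L : n \to m$ is a $\cb_2$-term (hence represents a linear $\F_2$-matrix) and $\tau_v : m \to m$ is an affine translation built by placing a constant state $|v\rangle \in \{\eta_X, \pi\}^{\otimes m}$ beside the $m$ output wires and adding coordinate-wise via $\mu_X$'s. I would prove this by inducting on the diagram structure, migrating each interior $\pi$ toward the outputs: the first new equation duplicates $\pi$ past a $Z$-comultiplication, the second absorbs $\pi$ into a $Z$-counit, and a $\pi$ meeting a $\mu_X$ is relocated past using the commutativity and associativity of $\mu_X$ together with the bialgebra equations of $\cb_2$, leaving a matching shift at the output layer. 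A key derivable identity needed here is $\mu_X \circ (\pi \otimes \pi) = \eta_X$ (i.e.\ $\pi + \pi = 0$), obtained by chaining the first new equation, the Hopf equation \eref{hopf} of $\cb_2$, and the second new equation, giving $\mu_X \circ (\pi \otimes \pi) = \mu_X \circ \Delta_Z \circ \pi = \eta_X \circ \epsilon_Z \circ \pi = \eta_X$.

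Given the normal form, \textbf{fullness} is immediate: every affine map $(M, v) : \F_2^n \to \F_2^m$ arises as $L_M ; \tau_v$ with $L_M$ a $\cb_2$-representative of $M$. For \textbf{faithfulness}, if two normal forms $L_1 ; \tau_{v_1}$ and $L_2 ; \tau_{v_2}$ have equal semantics, then $v_1 = v_2$ (recovered by evaluating at the zero input) and $\llbracket L_1 \rrbracket = \llbracket L_2 \rrbracket$ (obtained by subtracting the translation), so the faithfulness of $\cb_2$ on $\Mat(\F_2)$ forces $L_1 = L_2$ in $\cb_2$, whence the two normal forms agree in $\Aff\cb_2$. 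The main obstacle is the normal form theorem itself: establishing exhaustively that every interaction between $\pi$ and a $\cb_2$-generator can be rewritten to push $\pi$ upward, and that this rewriting terminates. The derivation of $\pi + \pi = 0$ above is the characteristic subtlety; once it is in hand, the remaining cases reduce to standard bialgebra manipulations already available in $\cb_2$.
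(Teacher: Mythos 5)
The paper does not prove this lemma itself: it simply cites Lafont's completeness result for affine Boolean circuits (\cite[\S 4]{lafont}), so there is no in-paper proof to compare against. Your argument is correct and is essentially the standard (Lafont-style) completeness proof: interpret $\pi$ as the constant $1$, check soundness of the two added equations, derive $\pi+\pi=0$ via the Hopf relation exactly as you do, and establish the normal form ``linear $\cb_2$-part followed by a translation'' by pushing $\pi$ through $\Delta_Z$, $\epsilon_Z$ and $\mu_X$ using the new equations together with the bialgebra laws; fullness and faithfulness then reduce, as you say, to the known presentation of $\Mat(\F_2)$ by $\cb_2$. The only place needing a little more care in a full write-up is the commutation of a translation past an arbitrary $\cb_2$-map (it suffices to verify it generator by generator, which is exactly the case analysis you sketch) and the termination of the rewriting, but these are routine.
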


Note that this assumes that affine matrices are non-empty, as this is a prop.  This will become a problem later, when we wish to pull back affine spaces.

\begin{definition}
Consider the prop $\Iso(\Aff\cb_2)$ generated by the controlled not gate, and the not gate (interpreted as matrices):
\hspace*{.2cm}
$
\left\llbracket
\begin{tikzpicture}
	\begin{pgfonlayer}{nodelayer}
		\node [style=oplus] (5) at (0.5, 2.75) {};
		\node [style=dot] (6) at (0, 2.75) {};
		\node [style=none] (7) at (0.5, 3.5) {};
		\node [style=none] (8) at (0.5, 2) {};
		\node [style=none] (9) at (0, 2) {};
		\node [style=none] (10) at (0, 3.5) {};
	\end{pgfonlayer}
	\begin{pgfonlayer}{edgelayer}
		\draw (8.center) to (5);
		\draw (5) to (7.center);
		\draw (10.center) to (6);
		\draw (6) to (5);
		\draw (6) to (9.center);
	\end{pgfonlayer}
\end{tikzpicture}
\right\rrbracket
=
\begin{tikzpicture}
	\begin{pgfonlayer}{nodelayer}
		\node [style=X] (0) at (-0.25, -1) {};
		\node [style=Z] (1) at (-0.75, -1.75) {};
		\node [style=none] (2) at (0, -2.25) {};
		\node [style=none] (3) at (-0.25, -0.5) {};
		\node [style=none] (4) at (-1, -0.5) {};
		\node [style=none] (5) at (-0.75, -2.25) {};
	\end{pgfonlayer}
	\begin{pgfonlayer}{edgelayer}
		\draw (3.center) to (0);
		\draw [in=90, out=-75] (0) to (2.center);
		\draw (5.center) to (1);
		\draw (1) to (0);
		\draw [in=-90, out=105] (1) to (4.center);
	\end{pgfonlayer}
\end{tikzpicture}
\hspace*{1cm}
\left\llbracket
\begin{tikzpicture}
	\begin{pgfonlayer}{nodelayer}
		\node [style=none] (0) at (0, -0.5) {};
		\node [style=none] (1) at (0, -1.5) {};
		\node [style=oplus] (2) at (0, -1) {};
	\end{pgfonlayer}
	\begin{pgfonlayer}{edgelayer}
		\draw (1.center) to (2);
		\draw (2) to (0.center);
	\end{pgfonlayer}
\end{tikzpicture}
\right\rrbracket
=
\begin{tikzpicture}
	\begin{pgfonlayer}{nodelayer}
		\node [style=none] (0) at (1, 1) {};
		\node [style=none] (1) at (0.75, 2) {};
		\node [style=X] (2) at (0.75, 1.5) {};
		\node [style=X] (3) at (0.5, 1) {$\pi$};
		\node [style=none] (4) at (1, 0.5) {};
	\end{pgfonlayer}
	\begin{pgfonlayer}{edgelayer}
		\draw (1.center) to (2);
		\draw [in=90, out=-45, looseness=0.75] (2) to (0.center);
		\draw [in=90, out=-135, looseness=0.75] (2) to (3);
		\draw (0.center) to (4.center);
	\end{pgfonlayer}
\end{tikzpicture}
$

Modulo the relations of $\Iso(\cb_2)$ as well as the additional relations:
$$
\begin{tikzpicture}
	\begin{pgfonlayer}{nodelayer}
		\node [style=oplus] (0) at (0, 0) {};
		\node [style=oplus] (1) at (0, 0.5) {};
		\node [style=none] (2) at (0, 1) {};
		\node [style=none] (3) at (0, -0.5) {};
	\end{pgfonlayer}
	\begin{pgfonlayer}{edgelayer}
		\draw (2.center) to (3.center);
	\end{pgfonlayer}
\end{tikzpicture}
\eq{cnot.seven}
\begin{tikzpicture}
	\begin{pgfonlayer}{nodelayer}
		\node [style=none] (2) at (0, 1) {};
		\node [style=none] (3) at (0, -0.5) {};
	\end{pgfonlayer}
	\begin{pgfonlayer}{edgelayer}
		\draw (2.center) to (3.center);
	\end{pgfonlayer}
\end{tikzpicture}
\hspace*{.5cm}
\begin{tikzpicture}
	\begin{pgfonlayer}{nodelayer}
		\node [style=dot] (0) at (2, 2) {};
		\node [style=oplus] (1) at (2.5, 2) {};
		\node [style=oplus] (2) at (2, 1.5) {};
		\node [style=none] (3) at (2, 1) {};
		\node [style=none] (4) at (2.5, 1) {};
		\node [style=none] (5) at (2, 2.5) {};
		\node [style=none] (6) at (2.5, 2.5) {};
	\end{pgfonlayer}
	\begin{pgfonlayer}{edgelayer}
		\draw (3.center) to (2);
		\draw (2) to (0);
		\draw (0) to (5.center);
		\draw (6.center) to (1);
		\draw (1) to (0);
		\draw (4.center) to (1);
	\end{pgfonlayer}
\end{tikzpicture}
\eq{cnot.eight}
\begin{tikzpicture}
	\begin{pgfonlayer}{nodelayer}
		\node [style=dot] (0) at (2, 1.5) {};
		\node [style=oplus] (1) at (2.5, 1.5) {};
		\node [style=none] (3) at (2, 1) {};
		\node [style=none] (4) at (2.5, 1) {};
		\node [style=none] (5) at (2, 2.5) {};
		\node [style=none] (6) at (2.5, 2.5) {};
		\node [style=oplus] (7) at (2, 2) {};
		\node [style=oplus] (8) at (2.5, 2) {};
	\end{pgfonlayer}
	\begin{pgfonlayer}{edgelayer}
		\draw (0) to (5.center);
		\draw (6.center) to (1);
		\draw (1) to (0);
		\draw (4.center) to (1);
		\draw (3.center) to (0);
	\end{pgfonlayer}
\end{tikzpicture}
\hspace*{.5cm}
\begin{tikzpicture}
	\begin{pgfonlayer}{nodelayer}
		\node [style=dot] (0) at (2, 2) {};
		\node [style=oplus] (1) at (2.5, 2) {};
		\node [style=none] (3) at (2, 1) {};
		\node [style=none] (4) at (2.5, 1) {};
		\node [style=none] (5) at (2, 2.5) {};
		\node [style=none] (6) at (2.5, 2.5) {};
		\node [style=oplus] (8) at (2.5, 1.5) {};
	\end{pgfonlayer}
	\begin{pgfonlayer}{edgelayer}
		\draw (0) to (5.center);
		\draw (6.center) to (1);
		\draw (1) to (0);
		\draw (4.center) to (1);
		\draw (3.center) to (0);
	\end{pgfonlayer}
\end{tikzpicture}
\eq{cnot.nine}
\begin{tikzpicture}
	\begin{pgfonlayer}{nodelayer}
		\node [style=dot] (0) at (2, 1.5) {};
		\node [style=oplus] (1) at (2.5, 1.5) {};
		\node [style=none] (3) at (2, 1) {};
		\node [style=none] (4) at (2.5, 1) {};
		\node [style=none] (5) at (2, 2.5) {};
		\node [style=none] (6) at (2.5, 2.5) {};
		\node [style=oplus] (8) at (2.5, 2) {};
	\end{pgfonlayer}
	\begin{pgfonlayer}{edgelayer}
		\draw (0) to (5.center);
		\draw (6.center) to (1);
		\draw (1) to (0);
		\draw (4.center) to (1);
		\draw (3.center) to (0);
	\end{pgfonlayer}
\end{tikzpicture}
$$

\end{definition}

\begin{lemma}\cite[Thm. 11]{lafont}
$\Iso(\Aff\cb_2)$ is a presentation for the prop $(\Iso(\Aff\Mat(\F_2),+))$.
\end{lemma}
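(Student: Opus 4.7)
The plan is to prove that the semantics functor $\llbracket - \rrbracket : \Iso(\Aff\cb_2) \to (\Iso(\Aff\Mat(\F_2)),+)$ is an isomorphism of props by verifying soundness, fullness, and faithfulness. Soundness is a direct calculation: interpret the NOT gate as the affine map $x \mapsto x+1$ and the CNOT as the linear map $(x,y) \mapsto (x, x+y)$, and check that (\ref{cnot.seven}), (\ref{cnot.eight}), (\ref{cnot.nine}) all hold as equalities of affine $\F_2$-maps (for instance, (\ref{cnot.eight}) says that flipping the control before the CNOT is the same as flipping both bits after, while (\ref{cnot.nine}) says that flipping the target commutes with the CNOT). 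The relations inherited from $\Iso(\cb_2)$ are sound by the cited \cite[Thm. 6]{lafont}.

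Fullness is immediate: any invertible affine endomorphism of $\F_2^n$ has the form $x \mapsto Ax + b$ with $A \in GL_n(\F_2)$ and $b \in \F_2^n$, and this is realized by composing a diagram built from CNOTs (which realizes $A$ by fullness of $\Iso(\cb_2) \twoheadrightarrow \Iso(\Mat(\F_2))$) with a tensor product of NOTs and identities at the top, encoding $b$.

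For faithfulness I would establish a normal form: every morphism of $\Iso(\Aff\cb_2)$ rewrites, using only the stated equations, to a diagram $N \circ C$ where $C$ is a CNOT-only sub-diagram of $\Iso(\cb_2)$ and $N$ is a tensor of NOTs and identities on the output wires. The rewriting proceeds in three stages. First, use (\ref{cnot.eight}) and (\ref{cnot.nine}) as directed rules to push every NOT past every CNOT lying above it; (\ref{cnot.nine}) moves a NOT on the target past the CNOT freely, while (\ref{cnot.eight}) moves a NOT on the control past at the cost of introducing a second NOT on the target, so the process stays inside the NOT/CNOT fragment and strictly lowers a suitable measure (for instance, total depth below the top of the diagram summed over all NOTs). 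Second, use (\ref{cnot.seven}) to cancel any pairs of NOTs that collide on the same wire, leaving at most one NOT per output wire in the top layer. Third, the remaining CNOT-only sub-diagram $C$ is equated with any other such sub-diagram of equal linear semantics by \cite[Thm. 6]{lafont}.

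The main obstacle is confluence: the rewriting to normal form must be independent of the order in which NOTs are pushed, and the $\Iso(\cb_2)$-equivalences on the linear part must not be disturbed by having a translation layer sitting above them. The cleanest way to handle this is to fix once and for all a canonical $\Iso(\cb_2)$-form for the linear part (using the completeness of $\Iso(\cb_2)$) and observe that the translation layer is then forced by the affine image of any basis vector, so two diagrams with the same semantics collapse to the same normal form and are therefore provably equal in $\Iso(\Aff\cb_2)$.
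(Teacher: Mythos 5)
The paper gives no proof of this lemma: it is quoted directly from Lafont (Thm.~11), whose argument is a canonical-form reduction of NOT/CNOT circuits, so your proposal is essentially a reconstruction of the cited proof rather than a different route. Soundness, fullness (linear part realized by CNOTs via the completeness of $\Iso(\cb_2)$, translation by a layer of NOTs), and the normal form $N\circ C$ with the NOT layer read off from the semantics and the CNOT part handled by the completeness of the linear fragment is exactly the right shape of argument, and your observation that confluence is not really needed once the normal form is pinned down by the semantics is the correct way to close it.

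Two local repairs are needed. First, your termination measure fails as stated: rule (\ref{cnot.eight}) replaces one NOT below a CNOT by \emph{two} NOTs above it, so the ``total depth below the top summed over all NOTs'' need not decrease (a NOT at depth $d$ can become two NOTs at depth $d-1$). Use instead the multiset of the numbers of CNOTs lying above each NOT with the Dershowitz--Manna multiset ordering (each step replaces one element by finitely many strictly smaller ones), or simply push the topmost NOT all the way out before touching the others; either makes the rewriting terminate inside the NOT/CNOT fragment. Second, the translation layer is forced by the image of the \emph{zero} vector, not of ``any basis vector'': $b=f(0)$ determines $N$, and then $A$ determines the linear semantics of $C$, after which $\Iso(\cb_2)$-completeness identifies $C$ with the chosen canonical linear circuit. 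With these adjustments your faithfulness argument goes through.
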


\begin{definition}
Let $\inj(\Aff\cb_2)$ denote the pushout of the diagram of props:
$$
 \inj(\cb_2) \leftarrow  \Iso(\cb_2)\rightarrow  \Iso(\Aff\cb_2)
$$
\end{definition}

\begin{lemma}
\label{lem:injaffcb}
$\inj(\Aff\cb_2)$ is a presentation for the prop $(\inj(\Aff\Mat(\F_2)),+)$.
\end{lemma}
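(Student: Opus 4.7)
The plan is to show that the prop morphism $\phi: \inj(\Aff\cb_2) \to (\inj(\Aff\Mat(\F_2)),+)$ induced by the universal property of the pushout is an isomorphism. By the preceding lemmas, each of $\inj(\cb_2)$, $\Iso(\cb_2)$, and $\Iso(\Aff\cb_2)$ has a known semantic presentation, and composing these with the evident embeddings into $(\inj(\Aff\Mat(\F_2)),+)$ yields three functors that agree on the overlap $\Iso(\cb_2)$; the universal property of the pushout $\inj(\Aff\cb_2)$ then delivers $\phi$.

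For fullness, every morphism of $(\inj(\Aff\Mat(\F_2)),+)$ has the form $f(x) = Ax + b$ for an injective linear map $A$ and a vector $b \in \F_2^m$. I would factor $f = t_b \circ A$, where $t_b: y \mapsto y + b$ is translation by $b$, which is an affine isomorphism on $\F_2^m$. The linear injection $A$ is a morphism of $\inj(\cb_2)$, while $t_b$ is realized in $\Iso(\Aff\cb_2)$ by a parallel product of $\pi$ generators on exactly the wires where $b$ is nonzero; their composite in the pushout is a preimage of $f$ under $\phi$.

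The main obstacle is faithfulness, which is addressed by establishing a normal form: every morphism of $\inj(\Aff\cb_2)$ reduces to the shape $t \circ A$ with $A$ drawn inside $\inj(\cb_2)$ and $t$ drawn inside $\Iso(\Aff\cb_2)$. Starting from an arbitrary zigzag representation, each $\pi$ generator appearing to the left of an $\inj(\cb_2)$-generator is propagated rightward: past a linear isomorphism using the identification in the shared subprop $\Iso(\cb_2)$, and past a $\cnot$ using equations (\ref{cnot.eight}) and (\ref{cnot.nine}) of $\Iso(\Aff\cb_2)$; black-dot units occur only at the domain boundary and need no commutation. Once in normal form, the semantic decomposition $f = t_b \circ A$ is unique up to insertion of a linear isomorphism between $A$ and $t_b$, and this is precisely the ambiguity quotiented out by the pushout gluing along $\Iso(\cb_2)$; appealing to completeness of $\inj(\cb_2)$ and $\Iso(\Aff\cb_2)$ on the respective halves of the normal form then yields injectivity of $\phi$ on morphisms. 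A more conceptual alternative is to observe that this semantic factorization turns $\inj(\Aff\Mat(\F_2))$ itself into the pushout of its analogous semantic diagram and then invoke Zanasi's pushout cube lemma to transport the isomorphism of presentations along each leg to one between the syntactic and semantic pushouts.
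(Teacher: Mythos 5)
Your proposal is correct, and its skeleton (universal map out of the pushout; fullness via the factorization of an affine injection as a linear injection followed by a translation; faithfulness via a normal form glued along the shared linear isomorphisms) matches the paper's. The genuine difference is where the faithfulness work happens: you carry out the normal-form reduction \emph{syntactically} inside $\inj(\Aff\cb_2)$, commuting $\pi$ generators past $\cnot$s with (\ref{cnot.eight}) and (\ref{cnot.nine}) (and cancelling with (\ref{cnot.seven})), then invoke completeness of $\inj(\cb_2)$ and $\Iso(\Aff\cb_2)$ on the two halves; the paper instead never touches the circuit relations at that stage --- it forms the \emph{semantic} pushout $(\Iso(\Aff\Mat(\F_2)),+)+_{(\Iso(\Mat(\F_2)),+)}(\inj(\Mat(\F_2)),+)$, shows by plain matrix algebra that every alternating composite there reduces uniquely to a linear injection followed by a translation $(1,x)$, and transports the conclusion through the commuting cube of presentations --- exactly the ``more conceptual alternative'' you mention at the end. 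Your route buys a concrete rewriting procedure on diagrams but obliges you to check that the generator relations really suffice for every commutation (they do: states slide to the domain boundary by interchange, and $\pi$'s pass $\cnot$-circuits entirely within $\Iso(\Aff\cb_2)$); the paper's route buys a shorter argument at the price of the cube bookkeeping. Two small imprecisions in yours: pushing a $\pi$ past a linear isomorphism is not ``the identification in the shared subprop'' --- it is again (\ref{cnot.eight})/(\ref{cnot.nine}) applied to the $\cnot$-circuit representing that isomorphism, the shared subprop only being what lets you regard that circuit as living in $\Iso(\Aff\cb_2)$; and once the affine part of the normal form is a pure translation, the semantic factorization is unique on the nose (the translation is the image of $0$), so the ``up to insertion of a linear isomorphism'' ambiguity, while harmlessly absorbed by the gluing, is not actually needed.
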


The proof is contained in \S \ref{lem:injaffcb}.

To define partial isomorphisms, we must add a point to the constituent props of the desired distributive law, because the empty set can arise as a subobject by pullback (where the empty set is not properly an object in the prop).

%\begin{definition}
%Given a prop $\X$, let $\X!$ denote the prop generated by adding a scalar $0$,  quotiented by the equation, for all parallel $f,g$: 
%$
%f \otimes 0  =  g \otimes 0
%$ 
%\end{definition}
%
%
%
%That is to say, $\X!$ is the prop with zero maps formally added.  In affine matrices, there is no proper zero object: the one element space is the terminal object and the empty set is the initial object.  By taking spans of affine matrices, the initial object becomes a zero object; however, seeing as we are working with props, the empty set can not be represented using this formalism.  Thus we just add the zero object as a subjobject.

\begin{definition}
\label{def:isoaffcbzero}
Let $\Iso(\Aff\cb_2)^{+1}$ denote the prop obtained by adjoining the following generator to $\Iso(\Aff\cb_2)$ 
$
\begin{tikzpicture}
	\begin{pgfonlayer}{nodelayer}
		\node [style=X] (0) at (0, 0) {$\pi$};
	\end{pgfonlayer}
\end{tikzpicture}
$
modulo the equations:
$$
\begin{tikzpicture}
	\begin{pgfonlayer}{nodelayer}
		\node [style=X] (0) at (0, 0) {$\pi$};
		\node [style=X] (3) at (0.5, 0) {$\pi$};
	\end{pgfonlayer}
\end{tikzpicture}
\eq{zero.one}
\begin{tikzpicture}
	\begin{pgfonlayer}{nodelayer}
		\node [style=X] (0) at (0, 0) {$\pi$};
	\end{pgfonlayer}
\end{tikzpicture},
\hspace*{.5cm}
\begin{tikzpicture}
	\begin{pgfonlayer}{nodelayer}
		\node [style=X] (0) at (0, 1) {$\pi$};
		\node [style=none] (1) at (0.5, 0.5) {};
		\node [style=none] (2) at (0.5, 1.5) {};
		\node [style=none] (3) at (1, 1.5) {};
		\node [style=none] (4) at (1, 0.5) {};
		\node [style=dot] (5) at (0.5, 1) {};
		\node [style=oplus] (6) at (1, 1) {};
	\end{pgfonlayer}
	\begin{pgfonlayer}{edgelayer}
		\draw [in=90, out=-90] (2.center) to (1.center);
		\draw [in=-90, out=90] (4.center) to (3.center);
		\draw (6) to (5);
	\end{pgfonlayer}
\end{tikzpicture}
\eq{zero.two}
\begin{tikzpicture}
	\begin{pgfonlayer}{nodelayer}
		\node [style=X] (0) at (0, 1) {$\pi$};
		\node [style=none] (1) at (0.5, 0.5) {};
		\node [style=none] (2) at (0.5, 1.5) {};
		\node [style=none] (3) at (1, 1.5) {};
		\node [style=none] (4) at (1, 0.5) {};
	\end{pgfonlayer}
	\begin{pgfonlayer}{edgelayer}
		\draw [in=90, out=-90] (2.center) to (1.center);
		\draw [in=-90, out=90] (4.center) to (3.center);
	\end{pgfonlayer}
\end{tikzpicture},
\hspace*{.5cm}
\begin{tikzpicture}
	\begin{pgfonlayer}{nodelayer}
		\node [style=X] (0) at (0, 1) {$\pi$};
		\node [style=none] (1) at (0.5, 0.5) {};
		\node [style=none] (2) at (1, 1.5) {};
		\node [style=none] (3) at (0.5, 1.5) {};
		\node [style=none] (4) at (1, 0.5) {};
	\end{pgfonlayer}
	\begin{pgfonlayer}{edgelayer}
		\draw [in=90, out=-90] (2.center) to (1.center);
		\draw [in=-90, out=90] (4.center) to (3.center);
	\end{pgfonlayer}
\end{tikzpicture}
\eq{zero.three}
\begin{tikzpicture}
	\begin{pgfonlayer}{nodelayer}
		\node [style=X] (0) at (0, 1) {$\pi$};
		\node [style=none] (1) at (0.5, 0.5) {};
		\node [style=none] (2) at (0.5, 1.5) {};
		\node [style=none] (3) at (1, 1.5) {};
		\node [style=none] (4) at (1, 0.5) {};
	\end{pgfonlayer}
	\begin{pgfonlayer}{edgelayer}
		\draw [in=90, out=-90] (2.center) to (1.center);
		\draw [in=-90, out=90] (4.center) to (3.center);
	\end{pgfonlayer}
\end{tikzpicture},
\hspace*{.5cm}
\begin{tikzpicture}
	\begin{pgfonlayer}{nodelayer}
		\node [style=X] (0) at (0, 1) {$\pi$};
		\node [style=none] (1) at (0.5, 0.5) {};
		\node [style=none] (2) at (0.5, 1.5) {};
		\node [style=oplus] (3) at (0.5, 1) {};
	\end{pgfonlayer}
	\begin{pgfonlayer}{edgelayer}
		\draw (2.center) to (1.center);
	\end{pgfonlayer}
\end{tikzpicture}
\eq{zero.four}
\begin{tikzpicture}
	\begin{pgfonlayer}{nodelayer}
		\node [style=X] (0) at (0, 1) {$\pi$};
		\node [style=none] (1) at (0.5, 0.5) {};
		\node [style=none] (2) at (0.5, 1.5) {};
	\end{pgfonlayer}
	\begin{pgfonlayer}{edgelayer}
		\draw (2.center) to (1.center);
	\end{pgfonlayer}
\end{tikzpicture}
$$
\end{definition}

\begin{lemma}
$\Iso(\Aff\cb_2)^{+1}$ is a presentation for the subcategory of $(\Span(\Aff\Fin\Vect(\F_2)), +)$ generated by spans $\F_2^n = \F_2^n \xrightarrow[\cong]{f} \F_2^n$ and $\F_2^n \xleftarrowtail {?} \emptyset \xrightarrowtail{?}  \F_2^n$, for all $n \in \N$ and isomorphisms $f$. 
\end{lemma}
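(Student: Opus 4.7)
The plan is to establish a normal form for diagrams in $\Iso(\Aff\cb_2)^{+1}$ and match it with the morphisms in the target subcategory. The target subcategory consists, for each $n$, of two disjoint families of endomorphisms of $\F_2^n$: the iso-legged spans (representing affine automorphisms) and the unique empty-apex span $\F_2^n \leftarrowtail \emptyset \rightarrowtail \F_2^n$. For soundness I would interpret $\pi: 0 \to 0$ as the empty-apex scalar span $0 \leftarrowtail \emptyset \rightarrowtail 0$; then (zero.one) holds since $\emptyset \times \emptyset \cong \emptyset$ in $\Aff\Fin\Vect(\F_2)$, while (zero.two), (zero.three), (zero.four) each hold because tensoring any generator with the empty-apex scalar collapses the apex to $\emptyset$ regardless of what the iso-legs were, so the resulting span depends only on the boundary type.

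The central technical step is a reduction lemma: for every morphism $g: n \to n$ in $\Iso(\Aff\cb_2)$, the equation $\pi \otimes g = \pi \otimes \mathrm{id}_n$ holds in $\Iso(\Aff\cb_2)^{+1}$. I would prove this by induction on the syntactic structure of $g$, built from the generators cnot, not, swap, and identity via composition and tensor. The base cases are handled directly by (zero.two), (zero.three), and (zero.four) (and trivially for the identity). The key observation for the inductive step is that $\pi$ is simultaneously tensor-idempotent and composition-idempotent: (zero.one) gives $\pi \otimes \pi = \pi$, and since $\pi$ is a scalar (type $0 \to 0$), composition and tensor agree on it, so $\pi \circ \pi = \pi$ as well. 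Middle-four interchange then yields $\pi \otimes (g_1 \circ g_2) = (\pi \circ \pi) \otimes (g_1 \circ g_2) = (\pi \otimes g_1) \circ (\pi \otimes g_2)$, and two applications of the induction hypothesis reduce this to $(\pi \otimes \mathrm{id}_n) \circ (\pi \otimes \mathrm{id}_n) = \pi \otimes \mathrm{id}_n$; the tensor case is analogous after splitting the identity and re-using (zero.one).

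With the reduction lemma in hand, every diagram in $\Iso(\Aff\cb_2)^{+1}$ that contains at least one occurrence of $\pi$ is provably equal to $\pi \otimes \mathrm{id}_n$ for the appropriate $n$, while every $\pi$-free diagram lies in $\Iso(\Aff\cb_2)$. The semantic interpretation is therefore bijective on each hom-set: on $\pi$-free diagrams this is the previously established completeness of $\Iso(\Aff\cb_2)$ for iso-legged spans, and for the normal form $\pi \otimes \mathrm{id}_n$ there is exactly one candidate target, namely the empty-apex span at $\F_2^n$. The two normal-form classes map into disjoint subsets of the target (iso-legged spans have non-empty apex; empty-apex spans do not), so the induced prop morphism is fully faithful onto the generated subcategory.

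The main obstacle is the reduction lemma itself: it requires using the scalar idempotence of $\pi$ in a somewhat subtle way to ensure that the copies of $\pi$ introduced by the interchange law can always be consolidated back into one. Once that ingredient is isolated, the remaining verifications are routine inductions on the syntactic structure of diagrams, and the case analysis for semantic distinctness of the two normal forms is immediate.
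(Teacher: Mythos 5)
Your proposal is correct and follows essentially the same route as the paper: interpret $\pi$ as the empty-apex scalar span and reduce any diagram containing a $\pi$ to the normal form $\pi \otimes \mathrm{id}_n$ via the equations (zero.one)--(zero.four), with $\pi$-free diagrams handled by the already-established completeness of $\Iso(\Aff\cb_2)$. Your reduction lemma (using scalar idempotence of $\pi$ and the interchange law) is simply a careful, explicit version of the paper's terse ``repeatedly apply these identities until the identity tensored by $\pi$ is obtained.''
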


%
%\begin{lemma}
%
%The props $\Iso(\Aff\cb_2)^{+1}$ and $\Iso(\Aff\cb_2)!$ are isomorphic.
%
%\end{lemma}

\begin{proof}
Identify this new generator with the span $\F_2^0 \leftarrow \emptyset \rightarrow \F_2^0$.  If there is a factor of 
$
\begin{tikzpicture}
	\begin{pgfonlayer}{nodelayer}
		\node [style=X] (0) at (0, 0) {$\pi$};
	\end{pgfonlayer}
\end{tikzpicture}
$,   repeatedly apply these identities from left to right until the diagram corresponding to the identity tensored by $
\begin{tikzpicture}
	\begin{pgfonlayer}{nodelayer}
		\node [style=X] (0) at (0, 0) {$\pi$};
	\end{pgfonlayer}
\end{tikzpicture}
$ is obtained, which is as a normal form.
\end{proof}

\begin{definition}
Let $\inj(\Aff\cb_2)^{+1}$ denote the pushout of the diagram of props:

$$
\inj(\Aff\cb_2) \leftarrow \Iso(\Aff\cb_2) \rightarrow \Iso(\Aff\cb_2)^{+1}
$$

\end{definition}

\begin{lemma}
$\inj(\Aff\cb_2)^{+1}$ is a presentation for the subcategory of $(\Span(\Aff\Fin\Vect(\F_2)), +)$ generated by spans $\F_2^n = \F_2^n \xrightarrowtail{e} \F_2^m$ and $\F_2^n \xleftarrowtail{?} \emptyset \xrightarrowtail{?}  \F_2^n$, for all $n,m \in \N$ and monics $e$. 
\end{lemma}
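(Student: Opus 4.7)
The plan is to establish the equivalence between $\inj(\Aff\cb_2)^{+1}$ and the described subcategory via a normal form argument. Soundness comes for free from the universal property of the pushout: Lemma \ref{lem:injaffcb} and the preceding lemma provide compatible interpretations of $\inj(\Aff\cb_2)$ and $\Iso(\Aff\cb_2)^{+1}$ into $(\Span(\Aff\Fin\Vect(\F_2)),+)$, yielding an interpretation $\llbracket-\rrbracket : \inj(\Aff\cb_2)^{+1} \to (\Span(\Aff\Fin\Vect(\F_2)),+)$ that lands in the described subcategory, using that the initial object $\emptyset$ is absorbing under $\oplus$ in $\Aff\Fin\Vect(\F_2)$.

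The core of the argument is a normal form: every morphism of $\inj(\Aff\cb_2)^{+1}$ is either (i) a morphism inherited from $\inj(\Aff\cb_2)$, representing a monic span; or (ii) of the form $\pi \otimes e$ for some monic $e$ of $\inj(\Aff\cb_2)$, representing a zero span. To reduce an arbitrary diagram to this form, we push each occurrence of the scalar $\pi$ through the diagram using equations (\ref{zero.two})--(\ref{zero.four}), which by induction on the generator decomposition of an iso of $\Iso(\Aff\cb_2)$ imply $\pi \otimes f = \pi \otimes \mathrm{id}$ for every iso $f$; multiple $\pi$'s merge via (\ref{zero.one}). Fullness follows immediately: every monic span is in the image of $\inj(\Aff\cb_2)$, and every zero span $\F_2^n \leftarrow \emptyset \to \F_2^m$ is realised by $\llbracket \pi \otimes e\rrbracket$ for any monic $e : n \to m$.

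The main obstacle is showing the normal form of type (ii) is uniquely determined by $(n,m)$: for any two monics $e, e' : \F_2^n \to \F_2^m$, the equation $\pi \otimes e = \pi \otimes e'$ must be derivable. The key observation is that the affine automorphism groups of $\F_2^n$ and $\F_2^m$ jointly act transitively on monics of a fixed type, so $e' = g \circ e \circ h$ for affine isos $g, h$. Since $\pi$ is a scalar and $\pi \otimes \pi = \pi \circ \pi = \pi$ by (\ref{zero.one}), the interchange law gives $\pi \otimes (g \circ e \circ h) = (\pi \otimes g) \circ (\pi \otimes e) \circ (\pi \otimes h)$; the endpoints collapse to $\pi \otimes \mathrm{id}$ by the iso case, so the right-hand side equals $\pi \otimes e$. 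The two normal-form classes are distinguished semantically (nonempty vs.\ empty apex of the span), and faithfulness within class (i) is inherited from Lemma \ref{lem:injaffcb}, since every new equation mentions $\pi$ and so introduces no identifications among $\pi$-free diagrams.
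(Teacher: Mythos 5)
Your proposal is correct and takes essentially the same route as the paper: both arguments reduce any diagram containing the scalar $\pi$ to a canonical form by absorbing isomorphisms through equations (\ref{zero.one})--(\ref{zero.four}), handle the $\pi$-free case by Lemma \ref{lem:injaffcb}, and use that there is exactly one empty-apex span of each type. The only cosmetic difference is that the paper fixes the concrete normal form ($\pi$ tensored with identity wires and black units), whereas you show all $\pi\otimes e$ of a given type are provably equal via transitivity of the affine automorphism action -- which is the same underlying fact that every monic factors as the standard inclusion followed by an isomorphism.
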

%
%\begin{lemma}
%The props $\inj(\Aff\cb_2)^{+1}$ and $\inj(\Aff\cb_2)!$ are isomorphic.
%\end{lemma}

The proof of this lemma is essentially the same for $\Iso(\Aff\cb_2)^{+1}$, although diagrams with a factor of
$
\begin{tikzpicture}
	\begin{pgfonlayer}{nodelayer}
		\node [style=X] (0) at (0, 0) {$\pi$};
	\end{pgfonlayer}
\end{tikzpicture}
$ are reduced to the following normal form:
\hspace*{3cm}
$
\begin{tikzpicture}
	\begin{pgfonlayer}{nodelayer}
		\node [style=X] (0) at (0, 1.25) {$\pi$};
		\node [style=none] (1) at (0.5, 0.5) {};
		\node [style=none] (2) at (0.5, 1.75) {};
		\node [style=none] (3) at (1, 0.5) {};
		\node [style=none] (4) at (1, 1.75) {};
		\node [style=X] (5) at (1.5, 0.75) {};
		\node [style=X] (6) at (2, 0.75) {};
		\node [style=none] (7) at (1.5, 1.75) {};
		\node [style=none] (8) at (2, 1.75) {};
		\node [style=none] (9) at (0.75, 1.5) {$n$};
		\node [style=none] (10) at (1.75, 1.5) {$m$};
		\node [style=none] (11) at (1.77, 1.25) {$\cdots$};
		\node [style=none] (12) at (0.77, 1.25) {$\cdots$};
	\end{pgfonlayer}
	\begin{pgfonlayer}{edgelayer}
		\draw (2.center) to (1.center);
		\draw (4.center) to (3.center);
		\draw (5) to (7.center);
		\draw (8.center) to (6);
	\end{pgfonlayer}
\end{tikzpicture}
$

Unlike in the linear case, now we must consider a distributive law over a prop which is not a groupoid: we add a single idempotent corresponding to the empty set to the isomorphisms.  To satisfy the requirement that this prop is a sub-prop of the left and right components of the  distributive law, we also add this idempotent to the injections and the co-injections:

\begin{definition}
\label{def:parisoaffcb}
Consider the prop $\pr\iso\Aff\cb_2$ generated by the distributive law of props:

$$
 (\inj(\Aff\cb_2)^{+1})^\op \otimes_{\Iso(\Aff\cb_2)^{+1}}  \inj(\Aff\cb_2)^{+1}
$$
Given by the equations of $\pr\iso\Aff\cb_2$ as well as:
\!
$
\begin{tikzpicture}
	\begin{pgfonlayer}{nodelayer}
		\node [style=X] (0) at (0.5, 0.75) {$\pi$};
		\node [style=X] (1) at (0.5, 0) {};
	\end{pgfonlayer}
	\begin{pgfonlayer}{edgelayer}
		\draw (0) to (1);
	\end{pgfonlayer}
\end{tikzpicture}
=
\begin{tikzpicture}
	\begin{pgfonlayer}{nodelayer}
		\node [style=X] (0) at (0, 0) {$\pi$};
		\node [style=X] (1) at (0, 0.75) {};
	\end{pgfonlayer}
	\begin{pgfonlayer}{edgelayer}
		\draw (0) to (1);
	\end{pgfonlayer}
\end{tikzpicture}
\eq{zero.five}
\begin{tikzpicture}
	\begin{pgfonlayer}{nodelayer}
		\node [style=X] (0) at (0, 0) {$\pi$};
	\end{pgfonlayer}
	\begin{pgfonlayer}{edgelayer}
	\end{pgfonlayer}
\end{tikzpicture}
$

\end{definition}

To see that this is actually a distributive law, see Remark \ref{rem:parisoaffcb}.

\begin{lemma}
\label{lem:parisoaffcb}
$\ParIso(\Aff\cb_2)$ is a presentation for the full subcategory $\Par\Iso(\Aff\Fin\Vect(\F_2))^*$ of $\Par\Iso(\Aff\Fin\Vect(\F_2))$ where the objects are nonempty affine vector spaces.
\end{lemma}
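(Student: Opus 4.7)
The plan is to apply the presentation lemma for distributive laws of props to the setup of Definition~\ref{def:parisoaffcb}, which reduces $\ParIso(\Aff\cb_2)$ to formal composites $m^\op \cdot n$ of a co-monic and a monic drawn from $\inj(\Aff\cb_2)^{+1}$, modulo the equations of both factors amalgamated over $\Iso(\Aff\cb_2)^{+1}$ together with the single distributive equation~(\ref{zero.five}). The target $\Par\Iso(\Aff\Fin\Vect(\F_2))^*$ is by definition spans with both legs monic in the augmented category $\Aff\Fin\Vect(\F_2) \cup \{\emptyset\}$, so the ``co-monic then monic'' data is structurally the same on both sides, and the proof will amount to matching these data via a normal form.

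First I would verify that Definition~\ref{def:parisoaffcb} actually produces a well-defined distributive law, as promised in Remark~\ref{rem:parisoaffcb}. Concretely one needs that pairs of monics admit pullbacks in the augmented category, and these always exist: two affine subspace inclusions into a common affine space either meet in a nonempty affine subspace (handled by the distributive law governing affine isomorphisms inside $\Iso(\Aff\cb_2)$) or have empty intersection (handled by the $\pi$ generator of Definition~\ref{def:isoaffcbzero} together with equations~(\ref{zero.one})--(\ref{zero.four})). Equation~(\ref{zero.five}) expresses the additional fact that any map factoring through the empty object collapses to the canonical zero, which is precisely what is needed for uniqueness of the empty-intersection pullback.

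Second I would construct the interpretation functor $\llbracket - \rrbracket : \ParIso(\Aff\cb_2) \to \Par\Iso(\Aff\Fin\Vect(\F_2))^*$ by extending the interpretations already fixed for $\inj(\Aff\cb_2)^{+1}$ and its opposite, sending the $\pi$ generator to the canonical empty-apex span and formal composition to span-pullback composition. Soundness of all defining equations is a direct calculation in the target. Fullness is immediate from the normal form for $\inj(\Aff\cb_2)^{+1}$: any span of monics with nonempty apex $\F_2^k$ is the composite of a co-monic $a^\op$ and a monic $b$, each representable; any span with empty apex is a composite of a $\pi$-costate with a $\pi$-state.

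The main obstacle will be faithfulness of $\llbracket - \rrbracket$, which is the content that two syntactic representatives of the same semantic span are equated by the defining relations. For spans with nonempty apex, the key input is that any affine isomorphism between two apex representatives is realized syntactically by the completeness result for $\Iso(\Aff\cb_2)^{+1}$, and this syntactic isomorphism can be slid along both monic legs using the amalgamation over $\Iso(\Aff\cb_2)^{+1}$, so that the two representatives become manifestly equal. For spans with empty apex, the collapse equations~(\ref{zero.one})--(\ref{zero.five}) force every syntactic diagram containing an intermediate $\pi$ to reduce to a canonical normal form, namely a single $\pi$ tensored with the monic legs on each side; any other such diagram is equal to this form by repeated application of the absorption and collapse rules. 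Combining the two cases yields the required bijection on hom-sets and hence the asserted isomorphism of props.
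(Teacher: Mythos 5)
Your proposal follows essentially the same route as the paper: the evident functor is an isomorphism on objects and full, faithfulness on diagrams with nonempty apex is handled by sliding a syntactic isomorphism of apexes along the two monic legs through the amalgamation over $\Iso(\Aff\cb_2)^{+1}$ (the same argument the paper invokes from the linear case, Lemma \ref{lem:parisocb}), and the empty-apex case is settled by the unique $\pi$-normal form matching the unique empty span; your extra verification of the distributive law is what the paper relegates to Remark \ref{rem:parisoaffcb}.
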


\begin{proof}
The obvious functor $\ParIso(\Aff\cb_2)\to \Par\Iso(\Aff\Fin\Vect(\F_2))^*$ is clearly full,  as well as an isomorphism on objects.
It remains to show it is faihful.  It is faithful on maps which are taken to spans with nonempty apex by the same argument as Lemma \ref{lem:parisocb}. For empty case, there is exactly one diagram of each type with a factor of $0$; and similarly, there is exactly one span with an empty apex.
\end{proof}

By \cite{cnot}  in this the identities of Definition \ref{def:isoaffcbzero}
 can be replaced by the following identity, while maintaining completeness:
\hspace*{.4cm}
$
\begin{tikzpicture}
	\begin{pgfonlayer}{nodelayer}
		\node [style=X] (0) at (0, 5) {$\pi$};
		\node [style=none] (1) at (0.5, 5.75) {};
		\node [style=none] (2) at (0.5, 4.25) {};
	\end{pgfonlayer}
	\begin{pgfonlayer}{edgelayer}
		\draw (2.center) to (1.center);
	\end{pgfonlayer}
\end{tikzpicture}
\eq{zero.six}
\begin{tikzpicture}
	\begin{pgfonlayer}{nodelayer}
		\node [style=X] (0) at (0, 5) {$\pi$};
		\node [style=none] (1) at (0.5, 5.75) {};
		\node [style=none] (2) at (0.5, 4.25) {};
		\node [style=X] (3) at (0.5, 5.25) {$\pi$};
		\node [style=X] (4) at (0.5, 4.75) {$\pi$};
	\end{pgfonlayer}
	\begin{pgfonlayer}{edgelayer}
		\draw (3) to (1.center);
		\draw (4) to (2.center);
	\end{pgfonlayer}
\end{tikzpicture}
$

%Using the identities presented in \cite{cnot}, this has a more compact presentation given in Appendix \ref{subsubsec:presentations:three:pinj}.
%This alternative form is much more in the aesthetic vein of the ZX-calculus,

%\begin{corollary}
%The prop $\ParIso(\Aff\cb_2)$ has a finite presentation with axioms where the axioms are the union of the axioms for $\inj(\Aff\cb_2),\inj(\Aff\cb_2)^\op$, the law $\eta_X\eta_X^\op=1_I$ as well as the $\pi$-law, replacing the universally quantified law for the 0 scalar:
%$$
%\begin{tikzpicture}
%	\begin{pgfonlayer}{nodelayer}
%		\node [style=none] (0) at (-1, -7) {};
%		\node [style=X] (1) at (-0.5, -7.75) {$\pi$};
%		\node [style=none] (2) at (-1, -8.5) {};
%	\end{pgfonlayer}
%	\begin{pgfonlayer}{edgelayer}
%		\draw (0.center) to (2.center);
%	\end{pgfonlayer}
%\end{tikzpicture}
%=
%\begin{tikzpicture}
%	\begin{pgfonlayer}{nodelayer}
%		\node [style=none] (0) at (-1, -7) {};
%		\node [style=X] (1) at (-0.5, -7.75) {$\pi$};
%		\node [style=X] (2) at (-1, -7.5) {};
%		\node [style=X] (3) at (-1, -8) {};
%		\node [style=none] (4) at (-1, -8.5) {};
%	\end{pgfonlayer}
%	\begin{pgfonlayer}{edgelayer}
%		\draw (0.center) to (2);
%		\draw (3) to (4.center);
%	\end{pgfonlayer}
%\end{tikzpicture}
%$$
% 
%
%\end{corollary}

\begin{definition}

Let $\pr\Aff\cb_2$ denote the pushout of the diagram of props:
$$
\pr\iso\Aff\cb_2 \leftarrow \surj^\op \rightarrow \cm^\op
$$

\end{definition}

\begin{lemma}
\label{lem:paraffcb}

$\pr\Aff\cb_2$ is a presentation for the prop $(\Par(\Aff\Fin\Vect(\F_2))^*,+)$.
\end{lemma}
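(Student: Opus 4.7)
The plan is to adapt the strategy from the proof of Lemma \ref{lem:parcb} in the linear case, accounting for the fact that the apex of a partial map can still be empty even when the source and target affine spaces are nonempty, which is why the zero generator of Definition \ref{def:isoaffcbzero} was added.

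First, I would construct an interpretation functor $F\colon \pr\Aff\cb_2 \to (\Par(\Aff\Fin\Vect(\F_2))^*, +)$ via the universal property of the pushout. By Lemma \ref{lem:parisoaffcb}, the semantics of $\pr\iso\Aff\cb_2$ as partial isomorphisms is already established, and the prop $\cm^\op$ interprets as the diagonal map together with the terminal projection onto $\F_2^0$. These agree on the common sub-prop $\surj^\op$ because the diagonal $V \to V \oplus V$ is simultaneously the comonoid copy and a monic in $\Aff\Fin\Vect(\F_2)$, so $F$ is well-defined.

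For fullness, I would take an arbitrary partial map realised by a span $V \xleftarrowtail{i} U \xrightarrow{g} W$ with $i$ monic. When $U$ is nonempty, I factor $g$ as an epi followed by a mono in $\Aff\Fin\Vect(\F_2)$, giving $g = m \circ e$; the pair $(i, m)$ is then a diagram in $\pr\iso\Aff\cb_2$, and post-composing with a diagrammatic representative of $e$ (built from counits, isomorphisms, and $\pi$-constants on the $\cm^\op$ side) yields the desired diagram. The empty-apex case is covered by the zero generator tensored with identities.

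Faithfulness is the main obstacle. I would establish a normal-form theorem: every diagram in $\pr\Aff\cb_2$ can be rewritten as a three-layer composite, with a diagonal-and-counit layer from $\cm^\op$ on the bottom, a partial isomorphism from $\pr\iso\Aff\cb_2$ in the middle, and a codiagonal layer from $(\cm^\op)^\op$-style generators on top. The distributive law of Definition \ref{def:parisoaffcb} lets us push monics past co-monics, while the pushout cube technique of Zanasi \cite{zanasi} handles the commutation of $\cm^\op$ with both sides of that distributive law simultaneously, reducing every diagram to this layered form. The empty-apex subtlety requires checking that diagrams containing a $\pi$-state reduce by equations \eref{zero.one}--\eref{zero.five} to a single canonical form per pair of types, matching the unique empty partial map; two normal forms mapping under $F$ to the same partial map must then coincide on the nose.
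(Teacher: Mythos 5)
Your overall architecture (induce $F$ from the universal property of the pushout, check fullness by factoring a partial map through its graph, then fight for faithfulness) is reasonable, but it diverges from the paper and the faithfulness step has a genuine gap. The paper does not prove a normal-form theorem at all: it observes that $\pr\iso\Aff\cb_2\cong(\Par\Iso(\Aff\Fin\Vect(\F_2))^*,+)$ is a \emph{discrete inverse category} (citing \cite[Prop.~3.4]{cnot}), that the pushout along $\surj^\op\to\cm^\op$ is exactly the free counital completion of that discrete inverse category, and that by \cite[Lem.~3.5]{zxa} the counital completion coincides with the Cartesian (restriction) completion, which for the partial-isomorphism category is precisely $\Par(\Aff\Fin\Vect(\F_2))^*$. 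All of the combinatorial difficulty you are trying to absorb into a rewriting argument is discharged by that one abstract fact; the pushout cube is then only used to induce the comparison map and transport the isomorphisms, with fullness and faithfulness handled ``as in the linear case.''

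The gap in your version is that the normal-form theorem is asserted rather than proved, and it is exactly where the content of the lemma lives. Saying that the distributive law of Definition~\ref{def:parisoaffcb} ``lets us push monics past co-monics'' and that Zanasi's cube ``handles the commutation of $\cm^\op$ with both sides simultaneously'' mischaracterises both tools: the distributive law only organises $\pr\iso\Aff\cb_2$ itself, it says nothing about how the freely adjoined counit interacts with the codiagonal and with the $\pi$-generators, and the cube technique is a bookkeeping device for universal maps, not a rewriting procedure. You would need to prove termination/confluence (or at least existence of a reduction to normal form together with semantic distinctness of distinct normal forms), including the interaction of the counit with \eref{zero.one}--\eref{zero.five}. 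Moreover the three-layer shape you propose (diagonal-and-counit layer, partial isomorphism, codiagonal layer) is the normal form appropriate to the span category of Lemma~\ref{lem:spanaffcb}; for $\Par(\Aff\Fin\Vect(\F_2))^*$ the correct target is a partial isomorphism followed by a layer of counits (discards), reflecting the factorisation of a total affine map through its graph. Either carry out that normal-form argument in full, or replace it by the discrete-inverse-category route the paper uses, which is both shorter and already supplies the uniqueness you need.
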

The proof is contained in \S \ref{proof:lem:paraffcb}. 
\begin{comment}
$\pr\Aff\cb_2$ has a particularly elegant presentation given in \S \ref{subsubsec:presentations:two:par}, which is much more in the spirit of the ZX-calculus.
\end{comment}

\begin{definition}
Let $\sp\Aff\cb_2$ denote the pushout of the diagram of props:
$$
 \pr\Aff\cb_2^\op \leftarrow \pr\iso\Aff\cb_2 \rightarrow \pr\Aff\cb_2
$$

\end{definition}

\begin{lemma}
\label{lem:spanaffcb}
$\sp\Aff\cb_2$ is a presentation for the prop $(\Span(\Aff\Fin\Vect(\F_2))^*,+)$.
\end{lemma}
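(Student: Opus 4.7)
The plan is to invoke \cite[Lem. 4.3]{zxa} in the same manner as the proof of Lemma~\ref{lem:spancb}. That lemma gives a general recipe: whenever one has a suitable category $\Y$ together with prop-level presentations of $\Par(\Y)$, $\Par(\Y)^\op$, and $\Par\Iso(\Y)$ that agree on the shared sub-prop of partial isomorphisms, the pushout of props $\Par(\Y)^\op \leftarrow \Par\Iso(\Y) \to \Par(\Y)$ presents $(\Span(\Y),+)$. Morally, this expresses the fact that every span decomposes (uniquely up to the span equivalence) as a dual-partial-map followed by a partial-map sharing a common apex.

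Applying this template with $\Y := (\Aff\Fin\Vect(\F_2))^*$, Lemma~\ref{lem:parisoaffcb} identifies $\pr\iso\Aff\cb_2$ with $\Par\Iso(\Y)$, and Lemma~\ref{lem:paraffcb} identifies $\pr\Aff\cb_2$ with $\Par(\Y)$, so that $\pr\Aff\cb_2^\op$ correspondingly presents the dual-partial-maps. The pushout $\sp\Aff\cb_2$ therefore presents $(\Span(\Y),+)$. Concretely, the induced prop morphism $\sp\Aff\cb_2 \to (\Span(\Y),+)$ is full because any span $X \xleftarrow{\ell} A \xrightarrow{r} Y$ in $\Y$ (with $A$ possibly empty) can be written by first forming the partial map $A \rightharpoonup Y$ via $r$ and the dual partial map $X \rightharpoondown A$ via $\ell$, and glued as a composite through $\Par\Iso(\Y)$; and faithful by reducing an arbitrary diagram to the normal form supplied by this decomposition using only the three constituent presentations together with the pushout identification along $\pr\iso\Aff\cb_2$.

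The substantive issue — and the main obstacle — is that $\Y$ is not closed under pullback: two nonempty affine subspaces of $\F_2^n$ need not intersect. This is exactly why we enlarged the groupoid of isomorphisms in Definition~\ref{def:isoaffcbzero} to $\Iso(\Aff\cb_2)^{+1}$ by adjoining the empty-apex partial isomorphism, and propagated it into $\pr\iso\Aff\cb_2$ in Definition~\ref{def:parisoaffcb}. To apply \cite[Lem. 4.3]{zxa} one must verify that this enlarged partial-isomorphism prop still supplies the correct apex for any pullback-shaped diagram in $\Y$, which reduces to checking that diagrams containing the $\pi$-state at type $0$ admit a unique normal form; this is handled by the absorption equations (\ref{zero.one})--(\ref{zero.five}), which collapse any composite touching the empty apex to the unique span with empty apex of the appropriate type.

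For the non-empty case the argument is essentially identical to that of Lemma~\ref{lem:spancb}, since the only novelty of the affine fragment — the $\pi$-phase — interacts with the Frobenius and bialgebra structure in a way already axiomatised at the level of $\pr\Aff\cb_2$ and $\pr\iso\Aff\cb_2$. With the empty case absorbed as above, the remaining verification is bookkeeping on the pushout diagram of presentations.
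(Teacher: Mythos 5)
Your proposal is correct and follows essentially the same route as the paper: the paper also forms the pushout cube from Lemmas~\ref{lem:parisoaffcb} and~\ref{lem:paraffcb}, checks that the induced universal map into $(\Span(\Aff\Fin\Vect(\F_2))^*,+)$ is identity-on-objects, full and faithful, and treats the empty-apex spans as a separate case (where it simply observes that the only endomorphism of $\emptyset$ is the identity, so isomorphic empty-apex spans are equal on the nose). The only difference is one of emphasis: the paper proves fullness and faithfulness of the universal map directly for this lemma rather than routing the gluing step through \cite[Lem.~4.3]{zxa}, but the content of your normal-form/absorption argument for the empty apex is the syntactic counterpart of the paper's semantic observation.
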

The proof is contained in \S \ref{proof:lem:spanaffcb}.
\begin{comment}
There is a particularly elegant equivalent presentation given in \S \ref{subsubsec:presentations:two:span}.
\end{comment}
This is almost equivalent to the presentation given in \cite{affine} which gives a presentation for the full subcategory of relations of finite dimensional affine vector spaces where the objects are given by the nonempty vector spaces, and is much more in the spirit of the ZX-calculus.

\section{The and gate}
\label{sec:three}

Recall that unlike when the tensor product is the coproduct; when the tensor product is induced by the multiplication, to obtain a prop, one must consider the subcategory generated by tensoring a fixed object with itsef.
%
%Because $\sum_n 1=n$ grows linearly in $n$ and $\prod_n k = k^n$ grows exponentially, giving presentations for multiplicative models will be much more involved because there are more points to deal with, and in particular, more subobjects arise by pullback.

\begin{definition}
Let $L_{\F_2^\times}$ be the prop generated by quotienting $\cb$ by the equation:
\!
$
\begin{tikzpicture}
	\begin{pgfonlayer}{nodelayer}
		\node [style=none] (0) at (-7, 1) {};
		\node [style=none] (1) at (-7, 0.5) {};
		\node [style=Z] (2) at (-7, -0.25) {};
		\node [style=none] (3) at (-7, -0.75) {};
		\node [style=andin] (4) at (-7, 0.5) {};
	\end{pgfonlayer}
	\begin{pgfonlayer}{edgelayer}
		\draw (3.center) to (2.center);
		\draw [in=-60, out=60, looseness=1.25] (2.center) to (1);
		\draw [in=120, out=-120, looseness=1.25] (1) to (2.center);
		\draw (1) to (0.center);
	\end{pgfonlayer}
\end{tikzpicture}
\eq{antispecial}
\begin{tikzpicture}
	\begin{pgfonlayer}{nodelayer}
		\node [style=none] (0) at (-7, 1) {};
		\node [style=none] (1) at (-7, -0.75) {};
	\end{pgfonlayer}
	\begin{pgfonlayer}{edgelayer}
		\draw (1.center) to (0.center);
	\end{pgfonlayer}
\end{tikzpicture}
$

Where the components of the  monoid are relabled as follows:
\hspace*{.5cm}
$
\left(
\begin{tikzpicture}
	\begin{pgfonlayer}{nodelayer}
		\node [style=none] (0) at (-3.75, 0.5) {};
		\node [style=none] (1) at (-3.75, -0.25) {};
		\node [style=andin] (2) at (-3.75, -0.25) {};
		\node [style=none] (3) at (-4, -1) {};
		\node [style=none] (4) at (-3.5, -1) {};
	\end{pgfonlayer}
	\begin{pgfonlayer}{edgelayer}
		\draw (0.center) to (1.center);
		\draw [in=-60, out=90, looseness=1.00] (4.center) to (1.center);
		\draw [in=90, out=-120, looseness=1.00] (1.center) to (3.center);
	\end{pgfonlayer}
\end{tikzpicture},
\begin{tikzpicture}
	\begin{pgfonlayer}{nodelayer}
		\node [style=none] (0) at (-3.75, -0.25) {};
		\node [style=X] (1) at (-3.75, -1) {$\pi$};
	\end{pgfonlayer}
	\begin{pgfonlayer}{edgelayer}
		\draw (0.center) to (1);
	\end{pgfonlayer}
\end{tikzpicture}
\right)
$

\end{definition}

\begin{lemma}
$L_{\F_2}^\times$ is a presentation for the Lawvere theory for the group of units of the field $\F_2$.
\end{lemma}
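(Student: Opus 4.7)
My plan is to leverage the folklore fact, recalled earlier in the excerpt, that $\cb$ presents $(\Mat(\N), +)$. Under this isomorphism, the left-hand side of \eref{antispecial} — Z-comultiplication followed by X-multiplication — translates into the $1 \times 1$ matrix obtained by composing the row vector $\begin{bmatrix} 1 & 1 \end{bmatrix}$ with the column vector $\begin{bmatrix} 1 \\ 1 \end{bmatrix}$, namely $[2]$, while the identity on $1$ is $[1]$. Thus \eref{antispecial} enforces the scalar relation $2 = 1$, which collapses $\N$ to the Boolean semiring $\mathbb{B} = \{0,1\}$ with $1 + 1 = 1$. Hence $L_{\F_2^\times} \cong (\Mat(\mathbb{B}), +)$ as props.

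Next, I would identify this quotient with the Lawvere theory for the group of units of $\F_2$. Under the relabeling, the X-monoid becomes the multiplicative monoid of $\F_2$, with $\pi$ interpreted as the multiplicative unit $1$, while the Z-comonoid remains the copy/delete comonoid. The scalars $\mathbb{B} = \{0, 1\} = \F_2^\times \cup \{0\}$ encode the trivial group of units together with its absorbing zero, and the equation \eref{antispecial} records the idempotency $x \cdot x = x$, which holds in $\F_2$ on the nose. The bialgebra axioms inherited from $\cb$ express compatibility of multiplication with copying and deletion, and are standard for any commutative monoid interacting multiplicatively with a diagonal structure.

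The main step is verifying that the interpretation functor $L_{\F_2^\times} \to (\Mat(\mathbb{B}), +)$ is an isomorphism: fullness follows because every Boolean matrix decomposes into the copy/AND/$\pi$/delete generators via the standard ``copy-weight-add'' normal form available in any bicommutative bialgebra, and faithfulness follows immediately from the universal property of the quotient $(\Mat(\N), +) \twoheadrightarrow (\Mat(\mathbb{B}), +)$, since no relations beyond \eref{antispecial} are required to pass from $\N$ to $\mathbb{B}$.

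The main obstacle is pinning down precisely what is meant by ``the Lawvere theory for the group of units of the field $\F_2$''; once one commits to the identification of this theory with $(\Mat(\mathbb{B}), +)$ — which is justified by the fact that $\mathbb{B}$ is exactly the monoid ring (over itself) of the trivial group $\F_2^\times = \{1\}$ extended by an absorbing zero — the remainder of the argument is a straightforward invocation of the earlier lemma presenting $\cb$ as $(\Mat(\N), +)$.
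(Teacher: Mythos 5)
The paper states this lemma with no proof at all, so there is no argument of the author's to compare against; judged on its own, your proposal is essentially the expected one, and it runs exactly parallel to the paper's one-sentence justification that $\cb_2$ presents $(\Mat(\F_2),+)$: there the Hopf law imposes the scalar congruence $2\equiv 0$ on $\cb\cong(\Mat(\N),+)$, while here the relation (\ref{antispecial}) imposes $2\equiv 1$, whose semiring quotient is the Boolean semiring $\Bool$, giving $L_{\F_2^\times}\cong(\Mat(\Bool),+)$. Your reading of the statement's target is also the right one: after the relabelling, maps $n\to m$ are $m$-tuples of multilinear monomials, i.e.\ the term operations of the multiplicative monoid $\F_2^\times\cup\{0\}=(\{0,1\},\cdot,1)$, and this is precisely the role $L_{\F_2^\times}$ has to play in the subsequent distributive law defining $\f_2$, where monomials are pushed past $\cb_2$ to yield algebraic normal forms of Boolean functions.

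Two steps deserve tightening. First, your faithfulness claim is asserted rather than proved: the universal property of the semiring quotient $\N\twoheadrightarrow\Bool$ does not by itself say anything about the prop congruence on $\Mat(\N)$ generated by the single $1\to 1$ equation $[2]=[1]$; what must be checked is that this congruence already identifies any two $\N$-matrices with the same image over $\Bool$. This is true, but the argument is the standard graphical one rather than an ``immediate'' consequence: in the copy/multiply normal form for $\cb$, the $(j,i)$ entry $k$ appears as $k$ parallel wires between the $i$-th $Z$ node and the $j$-th relabelled multiplication node, and (co)associativity together with (\ref{antispecial}) collapses two parallel wires to one, so every positive entry reduces to $1$ (equivalently, the scalar reduction $[k]\sim[k-1]$ for $k\geq 2$ applied entrywise). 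Second, your gloss that $\Bool$ is ``the monoid ring over itself of the trivial group with an absorbing zero adjoined'' is circular as a justification for the phrase in the lemma; it is cleaner to observe that in the multiplicative reading (\ref{antispecial}) is exactly $x\cdot x=x$, so morphisms $n\to 1$ are the multilinear monomials, i.e.\ the Lawvere theory of the variety generated by the multiplicative monoid of $\F_2$, which is determined by its (trivial) group of units --- this is evidently what the author intends. With those two points made explicit, your proof is correct and at least as detailed as the paper's treatment of the analogous quotient lemmas.
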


\begin{definition}
Consider the prop $\f_2$, generated by the distributive law:
$$
L_{\F_2^\times} \otimes_{\cm^\op} \cb_2;
\begin{tikzpicture}
	\begin{pgfonlayer}{nodelayer}
		\node [style=andin] (4) at (1.25, 0.5) {};
		\node [style=X] (5) at (0.75, -0.5) {};
		\node [style=none] (6) at (0.5, -1) {};
		\node [style=none] (7) at (1, -1) {};
		\node [style=none] (8) at (1.75, -1) {};
		\node [style=none] (9) at (1.25, 0.5) {};
		\node [style=none] (10) at (1.25, 1.5) {};
	\end{pgfonlayer}
	\begin{pgfonlayer}{edgelayer}
		\draw [in=-30, out=90] (8.center) to (9.center);
		\draw [in=90, out=-150] (9.center) to (5);
		\draw [in=90, out=-45] (5) to (7.center);
		\draw [in=-135, out=90] (6.center) to (5);
		\draw (9.center) to (10.center);
	\end{pgfonlayer}
\end{tikzpicture}
\eq{ring.mul}
\begin{tikzpicture}
	\begin{pgfonlayer}{nodelayer}
		\node [style=none] (0) at (1, 0) {};
		\node [style=none] (1) at (0.5, -1.25) {};
		\node [style=none] (2) at (1.75, -0.75) {};
		\node [style=none] (3) at (1.33, 0.75) {};
		\node [style=andin] (4) at (1, 0) {};
		\node [style=none] (5) at (1.75, 0) {};
		\node [style=none] (6) at (1, -1.25) {};
		\node [style=none] (7) at (1.75, -0.75) {};
		\node [style=none] (8) at (1.33, 0.75) {};
		\node [style=andin] (9) at (1.75, 0) {};
		\node [style=X] (10) at (1.33, 0.75) {};
		\node [style=none] (11) at (1.33, 1.25) {};
		\node [style=none] (12) at (1.75, -1.25) {};
		\node [style=Z] (13) at (1.75, -0.75) {};
	\end{pgfonlayer}
	\begin{pgfonlayer}{edgelayer}
		\draw [in=-135, out=90] (0.center) to (3.center);
		\draw [in=165, out=-30, looseness=1.25] (0.center) to (2.center);
		\draw [in=-45, out=90] (5.center) to (8.center);
		\draw [in=45, out=-45, looseness=1.25] (5.center) to (7.center);
		\draw (10) to (11.center);
		\draw [in=90, out=-150] (4) to (1.center);
		\draw [in=-150, out=90] (6.center) to (9);
		\draw (12.center) to (13);
	\end{pgfonlayer}
\end{tikzpicture},
\hspace*{.5cm}
\begin{tikzpicture}
	\begin{pgfonlayer}{nodelayer}
		\node [style=none] (0) at (2, 0) {};
		\node [style=none] (1) at (1.75, -0.75) {};
		\node [style=none] (2) at (2.25, -0.75) {};
		\node [style=none] (3) at (2, 0.5) {};
		\node [style=none] (4) at (2.25, -1) {};
		\node [style=X] (5) at (1.75, -0.75) {};
		\node [style=andin] (6) at (2, 0) {};
	\end{pgfonlayer}
	\begin{pgfonlayer}{edgelayer}
		\draw (0.center) to (3.center);
		\draw [in=90, out=-45] (0.center) to (2.center);
		\draw (4.center) to (2.center);
		\draw [in=-135, out=90] (1.center) to (0.center);
	\end{pgfonlayer}
\end{tikzpicture}
\eq{ring.unit}
\begin{tikzpicture}
	\begin{pgfonlayer}{nodelayer}
		\node [style=none] (12) at (2, 0.5) {};
		\node [style=none] (14) at (2, -1) {};
		\node [style=X] (15) at (2, 0) {};
		\node [style=Z] (16) at (2, -0.5) {};
	\end{pgfonlayer}
	\begin{pgfonlayer}{edgelayer}
		\draw (15) to (12.center);
		\draw (16) to (14.center);
	\end{pgfonlayer}
\end{tikzpicture}
$$

\end{definition}

\begin{lemma} \cite[Thm. 10]{lafont}
$\f_2$ is a presentation for the prop $(\FSets_2,\times)$.
\end{lemma}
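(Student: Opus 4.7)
The plan is to exhibit the interpretation functor $[\![ \cdot ]\!]: \f_2 \to (\FSets_2, \times)$ and show it is an isomorphism of props, leveraging the distributive-law factorization and the uniqueness of the Algebraic Normal Form (ANF) of Boolean functions. On generators, send the and-gate to Boolean conjunction $\wedge: \{0,1\}^2 \to \{0,1\}$, $\pi$ to the constant $1: \{0,1\}^0 \to \{0,1\}$, the $Z$-comonoid from $\cb_2$ to the Cartesian copy/delete on $\{0,1\}$, and the $X$-monoid from $\cb_2$ to $\F_2$-addition $\oplus$ with unit $0$. The first task is to check that this assignment respects all defining equations: those of $\cb$, the quotient $2 \equiv 0$ defining $\cb_2$, the ``antispecial'' idempotency $(\ref{antispecial})$, and the two distributive-law equations $(\ref{ring.mul})$ and $(\ref{ring.unit})$. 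These verifications are pointwise checks on $\{0,1\}$ using the identities $x \wedge x = x$, $x \oplus x = 0$, $x \wedge 0 = 0$, $x \wedge 1 = x$, and the distributivity of $\wedge$ over $\oplus$ in $\F_2$.

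Next I would establish fullness. Every function $f: \{0,1\}^n \to \{0,1\}^m$ decomposes as $m$ Boolean functions $f_i: \{0,1\}^n \to \{0,1\}$, and each such $f_i$ admits an ANF expansion
\[
f_i(x_1, \ldots, x_n) \;=\; \sum_{S \subseteq \{1,\ldots,n\}} c_S^{(i)} \prod_{j \in S} x_j \qquad (c_S^{(i)} \in \F_2).
\]
Such an expansion is realized by a diagram that first copies each $x_j$ sufficiently many times using the $Z$-comonoid of $\cb_2$, forms every required monomial $\prod_{j \in S} x_j$ via iterated applications of the and-gate (using $\pi$ for the empty monomial $1$), XORs the selected monomials together using the $X$-spider of $\cb_2$, and dispatches the $m$ outputs. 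Hence every function of $\FSets_2$ lies in the image of $[\![ \cdot ]\!]$.

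For faithfulness, I would invoke the distributive law. By the distributive-law lemma, every map of $\f_2$ admits a factorization as a $\cb_2$-morphism followed by an $L_{\F_2^\times}$-morphism, with the remaining freedom governed precisely by the pushing equations $(\ref{ring.mul})$ and $(\ref{ring.unit})$ together with $\cm^\op$-coherence. Under the semantics, the $\cb_2$-factor is an $\F_2$-linear map describing the combination of monomials, while the $L_{\F_2^\times}$-factor selects which products of coordinates are formed; together they realize exactly an ANF. Since the ANF of a Boolean function is \emph{unique}, two diagrams with equal interpretation must share the same normal form, so $[\![ \cdot ]\!]$ is faithful.

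The main obstacle is verifying that the pair of equations $(\ref{ring.mul})$ and $(\ref{ring.unit})$ actually yields a valid distributive law of props in the sense of Lack, i.e.\ that they suffice to push every generator of $L_{\F_2^\times}$ past every generator of $\cb_2$ coherently up to $\cm^\op$. Once this is in hand, the derived factorization combined with ANF uniqueness closes the argument; the remaining checks are routine diagrammatic manipulations analogous to the phase-free case treated in Section \ref{sec:one}.
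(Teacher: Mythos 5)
There is nothing in the paper to compare your argument against: the lemma is imported verbatim from Lafont (cited as Thm.~10), and the paper supplies no internal proof. Judged on its own terms, your sketch takes the natural normal-form route (interpretation functor, soundness by pointwise checks, fullness via algebraic normal form, faithfulness via the factored form coming from the distributive law), which is the standard strategy for results of this kind. However, two essential steps are missing rather than merely routine. First, you yourself defer the verification that (\ref{ring.mul}) and (\ref{ring.unit}) actually constitute a distributive law over $\cm^\op$, i.e.\ that every composite of a $\cb_2$-generator with an $L_{\F_2^\times}$-generator can be coherently pushed into the factored form. In Lack's framework this is precisely what licenses presenting $\f_2$ by the union of the equations plus the exchange equations, and it is where the substantive diagrammatic work lives (compare the inductive arguments the paper carries out in Remarks \ref{rem:pariso:cb} and \ref{rem:parisoaffcb} for its other distributive laws); deferring it leaves the core of the proof open.

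Second, your faithfulness argument conflates two distinct uniqueness statements. Uniqueness of the ANF is a statement about Boolean functions; faithfulness requires uniqueness of the \emph{diagrammatic} normal form modulo the stated equations. To get that you additionally need (i) that the factor props are complete presentations of their semantic images --- the paper establishes this for $\cb_2\cong(\Mat(\F_2),+)$, but for $L_{\F_2^\times}$ it is only asserted --- and (ii) that the semantic factorisation of a map of $\FSets_2$ into a monomial-forming map followed by an $\F_2$-linear map is unique up to maps of the shared copying comonoid $\cm^\op$, since duplicated or permuted monomials are exactly the ambiguity the middle prop absorbs. As written, the step ``equal interpretation implies same normal form'' assumes what completeness asserts. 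A smaller slip: the rewriting encoded by (\ref{ring.mul}) pushes additions past and-gates so that the normal form has the $L_{\F_2^\times}$ (product-forming) part first and the $\cb_2$ (linear) part second --- sum of products, reading bottom to top --- whereas you state the factorisation in the opposite order while describing the semantics of the correct one.
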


Therefore in some sense, we are justified in thinking of this prop $(\FSets_2,\times)$ as a sort of categorification of boolean polynomials.

%\begin{proof}
%The generators have the following interpretations in $\Sets_2$;  $X$ corresponds to addition:
%
%$$
%|a,b\rangle \xmapsto{\llbracket \mu_X \rrbracket} | a+b\rangle
%\hspace*{.5cm}
%* \xmapsto{\llbracket \mu_X \rrbracket} | 0 \rangle
%$$
%
%$\&$ corresponds to multiplication:
%
%$$
%|a,b\rangle \xmapsto{\llbracket \mu_\& \rrbracket} | a\cdot b\rangle
%\hspace*{.5cm}
%* \xmapsto{\llbracket\mu_X \rrbracket} | 1 \rangle
%$$
%
%
%The two bicommutative bialgebra laws correspond to the commutation of multiplication and addition with copying; and the monad map corresponds to the fact that multiplication distributes over addition.  Therefore, this is just the presentation of the ring $\Z_2$ as a Lawvere theory. 
%\end{proof}

To find larger fragments, it will be useful to first identify the isomorphisms and the monics of $\f_2$.

\begin{definition}
Given a map $f$ in  $\f_2$, the {\bf oracle} for $f$, ${\mathcal O}_f$ is defined as follows:
$$
\begin{tikzpicture}
	\begin{pgfonlayer}{nodelayer}
		\node [style=Z] (0) at (0.75, 0.25) {};
		\node [style=X] (1) at (1.5, 2.25) {};
		\node [style=map] (2) at (1, 1.5) {$f$};
		\node [style=none] (3) at (0.5, 2.75) {};
		\node [style=none] (4) at (1.5, 2.75) {};
		\node [style=none] (5) at (1.5, -0.25) {};
		\node [style=none] (6) at (0.75, -0.25) {};
		\node [style=Z] (7) at (-0.25, 0.25) {};
		\node [style=none] (8) at (-0.5, 2.75) {};
		\node [style=none] (9) at (-0.25, -0.25) {};
		\node [style=none] (10) at (0.25, 0) {$\cdots$};
		\node [style=none] (11) at (0, 2.5) {$\cdots$};
	\end{pgfonlayer}
	\begin{pgfonlayer}{edgelayer}
		\draw (6.center) to (0);
		\draw [in=-60, out=60] (0) to (2);
		\draw [in=-120, out=90] (2) to (1);
		\draw (1) to (4.center);
		\draw [in=90, out=-60] (1) to (5.center);
		\draw [in=-90, out=120] (0) to (3.center);
		\draw (9.center) to (7);
		\draw [in=-90, out=120] (7) to (8.center);
		\draw [in=45, out=-120] (2) to (7);
	\end{pgfonlayer}
\end{tikzpicture}
$$

\end{definition}

\begin{lemma}
The oracles in $f_2$ are generated by the generalized controlled-not gates:
$$
\begin{tikzpicture}
	\begin{pgfonlayer}{nodelayer}
		\node [style=none] (0) at (1, -0.75) {};
		\node [style=X] (1) at (0.5, -0.75) {$\pi$};
		\node [style=none] (2) at (0.75, 0.75) {};
		\node [style=X] (3) at (0.75, 0) {};
		\node [style=none] (4) at (1, -1.25) {};
	\end{pgfonlayer}
	\begin{pgfonlayer}{edgelayer}
		\draw [in=-45, out=90, looseness=0.75] (0.center) to (3);
		\draw [in=90, out=-135, looseness=0.75] (3) to (1);
		\draw (3) to (2.center);
		\draw (4.center) to (0.center);
	\end{pgfonlayer}
\end{tikzpicture},
\hspace*{.5cm}
\begin{tikzpicture}[xscale=-1]
	\begin{pgfonlayer}{nodelayer}
		\node [style=X] (0) at (0.75, 0) {};
		\node [style=Z] (1) at (1.25, -0.5) {};
		\node [style=none] (2) at (0.5, -1) {};
		\node [style=none] (3) at (1.25, -1) {};
		\node [style=none] (4) at (1.5, 0.5) {};
		\node [style=none] (5) at (0.75, 0.5) {};
	\end{pgfonlayer}
	\begin{pgfonlayer}{edgelayer}
		\draw (5.center) to (0);
		\draw [in=150, out=-30] (0) to (1);
		\draw [in=-90, out=60, looseness=0.75] (1) to (4.center);
		\draw (1) to (3.center);
		\draw [in=90, out=-120, looseness=0.75] (0) to (2.center);
	\end{pgfonlayer}
\end{tikzpicture},
\hspace*{.5cm}
\begin{tikzpicture}
	\begin{pgfonlayer}{nodelayer}
		\node [style=Z] (0) at (-10.25, 0.25) {};
		\node [style=Z] (1) at (-11.25, 0.25) {};
		\node [style=none] (2) at (-10.75, 1) {};
		\node [style=X] (3) at (-9.75, 1.75) {};
		\node [style=none] (4) at (-11.25, -0.5) {};
		\node [style=none] (5) at (-10.25, -0.5) {};
		\node [style=none] (6) at (-9.75, -0.5) {};
		\node [style=none] (7) at (-9.75, 2.25) {};
		\node [style=none] (8) at (-10.25, 2.25) {};
		\node [style=none] (9) at (-11.25, 2.25) {};
		\node [style=andin] (10) at (-10.75, 1) {};
		\node [style=none] (11) at (-10.75, 2.25) {$n$};
		\node [style=none] (12) at (-10.75, 0.25) {$n$};
		\node [style=none] (13) at (-10.75, 2) {$\cdots$};
		\node [style=none] (14) at (-10.75, 0.5) {$\cdots$};
	\end{pgfonlayer}
	\begin{pgfonlayer}{edgelayer}
		\draw (4.center) to (1);
		\draw (1) to (2.center);
		\draw (2.center) to (0);
		\draw (0) to (5.center);
		\draw (6.center) to (3);
		\draw [in=90, out=-146, looseness=1.50] (3) to (2.center);
		\draw [in=-90, out=120, looseness=1.00] (1) to (9.center);
		\draw [in=-90, out=60, looseness=0.75] (0) to (8.center);
		\draw (3) to (7.center);
	\end{pgfonlayer}
\end{tikzpicture}
$$

%and the equations of Iwama et al, where $[n,X]$ denotes an $|X|$-controlled not gate controlled by the wires indexed by the set X, and targetting the wire $n \notin X$  \cite{Iwama} generalized b

%https://web.eecs.umich.edu/~imarkov/pubs/jour/tcad03-iwls.pdf

%\begin{description}
%\item $[x,X][x,X]=1$
%\item  When the target wire are the same $[x,X][x,Y] = [x,Y] [x,X]$
%\item  When $x \not\in Y$ and $y \not\in X$ then $[x,X][y,Y] = [y,Y] [x,X]$
%\item  $[x,X] [y,{\{ x\} \sqcup Y}] = [y,{X \cup Y}][y,{\{ x\} \sqcup Y}] [x,X]$
%\item For $x \neq y$, $[x,] $
%\end{description}
%
%
%TODO

\end{lemma}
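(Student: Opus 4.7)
The plan is to use the algebraic normal form (ANF) of Boolean functions. Any map $f : \F_2^n \to \F_2^m$ decomposes into coordinate functions $f_i : \F_2^n \to \F_2$, and each $f_i$ has a unique expansion as an $\F_2$-sum of monomials $\prod_{j \in S} x_j$ indexed by subsets $S \subseteq \{1,\dots,n\}$, where the empty monomial contributes the constant $1$. I want to realize the oracle $\mathcal{O}_f$ by assembling oracles for these monomials using the Frobenius structures available in $\f_2$.

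The first step is a pair of structural identities for the oracle construction. Sequential composition of oracles along the same wires adds the functions: for $f, g : \F_2^n \to \F_2^m$, I claim $\mathcal{O}_g \circ \mathcal{O}_f = \mathcal{O}_{f+g}$. Diagrammatically this is a slide: the blocks for $f$ and $g$ sit between the Z-copy wires feeding the inputs and two successive X-XORs on the target wires, so using cocommutativity of Z-copy to share inputs and associativity/commutativity of X-XOR on targets rewrites the configuration into a single oracle for $f+g$. In parallel, oracles for functions sharing inputs combine into one: for $(f_1, f_2) : \F_2^n \to \F_2^{m_1+m_2}$, one Z-copies each input, feeds the copies into $\mathcal{O}_{f_1}$ and $\mathcal{O}_{f_2}$ acting on disjoint target registers, and the result is $\mathcal{O}_{(f_1,f_2)}$.

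With these reductions, it suffices to exhibit oracles for single monomials $m_S(x) = \prod_{j \in S} x_j : \F_2^n \to \F_2$ and for the constant $1$. The third generator in the statement is by definition $\mathcal{O}_{m_{[n]}}$: the $n$ control wires are AND-ed together via the andin multiplication, and the result is XOR-ed into the target. The CNOT (second generator) is the case $n = 1$, that is $\mathcal{O}_{\mathrm{id}_{\F_2}}$. For a proper subset $S \subsetneq [n]$, the $n$-input oracle $\mathcal{O}_{m_S}$ factors through discarding on the indices not in $S$ (via Z-copy followed by the Z-counit), reducing to the $|S|$-controlled NOT. Finally, the constant monomial $1 : \F_2^0 \to \F_2$ has the NOT gate (first generator) as its oracle. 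Chaining these monomial oracles via the sequential identity builds $\mathcal{O}_{f_i}$ for each coordinate, and the parallel identity then assembles the full $\mathcal{O}_f$.

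The main obstacle is verifying the sequential composition identity rigorously inside $\f_2$: one has to juggle the Z-Frobenius (copy) structure and the X-Frobenius (XOR) structure along with the bialgebra and distributive-law equations of $\cb_2$ so that the two oracle blocks glue into one. Once this identity is in hand, the rest of the argument is a purely combinatorial translation of ANF into diagrams, and the three families of generators are manifestly sufficient.
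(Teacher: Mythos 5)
Your proposal is correct and follows essentially the same route as the paper: the paper's proof also reduces to the algebraic normal form of Boolean polynomials in $\F_2[x_1,\ldots,x_n]/\langle x_i^2-x_i\rangle$, identifies each product (monomial) with a generalized controlled-not gate, and realizes the sum by composing these gates in sequence. You merely make explicit the auxiliary identities ($\mathcal{O}_g \circ \mathcal{O}_f = \mathcal{O}_{f+g}$ and the parallel assembly of coordinate functions) that the paper leaves implicit.
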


\begin{proof}
Any  Boolean function of $n$ arguments can be represented by a polynomial in\\
 $\F_2[x_1,\ldots, x_n]/\langle x_1^2-x_1,\ldots x_1^2-x_1\rangle$.  Every polynomial in this quotient ring has a unique normal form given by sums of products (which is not true for arbitrary finite fields).  Each product corresponds to a generalized controlled-not gate, and the sum corresponds to composing these generalized controlled-not gates in sequence.
\end{proof}

In the quantum circuit notation the generalized controlled-not gates are drawn as follows (the first being the not gate, and the second being the controlled-not gate):
$$
\begin{tikzpicture}
	\begin{pgfonlayer}{nodelayer}
		\node [style=oplus] (0) at (0, 1.5) {};
		\node [style=none] (1) at (0, 2) {};
		\node [style=none] (2) at (0, 1) {};
	\end{pgfonlayer}
	\begin{pgfonlayer}{edgelayer}
		\draw (0) to (1.center);
		\draw (0) to (2.center);
	\end{pgfonlayer}
\end{tikzpicture}
,
\hspace*{.5cm}
\begin{tikzpicture}
	\begin{pgfonlayer}{nodelayer}
		\node [style=oplus] (0) at (0, 1.5) {};
		\node [style=none] (1) at (0, 2) {};
		\node [style=none] (2) at (0, 1) {};
		\node [style=none] (4) at (-0.5, 2) {};
		\node [style=none] (5) at (-0.5, 1) {};
		\node [style=dot] (6) at (-0.5, 1.5) {};
	\end{pgfonlayer}
	\begin{pgfonlayer}{edgelayer}
		\draw (0) to (1.center);
		\draw (0) to (2.center);
		\draw (0) to (6);
		\draw (6) to (4.center);
		\draw (6) to (5.center);
	\end{pgfonlayer}
\end{tikzpicture}
,
\hspace*{.5cm}
\begin{tikzpicture}
	\begin{pgfonlayer}{nodelayer}
		\node [style=oplus] (0) at (-0.25, 1.5) {};
		\node [style=none] (1) at (-0.25, 2) {};
		\node [style=none] (2) at (-0.25, 1) {};
		\node [style=none] (4) at (-0.75, 2) {};
		\node [style=none] (5) at (-0.75, 1) {};
		\node [style=dot] (6) at (-0.75, 1.5) {};
		\node [style=none] (7) at (-1.75, 2) {};
		\node [style=none] (8) at (-1.75, 1) {};
		\node [style=dot] (9) at (-1.75, 1.5) {};
		\node [style=none] (10) at (-1, 1.5) {};
		\node [style=none] (11) at (-1.5, 1.5) {};
		\node [style=none] (12) at (-1.25, 1.5) {$\cdots$};
		\node [style=none] (13) at (-1.25, 1.75) {$n$};
	\end{pgfonlayer}
	\begin{pgfonlayer}{edgelayer}
		\draw (0) to (1.center);
		\draw (0) to (2.center);
		\draw (0) to (6);
		\draw (6) to (4.center);
		\draw (6) to (5.center);
		\draw (9) to (7.center);
		\draw (9) to (8.center);
		\draw (11.center) to (9);
		\draw (10.center) to (6);
	\end{pgfonlayer}
\end{tikzpicture}
$$

%We will also allow generalized controlled not gates controlled and targetting arbitrary wires, possibly with gaps in the middle.

\begin{lemma}\cite[Thm. 5.1]{toffolireversible}
The prop generated by the oracles in $\f_2$ generate $\Iso(\f_2)$.
\end{lemma}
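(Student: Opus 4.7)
The plan is to derive the statement as an essentially immediate consequence of the preceding lemma together with the classical theorem on reversible Boolean computation cited as \cite[Thm. 5.1]{toffolireversible}. First I would translate the statement into classical terms: an isomorphism $f : n \to n$ in $\f_2$ is precisely a bijection $\{0,1\}^n \to \{0,1\}^n$, i.e.\ a reversible Boolean function on $n$ bits. By the previous lemma, the oracles of $\f_2$ are generated as a prop by the generalized multi-controlled NOT gates, so the content of the claim reduces to the assertion that every reversible Boolean function can be written as a composition of (tensors of) multi-controlled NOT gates.

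Next I would reduce the decomposition problem to that of realizing an arbitrary transposition $(a\;b)$ in $\mathrm{Sym}(\{0,1\}^n)$, since every element of that symmetric group factors as a product of transpositions. To implement such a transposition, the standard recipe is: conjugate by NOT gates on the appropriate coordinates to reduce to the case $a = 1^n$; then use an $(n{-}1)$-controlled NOT together with further (smaller-arity) multi-controlled NOTs to isolate the single two-cycle $(1^n\;b')$ from the larger permutation a single multi-controlled NOT performs; finally, undo the conjugating NOTs. Iterating this procedure over a factorization into transpositions realizes an arbitrary permutation, and tensoring by identities accounts for the full prop structure.

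The main obstacle is the combinatorial bookkeeping inside the reduction: a single multi-controlled NOT gate is not itself a transposition on $\{0,1\}^n$ but a product of many, and one must ensure that the various gates compose correctly to isolate exactly one transposition, including careful treatment of parity obstructions that arise for small $n$. Since this is exactly the substantive content of \cite[Thm. 5.1]{toffolireversible}, our proof amounts to fixing the identification $\Iso(\f_2) \cong \bigsqcup_n \mathrm{Sym}(\{0,1\}^n)$, noting that the prop generated by the generalized CNOT gates surjects onto each $\mathrm{Sym}(\{0,1\}^n)$ by that theorem, and combining with the previous lemma.
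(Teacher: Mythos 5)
Your proposal matches the paper's treatment: the paper offers no independent argument for this lemma but simply defers it to Toffoli's classical theorem that the generalized multi-controlled NOT gates generate every permutation of $\{0,1\}^n$, which is exactly the reduction you carry out after identifying $\Iso(\f_2)$ with $\bigsqcup_n \mathrm{Sym}(\{0,1\}^n)$ via the presentation of $\f_2$ as $(\FSets_2,\times)$. (Minor quibble, immaterial since the content is delegated to the citation: the $(n{-}1)$-controlled NOT \emph{is} a single transposition, and the parity obstruction concerns bounded-control gate sets for large $n$, not small $n$.)
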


%This actually follows from https://arxiv.org/pdf/quant-ph/0207001.pdf
%Scratch space is used to construct n-bit cnot gate
%http://theory.caltech.edu/~preskill/ph229/notes/chap6.pdf

Denote a generalized controlled not gate controlled by wires indexed by $X$, operating on $x$ by $\lbparen X,x\rbparen$

%
%
%Iwama et al,  give a set of identities which are complete for oracles, not general isomorphisms \cite{iwama}.
%Shende restated these identities using the commutator \cite{shende}.

Iwama et al \cite{iwama} originally gave a complete set of identities for circuits generated by generalized controlled not gates where the value of all-but-one output wires are fixed.  It is worth mentioning that Shende et al. later used the commutator to generalize some of these identities \cite[Cor. 26]{shende}.  We conjecture that a very similar set of identities is complete for Boolean isomorphisms: 
%Recall that the {\bf commutator} of two group elements $f,g$ is the element $[f,g]:=fgf^{-1}g^{-1}$; therefore $fg=[f,g]gf$.

%Shende gives a way to compute commutators of controlled isomorphsisms in $\Iso(\FSets_2)$.  In particular, take isomorphisms $f,g$  which only change bits in sets indexed by $X,Y$, respectively.  Then if we control these gates by $Z$ and $W$ respectively, denoted by $f^Z,g^W$, we have :
%$$
%[f,g] = [f^{Y * Z}, g^{X*W}]^{\bar{(X\#Y)} * (W\#Z) }
%$$
%Where $X\# Y $ is the defined as the pointwise xor, and $X * Y$ is defined as the pointwise and.

\begin{conjecture}
%Denote a generalized controlled-not gate controlled from wires indexed by $X$ and operating on the wire $x$ by $\lbparen X,x \rbparen$.

Let  $\Iso(\FSets_2)$ denote the prop generated by all generalized controlled-not gates modulo the following identities:

\begin{itemize}
\item $\lbparen X,x\rbparen \lbparen X,x \rbparen= 1$ 
%So the first identity is that  in this case $\lbparen X,x\rbparen\lbparen Y,y\rbparen=\lbparen Y,y\rbparen\lbparen X,x\rbparen$

\item
If  $x \notin Y $ and $ y \notin X$ then $\lbparen X,x\rbparen \lbparen Y,y\rbparen =\lbparen Y,y\rbparen \lbparen X,x\rbparen $.

%
%So the second identity is that in this case:
%$$
%\lbparen  X,x\rbparen  \lbparen  Y ,y\rbparen = \lbparen \lbparen  X \cup Y -y,x\rbparen \rbparen  \lbparen  Y ,y\rbparen  \lbparen  X,x\rbparen  
%$$

\item
If $x \notin Y$, then $\lbparen X,x\rbparen \lbparen \{x\} \sqcup Y, y\rbparen = \lbparen X\cup Y,y\rbparen  \lbparen \{x\} \sqcup Y, y\rbparen  \lbparen X,x\rbparen $.

\item
If $x \notin Y$, then $ \lbparen \{x\} \sqcup Y, y\rbparen \lbparen X,x\rbparen = \lbparen X,x\rbparen   \lbparen \{x\} \sqcup Y, y\rbparen  \lbparen X\cup Y,y\rbparen $.

%So the third identity is that $\lbparen  X,x\rbparen ^2= 1 $.

\item
If $x \in Y$ and $y \in X$, then
$
\lbparen  X,x \rbparen  \lbparen  Y,y \rbparen   \lbparen  X,x \rbparen 
=
\lbparen  Y,y \rbparen  \lbparen  X,x \rbparen   \lbparen  Y,y \rbparen 
$.

\end{itemize}

\end{conjecture}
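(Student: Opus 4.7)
The plan is to establish soundness and completeness of the listed identities with respect to $\Iso(\f_2)$. Soundness is routine: each side of each identity can be interpreted as a permutation of $\F_2^n$ for suitable $n$ and verified by direct computation, or derived from the distributive law presentation of $\f_2$. The first identity expresses self-inversion of any generalized controlled-not gate; the second expresses commutativity when the target and control sets are disjoint; the third and fourth identities encode conjugation-style commutation of a gate past another whose target lies among the first gate's controls, introducing a compensating gate on the combined control set; and the fifth identity is a braid-like commutator relation for gates whose targets mutually appear as each other's controls.

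For completeness, the strategy is to adapt the normal-form arguments of Iwama et al.~\cite{iwama} and Shende et al.~\cite{shende}. The target is a normal form in which, for each output wire $i$, the composite action of the circuit on that wire is encoded as a Boolean polynomial in $\F_2[x_1,\dots,x_n]/\langle x_j^2-x_j\rangle$ written in sum-of-products form. Using identities one and two one first groups gates by target wire modulo obvious commutations; identities three and four then allow sweeping a gate past any other whose target appears among its controls, with the correction term absorbed into the sum-of-products normal form; identity five handles the case of mutually interacting targets. The uniqueness of the sum-of-products normal form for Boolean polynomials, together with the bijection between Boolean functions $\F_2^n \to \F_2$ and elements of the quotient ring, should then yield faithfulness of the resulting functor $\Iso(\FSets_2)\to \Iso(\f_2)$.

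The main obstacle will be establishing that the five identities suffice in the most general multi-wire setting: Iwama's original completeness theorem concerns circuits with a single designated target wire, while the prop $\Iso(\f_2)$ allows all wires to be acted upon simultaneously. The induction required to reduce to the single-target case is delicate, because swapping two gates via identities three and four can introduce new gates acting on a larger control set, potentially growing the circuit before it shrinks. One would likely proceed by a carefully chosen termination measure---for example, lexicographic on the multiset of sizes of control sets and sweeping top-down from the outputs---mirroring Shende et al.'s commutator-based derivation of the extra identities beyond Iwama's. A further subtlety is that at the prop level one must also check that tensoring with identity wires is compatible with the normal form, so that varying the ambient wire count introduces no new equations; this should follow automatically once the normal form is shown to be preserved under monoidal product. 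Given the difficulty of the comparable results in the literature, it is entirely plausible that additional identities may yet be required, which is why the statement is posed as a conjecture rather than as a theorem.
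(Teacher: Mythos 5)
There is nothing in the paper to compare your argument against: this statement is posed explicitly as a conjecture, and the paper offers no proof. Immediately after stating it, the author remarks that they are not sure the identities are complete and that it does not matter for the rest of the paper, because each subsequent stage (adjoining the black unit, the subobjects, etc.) adds enough new relations that the later presentations are complete regardless of whether this one is. So your honest conclusion --- that additional identities may be required and that completeness remains open --- places you in exactly the same position as the paper itself.

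That said, your proposal is a plan rather than a proof, and the gap is precisely the part you flag. Soundness is indeed routine, but for completeness the ``normal form'' you describe --- one Boolean polynomial per output wire in sum-of-products form --- is a normal form for the \emph{function} computed by the circuit, not for the circuit as a word in the generators. The map from circuits to tuples of polynomials is many-to-one, and completeness is exactly the claim that any two circuits with the same tuple are related by the five identities; asserting that the corrections from identities three and four can be ``absorbed into the sum-of-products normal form'' presupposes a circuit-level rewriting procedure with a termination and confluence argument that is not supplied (and, as you note, the rewrites can temporarily grow control sets, so no obvious measure decreases). Until such a reduction is carried out --- or a counterexample found --- the statement remains, as in the paper, a conjecture; your sketch is a reasonable attack on it but does not settle it.
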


Note that the symmetry is derived in this fragment by composing 3 controlled not gates, as in Definition \label{def:isoaff}.  The axioms of a prop are derived, so we are justified in calling $\Iso(\f_2)$ a prop.

Although we aren't sure if these identities are complete, it doesn't matter in the end.  With each generator we add, we add new enough identities to give a complete presentation, given that there is a complete presentation for $\Iso(\f_2)$.  However, eventually once we add enough generators and identities, we get a finite, complete presentation.

\begin{definition}
Let $\inj(\f_2)$ be the prop given by adjoining the black unit to $\Iso(\f_2)$ modulo:

$$
\begin{tikzpicture}
	\begin{pgfonlayer}{nodelayer}
		\node [style=oplus] (0) at (2, 1.5) {};
		\node [style=none] (1) at (2, 2.25) {};
		\node [style=none] (2) at (2, 0.75) {};
		\node [style=none] (3) at (1.5, 2.25) {};
		\node [style=none] (4) at (1.5, 0.75) {};
		\node [style=dot] (5) at (1.5, 1.5) {};
		\node [style=none] (6) at (0.5, 2.25) {};
		\node [style=dot] (7) at (0.5, 1.5) {};
		\node [style=none] (8) at (1.25, 1.5) {};
		\node [style=none] (9) at (0.75, 1.5) {};
		\node [style=none] (10) at (1, 1.5) {$\cdots$};
		\node [style=none] (11) at (1, 1.75) {$n$};
		\node [style=none] (13) at (0, 2.25) {};
		\node [style=dot] (14) at (0, 1.5) {};
		\node [style=X] (15) at (0, 1) {};
		\node [style=none] (16) at (0.5, 0.75) {};
	\end{pgfonlayer}
	\begin{pgfonlayer}{edgelayer}
		\draw (0) to (1.center);
		\draw (0) to (2.center);
		\draw (0) to (5);
		\draw (5) to (3.center);
		\draw (5) to (4.center);
		\draw (7) to (6.center);
		\draw (9.center) to (7);
		\draw (8.center) to (5);
		\draw (14) to (13.center);
		\draw (15) to (14);
		\draw (16.center) to (7);
		\draw (7) to (14);
	\end{pgfonlayer}
\end{tikzpicture}
\eq{mono.ftwo}
\begin{tikzpicture}
	\begin{pgfonlayer}{nodelayer}
		\node [style=none] (1) at (2, 2.25) {};
		\node [style=none] (2) at (2, 0.75) {};
		\node [style=none] (3) at (1.5, 2.25) {};
		\node [style=none] (4) at (1.5, 0.75) {};
		\node [style=none] (6) at (0.5, 2.25) {};
		\node [style=none] (10) at (1, 1.5) {$\cdots$};
		\node [style=none] (11) at (1, 1.75) {$n$};
		\node [style=none] (13) at (0, 2.25) {};
		\node [style=X] (15) at (0, 1) {};
		\node [style=none] (16) at (0.5, 0.75) {};
	\end{pgfonlayer}
	\begin{pgfonlayer}{edgelayer}
		\draw (16.center) to (6.center);
		\draw (13.center) to (15);
		\draw (4.center) to (3.center);
		\draw (1.center) to (2.center);
	\end{pgfonlayer}
\end{tikzpicture}
$$
\end{definition}

\begin{lemma}
\label{lem:injand}
$\inj(\f_2)$ is a presentation for the prop $(\inj(\FSets_2),\times)$.
\end{lemma}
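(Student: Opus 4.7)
The plan is to prove this via the standard soundness/normal-form/completeness schema, mirroring the structure used for $\inj(\cb_2)$ above.

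First, I would extend the interpretation $\llbracket - \rrbracket : \Iso(\f_2) \to (\Iso(\FSets_2), \times)$ to $\inj(\f_2)$ by sending $\Xdot : 0 \to 1$ to the injection $1 \hookrightarrow 2$ that picks out the element $0 \in \F_2$. Soundness reduces to checking the single new relation \ref{mono.ftwo}: when one control wire of a generalized Toffoli is fixed to $0$, the product of control values is $0$, so the target is unchanged. Both sides of \ref{mono.ftwo} therefore compute the evident injection $(c_1, \ldots, c_{n+2}, t) \mapsto (0, c_1, \ldots, c_{n+2}, t)$.

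Next, I would establish a normal form: every morphism $f : n \to m$ in $\inj(\f_2)$ equals $\iota \circ (\Xdot^{\otimes(m-n)} \otimes \mathrm{id}_n)$ for some $\iota \in \Iso(\f_2)$ of type $m \to m$. Since $\Xdot$ has empty domain, the interchange law of symmetric monoidal categories lets me slide each $\Xdot$ generator downward past the rest of the diagram (nothing lies below it at its horizontal position), collecting them into a single $\Xdot^{\otimes k} \otimes \mathrm{id}_n$ block at the bottom with all iso-generators assembled into $\iota$ above. This also yields fullness semantically: every injection $2^n \hookrightarrow 2^m$ factors as an automorphism of $2^m$ precomposed with the canonical zero-padding inclusion.

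Finally, for completeness, suppose $\iota \circ (\Xdot^{\otimes k} \otimes \mathrm{id}_n)$ and $\iota' \circ (\Xdot^{\otimes k} \otimes \mathrm{id}_n)$ represent the same injection. Then $\sigma := \iota^{-1} \circ \iota' \in \Iso(\f_2)$ pointwise fixes $\{0\}^k \times 2^n \subset 2^{k+n}$, and it suffices to show $\sigma \circ (\Xdot^{\otimes k} \otimes \mathrm{id}_n) = \Xdot^{\otimes k} \otimes \mathrm{id}_n$ already in $\inj(\f_2)$. The relation \ref{mono.ftwo} is exactly the statement that a generalized Toffoli with at least one $\Xdot$-sourced control can be absorbed into the $\Xdot$-block, so the problem reduces to a generation claim: every $\sigma$ in the pointwise stabilizer factors (modulo the iso relations) as a product of such Toffolis. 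This is the main obstacle. One proves it by induction on $k$: a Toffoli with at least one control among the first $k$ bits acts as the identity on $\{0\}^k \times 2^n$, and conjugation by iso-generators together with the observation that any two elements of the complement differ in some bit shows that the basic Toffolis generate the symmetric group on the $2^n(2^k-1)$-element complement. Conditional on the conjectured complete presentation of $\Iso(\f_2)$ by generalized Toffoli gates, this finishes the completeness argument.
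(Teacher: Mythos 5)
The paper itself gives no proof of this lemma, neither in the text nor in the appendix; it is asserted with the surrounding caveat that all lemmas in this section are conditional on a complete presentation of $\Iso(\f_2)$. So there is no argument of the paper's to compare against directly, but your proposal runs exactly parallel to the appendix proof of the analogous affine statement (Lemma \ref{lem:injaffcb}): interpret the black unit as the point $0\in\F_2$, check the single new relation (\ref{mono.ftwo}) for soundness, slide all black units to the bottom to obtain the normal form ``isomorphism of type $m\to m$ precomposed with $m-n$ black units tensored with $\mathrm{id}_n$'', obtain fullness by extending any injection $2^n\to 2^m$ to a permutation of $2^m$ (using that the generalized controlled-not gates present all of $\Iso(\FSets_2)$), and reduce faithfulness to absorbing, via (\ref{mono.ftwo}), any isomorphism whose interpretation fixes $\{0\}^k\times 2^n$ pointwise. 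The genuinely new content relative to the affine case, where the isomorphism factor is essentially unique, is the generation claim you isolate: the pointwise stabilizer of $\{0\}^k\times 2^n$ must be generated by generalized controlled-nots having at least one positive control among the first $k$ wires, since only those gates are absorbed by (\ref{mono.ftwo}). Identifying this as the crux, and making the dependence on the conjectured completeness of $\Iso(\f_2)$ explicit, matches the paper's own position.

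One step of your sketch is too loose as written: ``conjugation by iso-generators'' does not stay inside the stabilizer unless the conjugating element itself fixes $\{0\}^k\times 2^n$ pointwise, so it cannot be used freely to transport transpositions around the complement. A clean repair: the complement $C$ of $\{0\}^k\times 2^n$ is connected in the Hamming graph, so $\mathrm{Sym}(C)$ is generated by transpositions of pairs $u,v\in C$ differing in a single bit $j$; such a transposition is the mixed-control not with target $j$ whose controls are the remaining coordinates of $u$, and since $v\in C$ it necessarily has a positive control on some wire among the first $k$ other than $j$. Eliminating each negative control by the identity ``apply if $w=0$'' $=$ ``apply'' followed by ``apply if $w=1$'' expresses it as a product of positive-control gates, every factor of which retains that first-block positive control and is therefore absorbable by (\ref{mono.ftwo}). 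With that substitution (and granting, as both you and the paper do, a complete presentation of $\Iso(\f_2)$), your argument goes through.
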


%Note, however, that the monoidal theory for $\inj(\f_2)$ is dependant on that of $\iso(\f_2)$ the identities of which are conjectured, even though we only need one extra identity to get injections.

The pushout of a diagram of sets and functions $2^n \xleftarrowtail{} 2^k \xrightarrowtail{} 2^m$ is not always a power of 2.  Therefore, one should not expect to construct categories of partial isomorphisms via a distributive law of on $\inj(\f_2)\otimes_{\Iso(\f_2)} \inj(\f_2)^\op$. Instead one must add all of the nontrivial subobjects to the constituent props forming the distributive law; as opposed to the affine case, there are more than one such subobjects which arise in this way.

\begin{definition}
Consider the pro $\sub_2$ generated by endomorphisms such that for any $n$, $\sub_2(n,n)$ is the set described by all $n$-variable polynomials over $\F_2$.  Denote such a generator by a box with $n$ inputs and $n$ outputs labelled by the corresponding polynomial.

We require that the following equations hold so that
$$\forall n,m \in \N, p,r \in \F_2[x_1,\ldots, x_n],  q \in \F_2[x_{n+1},\ldots, x_{n+m}]:\hspace*{1cm}
\begin{tikzpicture}
	\begin{pgfonlayer}{nodelayer}
		\node [style=none] (0) at (3, 2.25) {};
		\node [style=none] (1) at (3, 3.25) {};
		\node [style=map] (2) at (3, 2.75) {$1$};
		\node [style=none] (6) at (3, 3.5) {$n$};
		\node [style=none] (8) at (3, 2) {$n$};
	\end{pgfonlayer}
	\begin{pgfonlayer}{edgelayer}
		\draw (0.center) to (1.center);
	\end{pgfonlayer}
\end{tikzpicture}
\eq{sub.one}
\begin{tikzpicture}
	\begin{pgfonlayer}{nodelayer}
		\node [style=none] (0) at (3, 2.25) {};
		\node [style=none] (1) at (3, 3.25) {};
		\node [style=none] (6) at (3, 3.5) {$n$};
		\node [style=none] (8) at (3, 2) {$n$};
	\end{pgfonlayer}
	\begin{pgfonlayer}{edgelayer}
		\draw (0.center) to (1.center);
	\end{pgfonlayer}
\end{tikzpicture}
\hspace*{,5cm}
\begin{tikzpicture}
	\begin{pgfonlayer}{nodelayer}
		\node [style=none] (0) at (5, 2) {};
		\node [style=none] (1) at (5, 4) {};
		\node [style=map] (2) at (5, 2.5) {$r$};
		\node [style=map] (3) at (5, 3.5) {$p$};
		\node [style=none] (4) at (5, 4.25) {$n$};
		\node [style=none] (5) at (5, 1.75) {$n$};
	\end{pgfonlayer}
	\begin{pgfonlayer}{edgelayer}
		\draw (0.center) to (1.center);
	\end{pgfonlayer}
\end{tikzpicture}
\eq{sub.two}
\begin{tikzpicture}
	\begin{pgfonlayer}{nodelayer}
		\node [style=none] (0) at (5, 2.25) {};
		\node [style=none] (1) at (5, 4.25) {};
		\node [style=map] (2) at (5, 3.25) {$p+r+pr$};
		\node [style=none] (4) at (5, 4.5) {$n$};
		\node [style=none] (5) at (5, 2) {$n$};
	\end{pgfonlayer}
	\begin{pgfonlayer}{edgelayer}
		\draw (0.center) to (1.center);
	\end{pgfonlayer}
\end{tikzpicture}
\hspace*{.5cm}
\begin{tikzpicture}
	\begin{pgfonlayer}{nodelayer}
		\node [style=none] (0) at (2.4, 2.25) {};
		\node [style=none] (1) at (2.4, 3.25) {};
		\node [style=map] (2) at (2.4, 2.75) {$p$};
		\node [style=none] (3) at (3, 3.25) {};
		\node [style=none] (4) at (3, 2.25) {};
		\node [style=map] (5) at (3, 2.75) {$q$};
		\node [style=none] (6) at (2.4, 3.5) {$n$};
		\node [style=none] (7) at (3, 3.5) {$m$};
		\node [style=none] (8) at (2.4, 2) {$n$};
		\node [style=none] (9) at (3, 2) {$m$};
	\end{pgfonlayer}
	\begin{pgfonlayer}{edgelayer}
		\draw (0.center) to (1.center);
		\draw (3.center) to (4.center);
	\end{pgfonlayer}
\end{tikzpicture}
\eq{sub.three}
\begin{tikzpicture}
	\begin{pgfonlayer}{nodelayer}
		\node [style=none] (0) at (2.5, 2.25) {};
		\node [style=none] (1) at (2.5, 3.25) {};
		\node [style=map] (2) at (2.75, 2.75) {$p\cdot q$};
		\node [style=none] (3) at (3, 3.25) {};
		\node [style=none] (4) at (3, 2.25) {};
		\node [style=none] (6) at (2.5, 3.5) {$n$};
		\node [style=none] (7) at (3, 3.5) {$m$};
		\node [style=none] (8) at (2.5, 2) {$n$};
		\node [style=none] (9) at (3, 2) {$m$};
	\end{pgfonlayer}
	\begin{pgfonlayer}{edgelayer}
		\draw (0.center) to (1.center);
		\draw (3.center) to (4.center);
	\end{pgfonlayer}
\end{tikzpicture}
$$
As well as, for all $n$, the equations of the quotient rings  $\F_2[x_1,\ldots, x_n]/\langle x_1^2-x_1,\ldots, x_n^2-x_n \rangle$.

\end{definition}

\begin{lemma}
\label{lem:sub}
$\sub_2$ is a presentation for the pro of symmetric spans of monic functions, ie spans of the following form $2^n \xleftarrow{e} k \xrightarrow{e}2^n$, for all $n,k \in \N$ and monics $e$.
\end{lemma}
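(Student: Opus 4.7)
The plan is to exhibit an isomorphism of pros between $\sub_2$ and the pro $\mathbb{S}$ whose hom-sets consist of the symmetric monic spans $2^n \xleftarrowtail{e} k \xrightarrowtail{e} 2^n$ in $\FSets$. Since these spans only exist as endomorphisms (legs share a common codomain), both sides are pros with trivial non-endomorphism hom-sets, and the content is a natural isomorphism of monoids $\sub_2(n,n) \cong \mathbb{S}(2^n, 2^n)$ compatible with the tensor product.

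First I would define a pro morphism $F \colon \sub_2 \to \mathbb{S}$ by setting $F(n) = 2^n$ and sending each generator box labelled by $p \in \F_2[x_1,\dots,x_n]/\langle x_i^2 - x_i\rangle$ to the symmetric monic span $2^n \xleftarrowtail{\iota_p} S_p \xrightarrowtail{\iota_p} 2^n$, where $S_p \subseteq 2^n$ is the subset determined by the Boolean function represented by $p$ (with the convention on $S_p$ chosen so as to make the equations consistent). Functoriality requires checking each of the four relations: equation~(\ref{sub.one}) must send the generator labelled $1$ to the identity span on $2^n$; equation~(\ref{sub.two}) must correspond to the pullback of symmetric monic spans, which reduces to intersection of the associated subsets of $2^n$; equation~(\ref{sub.three}) must correspond to the monoidal product of spans, which reduces to Cartesian product of subsets when the variable sets are disjoint; and the quotient relations $x_i^2 = x_i$ hold automatically since the semantics factors through Boolean functions, which always satisfy this identity.

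To show $F$ is fully faithful on each hom-set, I would invoke the classical fact that the quotient ring $\F_2[x_1,\dots,x_n]/\langle x_i^2-x_i,\ldots\rangle$ is isomorphic to the algebra of Boolean functions $2^n \to \F_2$ under pointwise operations, and that such Boolean functions are in bijection with subsets of $2^n$ via their support. On the other hand, a symmetric monic span $2^n \xleftarrowtail{e} k \xrightarrowtail{e} 2^n$ is determined (up to unique isomorphism of the apex) by the common image $e(k) \subseteq 2^n$, so $\mathbb{S}(2^n,2^n)$ is likewise in bijection with the power set of $2^n$. This shows $F$ is a bijection on hom-sets. Preservation of the tensor product follows from equation~(\ref{sub.three}): disjoint-variable multiplication of polynomials implements Cartesian product on the subset side.

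The main obstacle I expect is pinning down the correct convention relating a polynomial $p$ to its subset $S_p$ so that equations~(\ref{sub.one}),~(\ref{sub.two}), and~(\ref{sub.three}) are all realised by the natural operations on symmetric monic spans simultaneously; once this convention is fixed, the work becomes routine bookkeeping, since the pro-structure on either side is generated by endomorphisms and the power-set functor $n \mapsto \mathcal{P}(2^n)$ already determines everything up to the two monoidal operations of intersection and Cartesian product.
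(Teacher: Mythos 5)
Your overall strategy coincides with the paper's: send the box labelled $p$ to the symmetric monic span on a subset $S_p\subseteq 2^n$, observe that a symmetric monic span is determined up to isomorphism of its apex by its image, and deduce full faithfulness from the bijection between polynomials in the quotient ring (algebraic normal form), Boolean functions, and subsets of $2^n$. The paper's proof does exactly this, choosing $S_p=\ev_p^{-1}(1)$ and citing reduction to algebraic normal form for faithfulness.

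The step you defer, however, is not routine bookkeeping, and naming it precisely matters. In the target pro, composition of symmetric monic spans is pullback, i.e.\ intersection of the corresponding subsets, the identity is the full subset, and the tensor is Cartesian product. Under the convention $S_p=\ev_p^{-1}(1)$ (the one the paper's proof uses), relations (\ref{sub.one}) and (\ref{sub.three}) are satisfied, but the composite of the spans for $p$ and $r$ is the span for $p\cdot r$, whereas (\ref{sub.two}) demands $p+r+pr$; over $\F_2$ the polynomial $p+r+pr$ is the pointwise OR, so its set of solutions evaluating to $1$ is the \emph{union} of the supports, not the intersection. Under the opposite convention $S_p=\ev_p^{-1}(0)$, relation (\ref{sub.two}) holds, but then the identity span corresponds to the box labelled $0$, contradicting (\ref{sub.one}), and the tensor corresponds to $p+q+pq$, contradicting (\ref{sub.three}). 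Hence no single choice of $S_p$ realises all three printed relations, so the functor $F$ you propose is not well defined until one relation is amended (e.g.\ read $p\cdot r$ in (\ref{sub.two}), or dually adjust (\ref{sub.one}) and (\ref{sub.three}) and use the zero-set convention, which is the one forced later by equation (\ref{oracle})). This slip originates in the paper, whose proof never actually verifies the relations, but your proposal inherits it: the "convention to be fixed later" is exactly where the argument breaks as stated. With the relation corrected, the remainder of your argument --- every morphism reduces to a single box, boxes biject with subsets, spans biject with subsets --- goes through and is the same as the paper's.
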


\begin{proof}
Each polynomial  $p \in \F_2[x_1,\ldots, x_n]/\langle x_1^2-x_1,\ldots, x_n^2-x_n \rangle$ corresponds to a function $\ev_p:\Z_2^n \to \Z_2$ given by evaluation.  Let $k = |\ev^{-1}(1)|$, then there chose a function $f_p:k \rightarrowtail 2^n$ picking out all the solutions which evaluate to $1$. The functor from $\sub_2$ to this subcategory spans takes polynomials $p \mapsto (2^n \xleftarrowtail {f_p} k \xrightarrowtail {f_p} 2^n)$.  Any two two spans induced by the same polynomial are isomorphic, so this is actually well defined.  It is clearly an isomorphism on objects, and it can easily be shown to be a monoidal functor.

The fullness is easy and the faithfulness comes from the fact that we can reduce every map to a polynomial and then reduce the polynomial to algebraic normal form.

\end{proof}

%
%The first axiom enforces that the trivial polynomial is the identity.
%The second axiom allows one to perform elementary row operations on polynomials.
%The third axiom allows one to multiply all polynomials.
%Because every map $f$ in $\sub_2(n,n)$ has precisely one representative polynomial, and polynomials have a normal form via their reduction, and row reduction is confluent, completeness is immediate.

\begin{definition}
Let $\sub\Iso\f_2$ be the prop generated by a distributive law of pros:
$$
\sub_2 \otimes \Iso(\f_2);
$$
$$
 \forall n,m,k \in \N, \forall p \in \F_2[x_1,\ldots, x_{n+2+m}],
q \in \F_2[x_1,\ldots,x_{n+m+1+k}],
r \in \F_2[x_1,\ldots, x_n]:
$$
$$
\begin{tikzpicture}
	\begin{pgfonlayer}{nodelayer}
		\node [style=map] (0) at (5, 3.5) {$p(x_1,\ldots, x_n, x_{n+1}, x_{n+2}, x_{n+3},\ldots, x_{n+2+m})$};
		\node [style=none] (1) at (4.5, 4.25) {};
		\node [style=none] (2) at (5.5, 4.25) {};
		\node [style=none] (3) at (4.75, 4.25) {};
		\node [style=none] (4) at (5.25, 4.25) {};
		\node [style=none] (5) at (4.5, 2.25) {};
		\node [style=none] (6) at (5.5, 2.25) {};
		\node [style=none] (7) at (4.75, 2.75) {};
		\node [style=none] (8) at (5.25, 2.75) {};
		\node [style=none] (9) at (4.5, 4.5) {$n$};
		\node [style=none] (10) at (4.5, 2) {$n$};
		\node [style=none] (11) at (5.5, 4.5) {$m$};
		\node [style=none] (12) at (5.5, 2) {$m$};
		\node [style=none] (13) at (5.25, 2.25) {};
		\node [style=none] (14) at (4.75, 2.25) {};
	\end{pgfonlayer}
	\begin{pgfonlayer}{edgelayer}
		\draw [in=120, out=-90] (1.center) to (0);
		\draw [in=-90, out=60] (0) to (2.center);
		\draw [in=75, out=-90, looseness=0.75] (4.center) to (0);
		\draw [in=-90, out=105, looseness=0.75] (0) to (3.center);
		\draw [in=300, out=90] (6.center) to (0);
		\draw [in=90, out=-75, looseness=0.75] (0) to (8.center);
		\draw [in=255, out=90, looseness=0.75] (7.center) to (0);
		\draw [in=90, out=-120] (0) to (5.center);
		\draw [in=270, out=90] (13.center) to (7.center);
		\draw [in=270, out=90] (14.center) to (8.center);
	\end{pgfonlayer}
\end{tikzpicture}
\eq{subiso.one}
\begin{tikzpicture}
	\begin{pgfonlayer}{nodelayer}
		\node [style=map] (0) at (5, 3.25) {$p(x_1,\ldots, x_n, x_{n+2}, x_{n+1}, x_{n+3},\ldots, x_{n+2+m})$};
		\node [style=none] (1) at (4.5, 2.5) {};
		\node [style=none] (2) at (5.5, 2.5) {};
		\node [style=none] (3) at (4.75, 2.5) {};
		\node [style=none] (4) at (5.25, 2.5) {};
		\node [style=none] (5) at (4.5, 4.5) {};
		\node [style=none] (6) at (5.5, 4.5) {};
		\node [style=none] (7) at (4.75, 4) {};
		\node [style=none] (8) at (5.25, 4) {};
		\node [style=none] (9) at (4.5, 2.25) {$n$};
		\node [style=none] (10) at (4.5, 4.75) {$n$};
		\node [style=none] (11) at (5.5, 2.25) {$m$};
		\node [style=none] (12) at (5.5, 4.75) {$m$};
		\node [style=none] (13) at (5.25, 4.5) {};
		\node [style=none] (14) at (4.75, 4.5) {};
	\end{pgfonlayer}
	\begin{pgfonlayer}{edgelayer}
		\draw [in=-120, out=90] (1.center) to (0);
		\draw [in=90, out=-60] (0) to (2.center);
		\draw [in=-75, out=90, looseness=0.75] (4.center) to (0);
		\draw [in=90, out=-105, looseness=0.75] (0) to (3.center);
		\draw [in=-300, out=-90] (6.center) to (0);
		\draw [in=-90, out=75, looseness=0.75] (0) to (8.center);
		\draw [in=-255, out=-90, looseness=0.75] (7.center) to (0);
		\draw [in=-90, out=120] (0) to (5.center);
		\draw [in=-270, out=-90] (13.center) to (7.center);
		\draw [in=-270, out=-90] (14.center) to (8.center);
	\end{pgfonlayer}
\end{tikzpicture}
$$
$$
\begin{tikzpicture}
	\begin{pgfonlayer}{nodelayer}
		\node [style=map] (0) at (5, 3.5) {$q(x_1,\ldots, x_{n+m+1+k})$};
		\node [style=none] (1) at (3.75, 4) {};
		\node [style=none] (2) at (6.25, 4) {};
		\node [style=none] (3) at (4.25, 4) {};
		\node [style=none] (4) at (5.75, 4) {};
		\node [style=none] (5) at (3.75, 3) {};
		\node [style=none] (6) at (6.25, 3) {};
		\node [style=none] (7) at (4.25, 3) {};
		\node [style=none] (8) at (5.75, 3) {};
		\node [style=none] (9) at (3.75, 2.25) {$n$};
		\node [style=none] (10) at (4.25, 2.5) {};
		\node [style=none] (11) at (5.75, 2.5) {};
		\node [style=dot] (12) at (4.25, 2.75) {};
		\node [style=oplus] (13) at (5.75, 2.75) {};
		\node [style=none] (14) at (3.75, 2.5) {};
		\node [style=none] (15) at (6.25, 2.5) {};
		\node [style=none] (16) at (5.25, 4) {};
		\node [style=dot] (17) at (5.25, 2.75) {};
		\node [style=none] (18) at (5.25, 2.5) {};
		\node [style=none] (19) at (6.25, 2.25) {$k$};
		\node [style=none] (20) at (4.75, 2.5) {$m$};
		\node [style=none] (21) at (3.75, 5) {$n$};
		\node [style=none] (22) at (6.25, 5) {$k$};
		\node [style=none] (23) at (4.75, 4) {$m$};
		\node [style=none] (24) at (4.25, 4.5) {};
		\node [style=none] (25) at (5.75, 4.5) {};
		\node [style=none] (26) at (3.75, 4.5) {};
		\node [style=none] (27) at (6.25, 4.5) {};
		\node [style=none] (28) at (5.25, 4.5) {};
	\end{pgfonlayer}
	\begin{pgfonlayer}{edgelayer}
		\draw [in=270, out=90] (10.center) to (7.center);
		\draw [in=270, out=90] (11.center) to (8.center);
		\draw (15.center) to (6.center);
		\draw (14.center) to (5.center);
		\draw (13) to (17);
		\draw [style=dotted] (17) to (12);
		\draw (18.center) to (17);
		\draw (1.center) to (26.center);
		\draw (3.center) to (24.center);
		\draw (16.center) to (28.center);
		\draw (4.center) to (25.center);
		\draw (2.center) to (27.center);
		\draw (1.center) to (5.center);
		\draw (12) to (3.center);
		\draw (16.center) to (17);
		\draw (13) to (4.center);
		\draw (2.center) to (6.center);
	\end{pgfonlayer}
\end{tikzpicture}
\eq{subiso.two}
\begin{tikzpicture}
	\begin{pgfonlayer}{nodelayer}
		\node [style=map] (0) at (5, 3) {$q(x_1,\ldots, x_{n+m}, (x_{n+1}\ldots x_{n+m-1})+x_{n+m+1}, x_{n+m+2}, \ldots, x_{n+m+1+k})$};
		\node [style=none] (1) at (3.75, 2.5) {};
		\node [style=none] (2) at (6.25, 2.5) {};
		\node [style=none] (3) at (4.25, 2.5) {};
		\node [style=none] (4) at (5.75, 2.5) {};
		\node [style=none] (5) at (3.75, 3.5) {};
		\node [style=none] (6) at (6.25, 3.5) {};
		\node [style=none] (7) at (4.25, 3.5) {};
		\node [style=none] (8) at (5.75, 3.5) {};
		\node [style=none] (9) at (3.75, 4.25) {$n$};
		\node [style=none] (10) at (4.25, 4) {};
		\node [style=none] (11) at (5.75, 4) {};
		\node [style=dot] (12) at (4.25, 3.75) {};
		\node [style=oplus] (13) at (5.75, 3.75) {};
		\node [style=none] (14) at (3.75, 4) {};
		\node [style=none] (15) at (6.25, 4) {};
		\node [style=none] (16) at (5.25, 2.5) {};
		\node [style=dot] (17) at (5.25, 3.75) {};
		\node [style=none] (18) at (5.25, 4) {};
		\node [style=none] (19) at (6.25, 4.25) {$k$};
		\node [style=none] (20) at (4.75, 4) {$m$};
		\node [style=none] (21) at (3.75, 1.75) {$n$};
		\node [style=none] (22) at (6.25, 1.75) {$k$};
		\node [style=none] (23) at (4.75, 2.5) {$m$};
		\node [style=none] (24) at (4.25, 2) {};
		\node [style=none] (25) at (5.75, 2) {};
		\node [style=none] (26) at (3.75, 2) {};
		\node [style=none] (27) at (6.25, 2) {};
		\node [style=none] (28) at (5.25, 2) {};
	\end{pgfonlayer}
	\begin{pgfonlayer}{edgelayer}
		\draw [in=-270, out=-90] (10.center) to (7.center);
		\draw [in=-270, out=-90] (11.center) to (8.center);
		\draw (15.center) to (6.center);
		\draw (14.center) to (5.center);
		\draw (13) to (17);
		\draw [style=dotted] (17) to (12);
		\draw (18.center) to (17);
		\draw (26.center) to (1.center);
		\draw (24.center) to (3.center);
		\draw (28.center) to (16.center);
		\draw (25.center) to (4.center);
		\draw (27.center) to (2.center);
		\draw (1.center) to (5.center);
		\draw (3.center) to (12);
		\draw (16.center) to (17);
		\draw (4.center) to (13);
		\draw (2.center) to (6.center);
	\end{pgfonlayer}
\end{tikzpicture}\hspace*{.5cm}
\begin{tikzpicture}
	\begin{pgfonlayer}{nodelayer}
		\node [style=none] (5) at (3.25, 4.25) {};
		\node [style=none] (6) at (3.25, 3.25) {};
		\node [style=map] (9) at (3.25, 3.75) {$r$};
		\node [style=none] (10) at (3.25, 2.25) {};
		\node [style=none] (11) at (3.75, 4.25) {};
		\node [style=none] (12) at (3.75, 2.25) {};
		\node [style=none] (13) at (3.75, 3.25) {};
	\end{pgfonlayer}
	\begin{pgfonlayer}{edgelayer}
		\draw (5.center) to (6.center);
		\draw [in=90, out=-90] (13.center) to (10.center);
		\draw [in=-90, out=90] (12.center) to (6.center);
		\draw (13.center) to (11.center);
	\end{pgfonlayer}
\end{tikzpicture}
\eq{subiso.three}
\begin{tikzpicture}
	\begin{pgfonlayer}{nodelayer}
		\node [style=none] (5) at (3.75, 2.25) {};
		\node [style=none] (6) at (3.75, 3.25) {};
		\node [style=map] (9) at (3.75, 2.75) {$r$};
		\node [style=none] (10) at (3.75, 4.25) {};
		\node [style=none] (11) at (3.25, 2.25) {};
		\node [style=none] (12) at (3.25, 4.25) {};
		\node [style=none] (13) at (3.25, 3.25) {};
	\end{pgfonlayer}
	\begin{pgfonlayer}{edgelayer}
		\draw (5.center) to (6.center);
		\draw [in=270, out=90] (13.center) to (10.center);
		\draw [in=90, out=-90] (12.center) to (6.center);
		\draw (13.center) to (11.center);
	\end{pgfonlayer}
\end{tikzpicture}
$$

\end{definition}

\begin{lemma}
\label{lem:subiso}
$\sub\Iso\f_2$ is a presentation for the subcategory of $(\Span(\FSets),\times)$ generated by spans of the form $2^n \xleftarrowtail{e} k \xrightarrowtail {e} 2^m \xrightarrow[\cong]{f} 2^m$, for all $n,m k \in \N$ and all isomorphisms $f$ and monics $e$.
\end{lemma}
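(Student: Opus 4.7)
The strategy is to treat $\sub\Iso\f_2$ through Lack's distributive-law machinery and to interpret its generators semantically in $(\Span(\FSets),\times)$, leveraging the identifications already established by Lemma \ref{lem:sub} and the presentation of $\Iso(\f_2)$.

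First, I would verify that (\ref{subiso.one})--(\ref{subiso.three}) genuinely present a distributive law of pros $\sub_2 \otimes \Iso(\f_2)$. Semantically, a polynomial box $p \in \sub_2(n,n)$ is sent to the symmetric span $2^n \xleftarrowtail{e_p} \ev_p^{-1}(1) \xrightarrowtail{e_p} 2^n$ (as in the proof of Lemma \ref{lem:sub}), while a generalized controlled-not on $n{+}1$ wires is sent to its Boolean isomorphism viewed as a span $2^{n+1} = 2^{n+1} \xrightarrow{f} 2^{n+1}$. The content of each of the three equations is then a direct pullback calculation: (\ref{subiso.one}) is that pulling back $e_p$ along a symmetry renames coordinates, (\ref{subiso.two}) is that pulling $e_q$ back along the multi-control CNOT $x_{n+m+1} \mapsto x_{n+m+1} + x_{n+1}\cdots x_{n+m-1}$ substitutes that expression into $q$, and (\ref{subiso.three}) handles the planar symmetry past a polynomial box. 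These identities are precisely the pullback rewriting rules that allow an iso on the right to be pushed through a subobject on the left, so they define a distributive law in the sense of the earlier distributive-law lemma.

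Second, with the distributive law in hand, the lemma from $\S 2$ yields a normal form: every morphism of $\sub\Iso\f_2$ can be written as $\iota\mathbin{;}\sigma$ with $\iota \in \Iso(\f_2)$ and $\sigma \in \sub_2$, unique modulo the equations of the two constituent pros. I would then define the evaluation functor $F:\sub\Iso\f_2 \to (\Span(\FSets),\times)$ by sending such a normal form to the $\Span$-composition of the iso-span for $\iota$ with the symmetric subobject span for $\sigma$. Well-definedness on the component pros is immediate from Lemma \ref{lem:sub} and from the generators-and-relations description of $\Iso(\f_2)$, and compatibility with the distributive equations is exactly what step one checked.

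Third, fullness is direct: any span of the stated form $2^n \xleftarrowtail{e} k \xrightarrowtail{e} 2^m \xrightarrow[\cong]{f} 2^m$ factors in $\Span(\FSets)$ as a symmetric subobject span followed by an iso-span, both of which are in the image of $F$ by the componentwise presentations. For faithfulness, I would use the uniqueness of the distributive-law normal form: two normal forms $\iota_1\mathbin{;}\sigma_1$ and $\iota_2\mathbin{;}\sigma_2$ denoting the same span must satisfy $F(\iota_1)=F(\iota_2)$ and $F(\sigma_1)=F(\sigma_2)$ (since the subobject part is determined as the image of the left leg of the composite span, and the iso part is then the restriction of $f$ to that subobject, which extends uniquely to an iso on the power of two), so faithfulness reduces to the already-established faithfulness of the presentations of $\Iso(\f_2)$ and $\sub_2$.

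The main obstacle is step one: checking that the three listed equations together with the pro axioms are \emph{enough} to push an arbitrary generalized CNOT past an arbitrary polynomial box --- i.e., that we have a distributive law rather than merely a lax one. I expect this to reduce, by induction on the arity of the CNOT and the degree of the polynomial, to the base cases captured by (\ref{subiso.one})--(\ref{subiso.three}), since any Boolean iso is a composition of generalized CNOTs (by \cite[Thm.~5.1]{toffolireversible}) and any polynomial can be built from monomials via the sum and product rules (\ref{sub.two}) and (\ref{sub.three}) inherited from $\sub_2$; the remaining bookkeeping is confined to re-indexing of variables, which is what (\ref{subiso.one}) and (\ref{subiso.three}) are for.
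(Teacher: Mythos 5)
Your overall strategy---interpret the generators as spans, use the distributive-law normal form, get fullness by construction, and reduce faithfulness to the two factors---is the same as the paper's, whose entire proof is three sentences long (its faithfulness step is ``push everything to the end and then use the decidability of equality on both factors''). Your soundness check of (\ref{subiso.one})--(\ref{subiso.three}) as pullback computations and your fullness argument are fine; note only that the law $\sub_2\otimes\Iso(\f_2)$ pushes the gates upward past the boxes, so the normal form is a box followed by an isomorphism, $\sigma;\iota$ rather than $\iota;\sigma$, matching the spans $2^n \xleftarrowtail{e} k \xrightarrowtail{e;f} 2^n$ of the statement.

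The genuine gap is in your faithfulness step. You claim the span determines the normal form because ``the iso part is then the restriction of $f$ to that subobject, which extends uniquely to an iso on the power of two.'' A partial injection on a finite set does \emph{not} extend uniquely to a bijection. Concretely, on two wires let $\sigma$ be the box labelled $x_1+1$ (the subobject $\{x_1=0\}$) and let $\iota$ be the controlled-not with control $x_1$: since $\iota$ fixes every point with $x_1=0$, the normal forms $\sigma;\iota$ and $\sigma;\mathrm{id}$ have the same image span $(e_\sigma,e_\sigma)$ although their isomorphism components differ (even simpler: the box for the zero polynomial followed by \emph{any} isomorphism gives the empty span). Moreover these two normal forms are not identified by the listed equations: every relation used---those of $\sub_2$, those of $\Iso(\f_2)$, and (\ref{subiso.one})--(\ref{subiso.three})---has the same gate word on both sides once the boxes are erased, so ``erase the boxes and evaluate the remaining gates as a permutation'' is an invariant of the presented prop separating $\sigma;\iota$ from $\sigma;\mathrm{id}$. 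Hence faithfulness cannot be reduced, as you claim, to the faithfulness of the two factor presentations: one would have to show syntactically that $\sigma;\iota_1=\sigma;\iota_2$ whenever $\iota_1$ and $\iota_2$ agree on the subobject cut out by $\sigma$ (which the listed relations do not yield), or handle the semantic identification some other way. The paper's own proof does not address this point either---it only appeals to decidable equality in each factor---so this is exactly where any complete argument, yours or the paper's, needs more work.
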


\begin{proof}
The obvious functor is clearly monoidal. Moreover, it is full by construction.
For the faithfulness, take two maps $f$ and $g$ in $\sub\Iso\f_2$.  Then one can just push everything to the end and then use the decidability of equality on both factors of the distributive law to show that they are equal.
\end{proof}

% $2^n \xleftarrow{e} k \xrightarrow{e} 2^n$, for all monics $e$ and isomorphisms of the form $2^n = 2^n \xrightarrow{\sim} 2^n$. 

\begin{definition}
Consider the prop $\sub\inj\f_2$ generated by a distributive law of props:
$$
 \sub\Iso\f_2 \otimes_{\Iso(\f_2)} \inj(\f_2);
 \forall n,m \in \N, p \in \F_2[x_1,\ldots, x_{n+1+m}]:
\hspace*{.5cm}
\begin{tikzpicture}
	\begin{pgfonlayer}{nodelayer}
		\node [style=none] (0) at (1.5, 3.5) {};
		\node [style=map] (1) at (2.5, 2.75) {$p(x_1,\ldots,x_{n+1+m})$};
		\node [style=none] (2) at (1.5, 3.75) {$n$};
		\node [style=none] (3) at (3.5, 3.5) {};
		\node [style=none] (4) at (3.5, 3.75) {$m$};
		\node [style=none] (5) at (1.5, 1.75) {};
		\node [style=none] (6) at (3.5, 1.75) {};
		\node [style=none] (7) at (1.5, 1.5) {$n$};
		\node [style=none] (8) at (3.5, 1.5) {$m$};
		\node [style=X] (9) at (2.5, 2) {};
		\node [style=none] (10) at (2.5, 3.5) {};
	\end{pgfonlayer}
	\begin{pgfonlayer}{edgelayer}
		\draw (0.center) to (5.center);
		\draw (3.center) to (6.center);
		\draw (10.center) to (9);
	\end{pgfonlayer}
\end{tikzpicture}
\eq{subinj}
\begin{tikzpicture}
	\begin{pgfonlayer}{nodelayer}
		\node [style=none] (0) at (1.5, 3.75) {};
		\node [style=map] (1) at (2.5, 2.75) {$p(x_1,\ldots,x_n,0,x_{n+2},\ldots,x_{n+1+m})$};
		\node [style=none] (2) at (1.5, 4) {$n$};
		\node [style=none] (3) at (3.5, 3.75) {};
		\node [style=none] (4) at (3.5, 4) {$m$};
		\node [style=none] (5) at (1.5, 2) {};
		\node [style=none] (6) at (3.5, 2) {};
		\node [style=none] (7) at (1.5, 1.75) {$n$};
		\node [style=none] (8) at (3.5, 1.75) {$m$};
		\node [style=X] (9) at (2.5, 3.4) {};
		\node [style=none] (10) at (2.5, 3.75) {};
	\end{pgfonlayer}
	\begin{pgfonlayer}{edgelayer}
		\draw (0.center) to (5.center);
		\draw (3.center) to (6.center);
		\draw (10.center) to (9);
	\end{pgfonlayer}
\end{tikzpicture}
$$
\end{definition}

\begin{lemma}
\label{lem:subinj}
$\sub\inj\f_2$ is a presentation for the subcategory of $(\Span(\FSets),\times)$ generated by spans of the form $2^n \xleftarrowtail{e} k \xrightarrowtail{e} 2^n \xrightarrowtail{e'} 2^{m}$ for all $n,m,k \in \N$ and all monics $e,e'$.
\end{lemma}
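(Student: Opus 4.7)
My plan is to follow the template established in the proofs of Lemmas \ref{lem:subiso} and \ref{lem:injand}: exhibit an obvious interpretation functor from $\sub\inj\f_2$ into the described subcategory of $(\Span(\FSets),\times)$, then verify this functor is bijective on objects, full, and faithful. The interpretation sends a polynomial-labelled box $p \in \F_2[x_1,\ldots,x_n]$ to the symmetric span induced by its solution set (as in Lemma \ref{lem:sub}), each generalized controlled-not gate to the corresponding Boolean isomorphism (a span with both legs an isomorphism), and the black unit generator inherited from $\inj(\f_2)$ to the monic $2^n \xrightarrowtail{} 2^{n+1}$ picking out the subspace $x_{n+1} = 0$.

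Soundness of equation (\ref{subinj}) must be checked in the target: precomposing a polynomial box $p(x_1,\ldots,x_{n+1+m})$ on the $(n{+}1)$-st wire with the black unit corresponds, on the span side, to pulling back the solution set of $p$ along the monic $\{0\} \hookrightarrow \F_2$, which picks out precisely the solutions of $p(x_1,\ldots,x_n, 0, x_{n+2},\ldots, x_{n+1+m})$. This both justifies the equation semantically and shows that (\ref{subinj}) provides a valid distributive law in the loosened sense used throughout the paper — the compatibility conditions with the $\Iso(\f_2)$ coherences reduce to observing that substituting $0$ for a variable commutes, up to appropriate polynomial relabelling, with each generalized controlled-not gate, since those gates merely implement $\F_2$-linear variable substitutions.

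Fullness is then automatic: any span of the prescribed form $2^n \xleftarrowtail{e} k \xrightarrowtail{e} 2^n \xrightarrowtail{e'} 2^m$ decomposes canonically into its symmetric part (living in $\sub\Iso\f_2$ by Lemma \ref{lem:subiso}) followed by a composite of wires and black units realising the monic $e' : 2^n \rightarrowtail 2^m$ (living in $\inj(\f_2)$ by Lemma \ref{lem:injand}), and both components are hit by construction. Bijectivity on objects is clear.

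Faithfulness follows by the usual normal-form argument enabled by the distributive law. Using (\ref{subinj}) repeatedly from left to right, every morphism in $\sub\inj\f_2$ can be rewritten so that all black units occur before (below) all polynomial boxes and oracles, i.e.\ in the factored form $g \circ (\text{id} \otimes u)$ with $g \in \sub\Iso\f_2$ and $u \in \inj(\f_2)$. Two parallel morphisms with equal image in $\Span(\FSets)$ must therefore, after normalization, agree in both their $\sub\Iso\f_2$-part and their black-unit pattern: the monic factor is determined by the cardinality and placement of wires carrying zero-constraints, and once it is fixed, faithfulness of the symmetric part is Lemma \ref{lem:subiso}. The main obstacle I anticipate is the careful bookkeeping needed to verify that (\ref{subinj}) alone suffices to push every black unit past every polynomial box modulo $\Iso(\f_2)$ — in particular checking that no auxiliary coherence equation is required when a black unit is pushed past a polynomial whose variable $x_{n+1}$ also appears in a nontrivially controlled oracle, which should be handled by first absorbing the oracle into the polynomial box and then applying (\ref{subinj}).
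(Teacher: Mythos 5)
Your proposal is correct and takes essentially the same route as the paper, which simply declares the proof ``completely analogous'' to that of Lemma \ref{lem:subiso}: the obvious monoidal interpretation functor, fullness by construction, and faithfulness by pushing everything to one side via the distributive law and then deciding equality on each factor separately. One small slip to fix: equation (\ref{subinj}) read left to right pushes black units \emph{above} the polynomial boxes, so the resulting normal form is a $\sub\Iso\f_2$-morphism followed by an $\inj(\f_2)$-morphism (matching your fullness decomposition and the span shape $2^n \xleftarrowtail{e} k \xrightarrowtail{e} 2^n \xrightarrowtail{e'} 2^m$ in the statement), not the form $g\circ(\mathrm{id}\otimes u)$ with the units below that you state in the faithfulness paragraph.
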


The proof is completely analogous to as in the case of $\sub\iso\f_2$.

Any $n$ variable polynomial $p$ can be interpreted as a span of monics via the oracle $\mathcal{O}_p$, where the value of the target wire is restricted to have the value $0$.  Each such polynomial corresponds to a subobject, which complicates the matter further than in the affine case.

\begin{definition}
Consider the prop $\pr\iso\f_2$ given by the distributive law of props:

$$
\sub\inj\f_2^\op \otimes_{\sub\Iso\f_2} \sub\inj\f_2;
\begin{tikzpicture}
	\begin{pgfonlayer}{nodelayer}
		\node [style=map] (0) at (0, 0) {$\mathcal{O}_p$};
		\node [style=none] (1) at (-0.25, -0.75) {};
		\node [style=none] (2) at (-0.25, 0.75) {};
		\node [style=none] (3) at (-0.25, 1) {};
		\node [style=none] (4) at (-0.25, -1) {};
		\node [style=X] (5) at (0.25, 0.75) {};
		\node [style=X] (6) at (0.25, -0.75) {};
	\end{pgfonlayer}
	\begin{pgfonlayer}{edgelayer}
		\draw (3.center) to (2.center);
		\draw (4.center) to (1.center);
		\draw [in=-60, out=90] (6) to (0);
		\draw [in=-90, out=60] (0) to (5);
		\draw [in=120, out=-90] (2.center) to (0);
		\draw [in=90, out=-120, looseness=1.25] (0) to (1.center);
	\end{pgfonlayer}
\end{tikzpicture}
\eq{oracle}
\begin{tikzpicture}
	\begin{pgfonlayer}{nodelayer}
		\node [style=map] (0) at (-0.25, 0) {$p$};
		\node [style=none] (1) at (-0.25, -0.75) {};
		\node [style=none] (2) at (-0.25, 0.75) {};
		\node [style=none] (3) at (-0.25, 1) {};
		\node [style=none] (4) at (-0.25, -1) {};
	\end{pgfonlayer}
	\begin{pgfonlayer}{edgelayer}
		\draw (3.center) to (2.center);
		\draw (4.center) to (1.center);
		\draw (2.center) to (0);
		\draw (0) to (1.center);
	\end{pgfonlayer}
\end{tikzpicture}
$$

\end{definition}

%This is actually a distributive law, because it need only be witnessed by pushing $\eta_X$ past $\eta_X^\op$.  The only time this can't be done is when there is the target  of a generalized controlled-not not gate--or those of several generalized controlled-not gates is in between both of these generators.  In which case, the obstructing generalized controlled-not gates form an oracle for which we can apply this equation.  This is computing the apex of the span when performing a pullback.

%Note that $\pr\iso\f_2$ is not actually partial isomorphisms over $\f_2$, per se, but rather a full subcategory of partial isomorphisms of sets and functions, which is not iself a prop with respect to the Cartesian product.  It is because of such a complication that the aforementioned distributive law isn't quite a pullback.

\begin{lemma}
\label{lem:parisof}
$\pr\iso \f_2$ is a presentation for the full subcategory $(\FPinj_2,\times)$ of $(\ParIso(\FSets),\times)$ with objects powers of two.
\end{lemma}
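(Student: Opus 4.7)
The plan is to follow the pattern established in Lemmas \ref{lem:parisocb} and \ref{lem:parisoaffcb}. There is an obvious symmetric monoidal functor $F : \pr\iso\f_2 \to \FPinj_2$ sending each generator to its intended span interpretation: monics in $\sub\inj\f_2$ go to monics in $\FSets$, their formal opposites go to the co-legs of spans, and the subobject box for a polynomial $p$ goes to the subobject of $2^n$ given by the zero-set of $p$. This functor is bijective on objects, and it is full by construction, since every partial isomorphism $2^n \leftarrowtail k \rightarrowtail 2^m$ between powers of two factors as a subobject of $2^n$ followed by a monic embedding into $2^m$, combined dually on the other side.

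First I would verify well-definedness. The only equation not already verified by the constituent lemmas is the oracle equation~\ref{oracle}. Here one uses that the oracle $\mathcal{O}_p$, capped with a unit/counit on its target wire pinning that wire to $0$, semantically computes the preimage $\{\vec{x} \mid p(\vec{x}) = 0\}$, which by the construction in Lemma~\ref{lem:sub} is precisely the subobject represented by the polynomial box $p$ in $\sub_2$. The remaining identities have already been checked as part of the constituent distributive laws.

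Next I would establish faithfulness via a normal form argument. Using the distributive law of props, any morphism $f$ of $\pr\iso\f_2$ can be reduced to normal form $i^{\op} \cdot \sigma \cdot j$, where $j$ is a morphism of $\sub\inj\f_2$, $i^{\op}$ is the formal opposite of such a morphism, and $\sigma$ is a common sub-iso factor living in $\sub\Iso\f_2$. By Lemma~\ref{lem:subinj}, each of $i$ and $j$ is determined (up to provable equality) by a chosen subobject and a chosen monic embedding; by Lemma~\ref{lem:subiso}, the middle piece $\sigma$ is determined (up to isomorphism of its apex) by a subobject together with a permutation-like automorphism. Assembling these data yields a unique span of monics $2^n \leftarrowtail k \rightarrowtail 2^m$ up to isomorphism of the apex $k$, which is exactly the data of a morphism in $\FPinj_2$. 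Hence two diagrams with the same image under $F$ have normal forms that represent the same span, and the equations of the constituent props together with~\ref{oracle} suffice to prove them equal.

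The main obstacle is confirming that the described distributive law is genuinely well-defined (as in the corresponding remarks for~\ref{def:pariso:cb} and~\ref{def:parisoaffcb}), and in particular that the single equation~\ref{oracle} is enough to identify the two natural diagrammatic presentations of any subobject of $2^n$: as an abstract subobject box from $\sub_2$, and as an oracle $\mathcal{O}_p$ pinned to zero. This reduces to the fact that the quotient ring $\F_2[x_1,\ldots,x_n]/\langle x_i^2 - x_i\rangle$ — already built into $\sub_2$ — admits unique algebraic normal forms, and that two polynomials define equal subobjects of $2^n$ if and only if they are equal in this quotient. Once this coherence is in place, the normal-form argument above goes through verbatim and delivers faithfulness, completing the proof.
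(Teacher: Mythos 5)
Your route is not the paper's. The paper does not prove Lemma \ref{lem:parisof} by the ``obvious functor plus normal form'' pattern of Lemmas \ref{lem:parisocb} and \ref{lem:parisoaffcb}; it instead appeals to an external result, \cite[Thm 7.6.14]{cole}, which gives a finite \emph{complete} presentation of $(\FPinj_2,\times)$, and argues that the identities accumulated up to $\pr\iso\f_2$ are equivalent to that finite presentation. The paper is explicit about why it does this: unlike the previous lemmas in Section \ref{sec:three}, this one is meant to hold \emph{whether or not} the conjectured presentation of $\Iso(\f_2)$ by generalized controlled-not identities is complete. Your argument quietly reintroduces exactly that dependence: your faithfulness step leans on Lemmas \ref{lem:subiso} and \ref{lem:subinj}, whose own faithfulness (``decidability of equality on both factors of the distributive law'') is only as good as the completeness of the constituent presentation of the isomorphisms, which in this paper is a Conjecture, not a theorem. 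So as written your proof establishes the lemma at best conditionally on that conjecture, which is strictly weaker than what the paper claims and undercuts the stated point of this lemma.

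There is a second, more technical gap. For the analogous constructions in Sections \ref{sec:one} and \ref{sec:two}, the well-definedness of the distributive law is not waved through: it is checked in Remarks \ref{rem:pariso:cb} and \ref{rem:parisoaffcb} by explicitly rewriting every configuration in which a generator of the left factor meets a generator of the right factor. For $\sub\inj\f_2^\op \otimes_{\sub\Iso\f_2} \sub\inj\f_2$ the corresponding check is the real content: one must show that \emph{every} composite of a co-monic after a monic (a pullback of monics into $2^n$, including the interaction of a polynomial box with an opposite black counit, intersections of two zero-sets, and pullbacks of subobjects along monics) can be rewritten, using (\ref{oracle}) together with the equations of the constituent props, into the factored form $i^\op\sigma j$. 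You correctly flag this as ``the main obstacle,'' but your proposed resolution --- uniqueness of algebraic normal forms in $\F_2[x_1,\ldots,x_n]/\langle x_i^2-x_i\rangle$ --- only addresses when two subobject boxes are provably equal, not whether the rewriting terminates in the required shape for all such composites. Without that verification (or the citation to \cite{cole} that the paper uses in its place), the faithfulness and even the existence of the claimed normal form are unsupported.
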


Unlike the previous lemmas, this is not dependant on a complete presentation for the isomorphisms.
The proof is a consequence of \cite[Thm 7.6.14]{cole} where they give a finite, complete presentation for this category.  The identities up to this point are equivalent to this finite presentation, whether or not the conjectured presentation for the isomorphisms is complete.

\begin{comment}
$\pr\iso \f_2$ can be presented in terms of finitely many generators and relations.  The identities are contained in \S \ref{subsubsec:presentations:three:pinj}.
\end{comment}

\begin{definition}

Consider the prop $\pr\f_2$ given by the  pushout of the following diagram of props, given by adding a counit to the diagonal map:
$$\pr\iso\f_2 \leftarrow \surj^\op \rightarrow \cm^\op$$

\end{definition}

\begin{lemma}
\label{lem:parand}
$\pr\f_2$ is a presentation for the the full subcategory $(\FPar_2,\times)$ of $(\Par(\FSets),\times)$ with objects powers of two.
\end{lemma}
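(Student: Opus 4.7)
The plan is to mimic the proof of Lemma \ref{lem:parcb} from the linear setting, transposing it to the multiplicative setting on powers of $2$. By Lemma \ref{lem:parisof} there is already an equivalence $\pr\iso\f_2 \to (\FPinj_2,\times) \hookrightarrow (\FPar_2,\times)$. The prop $\cm^\op$ interprets in $(\FPar_2,\times)$ by sending the cocomultiplication to the diagonal $2 \to 2\times 2$ and the counit to the unique total map $2 \to 1$ (``delete''). The embedding $\surj^\op \hookrightarrow \cm^\op$ is the non-counital part, and $\surj^\op$ maps into $\pr\iso\f_2$ via the (non-counital) diagonal already present as the comultiplication of the black comonoid. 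Both composites into $(\FPar_2,\times)$ agree on $\surj^\op$, so the universal property of the pushout of props yields a semantic functor $\Phi:\pr\f_2 \to (\FPar_2,\times)$, which is clearly bijective on objects (both have object set $\N$, with $n$ meaning $2^n$).

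For fullness, every partial function $f : 2^n \rightharpoonup 2^m$ factors in $(\FPar_2,\times)$ as a partial isomorphism $\langle\iota,f\rangle : 2^n \rightharpoonup 2^n \times 2^m = 2^{n+m}$ picking out the graph of $f$, followed by the projection $2^{n+m} \to 2^m$ which deletes the first $n$ tensor factors. The partial isomorphism part lies in the image of $\pr\iso\f_2$ via Lemma \ref{lem:parisof}, and the projection is the $n$-fold tensor of the newly adjoined counit tensored with $\mathrm{id}_{2^m}$. Hence every morphism of $(\FPar_2,\times)$ is in the image of $\Phi$.

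Faithfulness is obtained by a normal-form argument combined with Zanasi's pushout-cube technique \cite{zanasi}. Using the equations of the pushout, every diagram in $\pr\f_2$ can be rewritten as a tensor of identities and counits post-composed with a map in $\pr\iso\f_2$: the counits can be pushed past the partial isomorphisms using the cocommutative comonoid equations of \eref{frob} together with the existing partial-isomorphism rewrites, because whenever a counit meets a diagonal it reproduces an identity, and whenever it meets the rest of $\pr\iso\f_2$ it corresponds semantically to marginalising a graph, which is again a partial isomorphism. Two normal forms $(g,\vec\epsilon)$ and $(g',\vec\epsilon')$ mapping to the same partial function must satisfy $\vec\epsilon = \vec\epsilon'$ (same set of forgotten wires, up to a permutation absorbed into the partial iso) and $g=g'$ in $\pr\iso\f_2$ by Lemma \ref{lem:parisof}.

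The main obstacle is justifying the above rewrite strategy rigorously: since $\surj^\op$ is not a groupoid, one cannot invoke Lack's distributive-law calculus directly, and the pushout must be analysed via Zanasi's cube of pushouts. Concretely, one has to verify that the cube with vertices $\surj^\op$, $\cm^\op$, $\pr\iso\f_2$, and $\pr\f_2$ on one face, and their semantic counterparts on the other, is a pushout of props, i.e.\ that no additional equations are forced by the interaction of the counit with the oracle generators of $\sub\inj\f_2$ beyond those already present in $\pr\iso\f_2$. This is the same obstacle that was settled in \S\ref{proof:lem:parcb} for the linear analogue, and the argument transports essentially verbatim once the distributive-law structure of Definition \ref{def:parisoaffcb}-type is in place on the multiplicative side.
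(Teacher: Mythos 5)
There is a genuine gap, and it sits exactly where you admit it does: faithfulness. Your plan is to push counits past the partial isomorphisms by hand and argue uniqueness of a normal form, but the key step (``whenever it meets the rest of $\pr\iso\f_2$ it corresponds semantically to marginalising a graph, which is again a partial isomorphism'') is a statement about the semantics, not a derivation from the equations of the pushout prop $\pr\f_2$; faithfulness requires precisely that every such semantic identification is already forced syntactically, which is the thing to be proved. Moreover, you defer the justification to ``the same obstacle that was settled in \S\ref{proof:lem:parcb}'', but that proof is not a rewriting or distributive-law argument at all, so there is no ``Definition \ref{def:parisoaffcb}-type'' structure to transport: the pushout along $\surj^\op \to \cm^\op$ is never analysed as a distributive law in the paper, and $\surj^\op$ not being a groupoid is not the relevant difficulty.

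The missing idea is the abstract one the paper actually uses: $\pr\iso\f_2 \cong (\FPinj_2,\times)$ is a \emph{discrete inverse category} (in the sense of \cite[Def.\ 4.3.1]{giles}), and by \cite[Lem.\ 3.5]{zxa} the counital completion of a discrete inverse category --- which is exactly what the pushout $\pr\iso\f_2 \leftarrow \surj^\op \rightarrow \cm^\op$ computes, namely freely adjoining a counit to the existing diagonal comonoid --- coincides with its Cartesian (restriction) completion. Since the Cartesian completion of $\FPinj_2$ is $\FPar_2$, the cube commutes and the universal map is an isomorphism, with no normal-form analysis of counit/oracle interactions needed. Your construction of the semantic functor $\Phi$ and the fullness argument (graph of $f$ as a partial isomorphism followed by deleting counits) are fine and consistent with this, but without the discrete-inverse-category lemma (or a genuinely carried-out syntactic normal-form proof replacing it) the faithfulness claim, and hence the lemma, is not established.
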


The proof is contained in \S \ref{proof:lem:parand}.
\begin{comment}
There is a particularly elegant finite presentation  contained in \S \ref{subsubsec:presentations:three:par}, which  is much more ZX-flavoured.
\end{comment}

%
%\begin{corollary}
%Give easier chacterization TODO
%\end{corollary}

\begin{definition}
Let $\sp\f_2$ denote the pushout of the diagram of props:
$$
\pr\f_2^\op\leftarrow \pr\iso\f_2 \rightarrow \pr\f_2
$$
\end{definition}

\begin{lemma}\cite{zxa}
\label{lem:spanand}
$\sp\f_2$ is a presentation for  the full subcategory $(\FSpan_2,\times)$ of $(\Span(\FSets),\times)$ with objects powers of two.
\end{lemma}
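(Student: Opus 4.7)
The plan is to apply the general theorem about constructing categories of spans via pushouts of props from \cite[Lem. 4.3]{zxa}, the same tool used for the analogous Lemmas \ref{lem:spancb} and \ref{lem:spanaffcb}. I will first invoke the universal property of the pushout to produce an interpretation functor $\llbracket-\rrbracket : \sp\f_2 \to (\FSpan_2,\times)$: partial maps in $\pr\f_2$ and reversed partial maps in $\pr\f_2^{\op}$ are sent to their corresponding spans in $\FSpan_2$, and these interpretations agree on the shared sub-prop $\pr\iso\f_2$ because a partial isomorphism is mapped by both inclusions to the same span with both legs monic. This functor is the identity on objects.

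For fullness, I will decompose an arbitrary span $2^n \xleftarrow{f} Z \xrightarrow{g} 2^m$ in $\FSpan_2$ as the composite of the reversed partial map $(2^n \leftarrow Z = Z)$ with the partial map $(Z = Z \rightarrow 2^m)$. Since $Z$ is a general finite set, not necessarily a power of $2$, I will embed it into some $2^k$ and exploit the subobject machinery of $\sub_2$ (built into $\pr\f_2$ and $\pr\iso\f_2$ via polynomials) so that both pieces lift to morphisms of $\pr\f_2^{\op}$ and $\pr\f_2$ respectively. Their composite in $\sp\f_2$ then hits the original span.

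The faithfulness will be the main obstacle. Here I will rely on the distributive-law normal form supplied by \cite[Lem. 4.3]{zxa}: every morphism of $\sp\f_2$ reduces to a unique normal form consisting of a morphism of $\pr\f_2^{\op}$ composed, across $\pr\iso\f_2$, with a morphism of $\pr\f_2$. The non-trivial check is that the equations assembled in $\pr\f_2$, $\pr\iso\f_2$ (including the subobject axioms of $\sub_2$) together with their pushout suffice to realise pullbacks of partial maps---which compute span composition in $\FSpan_2$---on the syntactic side. As in the affine case (Lemma \ref{lem:spanaffcb}), the delicate point is that $\pr\iso\f_2$ is not a groupoid, so the hypotheses of \cite[Lem. 4.3]{zxa} must be verified using the polynomial presentation of subobjects in $\sub_2$ and $\sub\inj\f_2$ rather than the groupoid-based machinery of \cite{lack}.
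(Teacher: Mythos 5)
Your overall direction (use the universal property of the defining pushout and cite \cite[Lem.~4.3]{zxa}) is the right one, but the execution plan has a genuine gap in the faithfulness step. $\sp\f_2$ is a \emph{pushout} of props, not a composite via a distributive law, and \cite[Lem.~4.3]{zxa} does not supply a ``unique normal form'' for its morphisms. Indeed uniqueness fails on the nose: a span $2^n \leftarrow Z \rightarrow 2^m$ factors as a map of $\pr\f_2^\op$ followed by a map of $\pr\f_2$ only after choosing an embedding $Z \rightarrowtail 2^k$, and different choices of $k$ and of embedding give different factorizations that are identified only by the equivalence generated by the pushout. Controlling exactly that equivalence is the whole content of faithfulness, so appealing to a unique normal form begs the question; you would instead have to argue, as in the explicit computation for Lemma~\ref{lem:spancb}, that any isomorphism of spans can be absorbed across the middle leg using the shared sub-prop, or else avoid the syntactic argument altogether. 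Your closing worry is also misplaced: the fact that $\pr\iso\f_2$ is not a groupoid was an obstruction at the earlier, distributive-law stages of the construction; the hypotheses of \cite[Lem.~4.3]{zxa} concern a discrete inverse category and its Cartesian (counital) completion, and they are checked on the semantic side, not ``using the polynomial presentation of subobjects.''

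The paper's proof does no syntactic work at all. By Lemma~\ref{lem:parisof} one has $\pr\iso\f_2 \cong (\FPinj_2,\times)$ and by Lemma~\ref{lem:parand} one has $\pr\f_2 \cong (\FPar_2,\times)$, so the defining pushout diagram $\pr\f_2^\op \leftarrow \pr\iso\f_2 \rightarrow \pr\f_2$ is isomorphic to $(\FPar_2,\times)^\op \leftarrow (\FPinj_2,\times) \rightarrow (\FPar_2,\times)$. Since $\FPinj_2$ is a discrete inverse category and $\FPar_2$ is its Cartesian completion, \cite[Lem.~4.3]{zxa} identifies the pushout of the latter diagram with $(\FSpan_2,\times)$; assembling these isomorphisms into the commuting cube shows the induced comparison $\sp\f_2 \to (\FSpan_2,\times)$ is an isomorphism of props. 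In particular there is no need to check that the equations of $\pr\f_2$ ``realise pullbacks'': that fact lives entirely in the semantic categories, where pullbacks of monos along arbitrary maps of finite sets exist and compute span composition. If you want to keep a hands-on argument, model it on the proofs of Lemmas~\ref{lem:spancb} and~\ref{lem:spanaffcb} (fullness by the evident factorization, faithfulness by absorbing the mediating isomorphism of apexes into the two legs), but the cleaner route is the transport-along-isomorphisms argument just described.
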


See \S \ref{proof:lem:spanand} for the proof.  There is a particularly elegant finite presentation 
\begin{comment}
contained in \S \ref{subsubsec:presentations:three:span}, 
\end{comment}
which  is much more ZX-flavoured, dubbed $\ZXA$.  Based on a similar observation to one made in \cite{bruni}, the author of \cite{zxa} remarks that this category is equivalent as a prop to the full subcategory of natural number matrices where the objects are powers of 2.

As remarked in \cite{zxa}, this category is equivalent to the "natural number H-box fragment" of the ZH-calculus.  That is to say, the prop generated by the Z and X spiders (coresponding to (co)copying and (co)addition) along with H-boxes which have values restricted to be natural numbers.  The interpretation of these H-boxes is given in \cite[Fig. 5]{zxa}, which we restate for reference:
$$
\begin{tikzpicture}
	\begin{pgfonlayer}{nodelayer}
		\node [style=none] (0) at (0.75, 1.25) {};
		\node [style=none] (1) at (0.75, 0.5) {};
		\node [style=none] (2) at (0.75, 2) {};
		\node [style=triflip] (3) at (0.75, 1.25) {};
	\end{pgfonlayer}
	\begin{pgfonlayer}{edgelayer}
		\draw (2.center) to (0.center);
		\draw (0.center) to (1.center);
	\end{pgfonlayer}
\end{tikzpicture}
:=
\begin{tikzpicture}
	\begin{pgfonlayer}{nodelayer}
		\node [style=none] (0) at (0.75, 1.5) {};
		\node [style=none] (1) at (1.25, 1.5) {};
		\node [style=X] (2) at (0.75, 2.25) {};
		\node [style=none] (3) at (0.75, 0.5) {};
		\node [style=andin] (4) at (0.75, 1.5) {};
		\node [style=none] (5) at (1.25, 3) {};
		\node [style=oplus] (6) at (1.25, 2.25) {};
	\end{pgfonlayer}
	\begin{pgfonlayer}{edgelayer}
		\draw [style=simple] (2) to (0.center);
		\draw [style=simple, in=-75, out=-90, looseness=2.75] (1.center) to (0.center);
		\draw [style=simple] (3.center) to (0.center);
		\draw (5.center) to (6);
		\draw (6) to (1.center);
	\end{pgfonlayer}
\end{tikzpicture}
\hspace*{1cm}
\begin{tikzpicture}
	\begin{pgfonlayer}{nodelayer}
		\node [style=H] (0) at (1, 1.25) {$n$};
		\node [style=none] (1) at (1, 0.5) {};
		\node [style=none] (2) at (1, 2) {};
	\end{pgfonlayer}
	\begin{pgfonlayer}{edgelayer}
		\draw (2.center) to (0);
		\draw (0) to (1.center);
	\end{pgfonlayer}
\end{tikzpicture}
:=
\begin{tikzpicture}
	\begin{pgfonlayer}{nodelayer}
		\node [style=none] (0) at (0.75, 1.5) {};
		\node [style=none] (1) at (1.25, 1.5) {};
		\node [style=X] (2) at (0.75, 4.75) {$\pi$};
		\node [style=none] (3) at (0.75, 0.5) {};
		\node [style=andin] (4) at (0.75, 1.5) {};
		\node [style=none] (5) at (1.25, 5.25) {};
		\node [style=triflip] (6) at (0.75, 4) {};
		\node [style=triflip] (7) at (0.75, 3) {};
		\node [style=none] (8) at (0.5, 3.5) {$n$};
		\node [style=oplus] (9) at (0.75, 2.25) {};
	\end{pgfonlayer}
	\begin{pgfonlayer}{edgelayer}
		\draw [style=simple, in=-75, out=-90, looseness=2.75] (1.center) to (0.center);
		\draw [style=simple] (3.center) to (0.center);
		\draw [style=dotted] (6) to (7);
		\draw (6) to (2);
		\draw (7) to (0.center);
		\draw [style=simple] (1.center) to (5.center);
	\end{pgfonlayer}
\end{tikzpicture}
$$
Note that arbitrary arity H-boxes can be obtained by composition with and gates.

\section{Conclusion and future work}

In this paper, we have devised a method to give modular presentations for full subcategories of categories of spans; albeit, this method is proven to work in full generality. In all three cases we we have considered, there is a fully faithful symmetric monoidal functor $F:\X\to\Y$ between a prop $\X$ and a symmetric monoidal category $\Y$.  And the categories which we eventually build up to are the full subcategories of $\Span(\Y)$ with objects in $F(\X)$.  In other words, these are structured span categories ${}_F\Span(\Y)$, as considered in \cite{structured} (or structure cospan categories ${}_{F^\op}\Csp(\Y^\op)$).  We build up to ${}_F\Span(\Y)$ first by presenting the isos and monics in $\Y$ in terms of a symmetric monoidal theory.  Then we consider the subcategory of ${}_F\Span(\Y)$ generated by monic spans spans of the form $FX\xleftarrowtail{e} Y \xrightarrowtail{e} FX$, corresponding to the new subobjects created in pullback of maps in $F(\X)$: presenting these as monoidal theories.  Then we add these subobjects to the isos and  monos by distributive law and again present these in terms of symmetric monoidal theories.  After doing so, we are able to construct a distributive law between the monics and co-monics in ${}_F\Span(\Y)$ up to isomorphisms with subobjects adjoined to all three props (this is a crucial step because in the nonlinear case, not all pullbacks exist, so one can not construct a distributive law over the isomorphisms).  We then observe that the prop generated by such a distributive law is a discrete inverse category (as defined in \cite[Def. 4.3.1]{giles}), thus one can complete it to a discrete Cartesian restriction category by adding counits to the codiagonal map (as observed in \cite[Lem. 3.5]{zxa}).  Finally, one can glue together the discrete inverse category to its opposite category up to the shared discrete inverse category to obtain ${}_F\Span(\Y)$.

This method is quite generic, and it would be useful to establish some criteria for when it can be applied categories of structured spans.
There are various ways in which this method could potentially be employed to give a modular presentation of such a category of structured spans.
Possibly the easiest such class of examples would be that given by the functor $(\Aff\Mat(k),+)\to (\Aff\Vect(k),+)$ , for an arbitrary field (or maybe even PID $k$).  More difficult, would be the class of examples given by $\FinOrd_p \to (\FinOrd,\times)$, for arbitrary prime $p>2$.  As opposed to in the case of $p=2$, the presentation for the prop of subobjects would be less simple; only in this case can systems of multivariate polynomials always be reduced to a single polynomial.  To produce a normal form in the more general case, we expect that one would have to employ the use of Gr\"obner bases.  Another model which one could pursue is that given by the functor from free, finitely generated commutative semigroups to additive monoids.  A presentation is given for the corresponding category of relations  in \cite[\S 3.3]{piedeleu}; however, it is not given a modular treatment.  This would potentially be useful because of applications in concurrency theory.

Other directions would be to try to add more phases in a modular fashion. 
Perhaps the work of \cite{duncan} could help add more phases to this picture in a modular fashion.

{\bf Acknowledgements}

The author thank Aleks Kissinger for useful discussions.
%
%
%That is to say $\Span(\f_2)$ is a presentation for the prop of ``qubit matrices'' over $\N$.  There is an alternative presentation of this category due to \cite{zxa}:
%\begin{corollary}
%$\Span(\f_2)$ can equivalently can be presented in terms of the coproduct of the monoids $X,Z,\&$ and comonoids $Z^\dag,X^\dag$
%\begin{itemize}
%\item An extra-Frobenius algebra between $X^\op$ and $Y$.
%
%\item A special-Frobenius algebra between $Z^\op$ and $Z$.
%
%\item The Lawvere theory for $\F_2^+$ represented by $X$ and $Z^\op$.
%
%\item The Lawvere theory for $\F_2^\times$  represented by  $\&$ and $Z^\op$.
%
%\item A distributive law $L_{X} \otimes_L L_{\&}  \Rightarrow  L_{\&} \otimes_L L_{X}$  in $\Kl(\T_{\Mon-\Prof}^\times)$, where $Z^\op$ is identified with prop for the diagonal monoid.
%
%\item The naturality of $\eta_{\&}$ with respect to $\mu_{\&}^\op$.
%
%\end{itemize}
%
%
%\end{corollary}

%
%The proof follows from realizing that this equation makes the triangle gate idempotent, which allows one to reduce the value of non-scalar positive natural number H-boxes to $1$, alike to the quotient described in \cite{niel} (H-boxes are first described in the paper \cite{zh}).    The other law forces all nonzero scalar H-boxes, and thus all nonzero scalars to be 0. So this is complete for qubit boolean matrices, and thus, qubit relations.
%

\nocite{ih}
\nocite{zx}
\nocite{zh}
\nocite{tof}

\bibliography{propsnew.bib}
\bibliographystyle{plain}

\appendix

\section{Proofs}
\subsection{Section \ref{sec:one}}

\subsubsection{Remark on Definition \ref{def:pariso:cb}}

\begin{remark}
\label{rem:pariso:cb}
This is actually a distributive law because the only only seemingly nontrivial situation arises when controlled not gates are sandwiched by black units/counits on their target wires.  However the following identity holds by induction on the number of controlled not gates.   For the base case of $n=0$, this follows from the bone law which we added to the distributive law.  For $n>1$, we have the following situation:
$$
\begin{tikzpicture}
	\begin{pgfonlayer}{nodelayer}
		\node [style=X] (0) at (1.5, -0.25) {};
		\node [style=oplus] (1) at (1.5, -0.75) {};
		\node [style=oplus] (2) at (1.5, -1.5) {};
		\node [style=dot] (3) at (1, -0.75) {};
		\node [style=dot] (4) at (0.5, -1.5) {};
		\node [style=none] (5) at (1, 0) {};
		\node [style=none] (6) at (0.5, 0) {};
		\node [style=none] (7) at (0.75, -1) {$\iddots$};
		\node [style=none] (8) at (1.5, -1) {$\vdots$};
		\node [style=X] (9) at (1.5, -2.5) {};
		\node [style=none] (10) at (0.5, -2.75) {};
		\node [style=none] (11) at (1, -2.75) {};
		\node [style=none] (12) at (0.75, -0.25) {$\cdots$};
		\node [style=oplus] (13) at (1.5, -2) {};
		\node [style=dot] (14) at (0, -2) {};
		\node [style=none] (15) at (0, -2.75) {};
		\node [style=none] (16) at (0, 0) {};
		\node [style=none] (18) at (1.25, -0.75) {};
	\end{pgfonlayer}
	\begin{pgfonlayer}{edgelayer}
		\draw (0) to (1);
		\draw (1) to (3);
		\draw (5.center) to (3);
		\draw (6.center) to (4);
		\draw (4) to (2);
		\draw (4) to (10.center);
		\draw (11.center) to (3);
		\draw (9) to (2);
		\draw (14) to (13);
		\draw (15.center) to (14);
		\draw (14) to (16.center);
	\end{pgfonlayer}
\end{tikzpicture}
=
\begin{tikzpicture}
	\begin{pgfonlayer}{nodelayer}
		\node [style=X] (19) at (3.5, -1.25) {};
		\node [style=none] (24) at (3.5, 0.5) {};
		\node [style=none] (25) at (3, 0.5) {};
		\node [style=X] (28) at (3.5, -1.75) {};
		\node [style=none] (29) at (3, -3.5) {};
		\node [style=none] (30) at (3.5, -3.5) {};
		\node [style=none] (34) at (2.5, -3.5) {};
		\node [style=none] (35) at (2.5, 0.5) {};
		\node [style=oplus] (36) at (3.5, -0.75) {};
		\node [style=oplus] (37) at (3.5, 0) {};
		\node [style=oplus] (38) at (3.5, -3) {};
		\node [style=oplus] (39) at (3.5, -2.25) {};
		\node [style=dot] (40) at (2.5, -3) {};
		\node [style=dot] (41) at (3, -2.25) {};
		\node [style=dot] (42) at (3, -0.75) {};
		\node [style=dot] (43) at (2.5, 0) {};
		\node [style=none] (44) at (3.5, -0.25) {$\vdots$};
		\node [style=none] (45) at (3.5, -2.5) {$\vdots$};
		\node [style=none] (46) at (2.75, -1.5) {$\cdots$};
		\node [style=none] (47) at (2.75, -2.5) {$\iddots$};
		\node [style=none] (48) at (2.75, -0.25) {$\ddots$};
	\end{pgfonlayer}
	\begin{pgfonlayer}{edgelayer}
		\draw (24.center) to (37);
		\draw (36) to (19);
		\draw (28) to (39);
		\draw (38) to (30.center);
		\draw (29.center) to (41);
		\draw (41) to (42);
		\draw (42) to (25.center);
		\draw (35.center) to (43);
		\draw (43) to (40);
		\draw (40) to (38);
		\draw (37) to (43);
		\draw (42) to (36);
		\draw (39) to (41);
	\end{pgfonlayer}
\end{tikzpicture}
$$
For $n=1$, that is:
$$
\begin{tikzpicture}
	\begin{pgfonlayer}{nodelayer}
		\node [style=X] (0) at (4, 2) {};
		\node [style=X] (1) at (4, 1) {};
		\node [style=oplus] (2) at (4, 1.5) {};
		\node [style=dot] (3) at (3.5, 1.5) {};
		\node [style=none] (4) at (3.5, 2.25) {};
		\node [style=none] (5) at (3.5, 0.75) {};
	\end{pgfonlayer}
	\begin{pgfonlayer}{edgelayer}
		\draw (1) to (0);
		\draw (4.center) to (5.center);
		\draw (3) to (2);
	\end{pgfonlayer}
\end{tikzpicture}
=
\begin{tikzpicture}
	\begin{pgfonlayer}{nodelayer}
		\node [style=X] (0) at (4, 2.5) {};
		\node [style=X] (1) at (4, 0.5) {};
		\node [style=oplus] (2) at (4, 1.5) {};
		\node [style=dot] (3) at (3.5, 1.5) {};
		\node [style=none] (4) at (3.5, 2.75) {};
		\node [style=none] (5) at (3.5, 0.25) {};
		\node [style=oplus] (6) at (3.5, 2) {};
		\node [style=dot] (7) at (4, 2) {};
		\node [style=oplus] (8) at (3.5, 1) {};
		\node [style=dot] (9) at (4, 1) {};
	\end{pgfonlayer}
	\begin{pgfonlayer}{edgelayer}
		\draw (1) to (0);
		\draw (4.center) to (5.center);
		\draw (3) to (2);
		\draw (7) to (6);
		\draw (9) to (8);
	\end{pgfonlayer}
\end{tikzpicture}
=
\begin{tikzpicture}
	\begin{pgfonlayer}{nodelayer}
		\node [style=X] (0) at (4, 2.5) {};
		\node [style=X] (1) at (4, 1.5) {};
		\node [style=none] (4) at (3.5, 2.75) {};
		\node [style=none] (5) at (3.5, 1.25) {};
	\end{pgfonlayer}
	\begin{pgfonlayer}{edgelayer}
		\draw [in=-90, out=90] (1) to (4.center);
		\draw [in=90, out=-90] (0) to (5.center);
	\end{pgfonlayer}
\end{tikzpicture}
=
\begin{tikzpicture}
	\begin{pgfonlayer}{nodelayer}
		\node [style=X] (0) at (3.5, 1.75) {};
		\node [style=X] (1) at (3.5, 2.25) {};
		\node [style=none] (4) at (3.5, 2.75) {};
		\node [style=none] (5) at (3.5, 1.25) {};
	\end{pgfonlayer}
	\begin{pgfonlayer}{edgelayer}
		\draw [in=-90, out=90] (1) to (4.center);
		\draw [in=90, out=-90] (0) to (5.center);
	\end{pgfonlayer}
\end{tikzpicture}
$$
And for the base case for $n=2$:
$$
\begin{tikzpicture}
	\begin{pgfonlayer}{nodelayer}
		\node [style=X] (0) at (4.5, 2) {};
		\node [style=X] (1) at (4.5, 0.5) {};
		\node [style=oplus] (2) at (4.5, 1) {};
		\node [style=dot] (3) at (3.5, 1) {};
		\node [style=none] (4) at (3.5, 2.25) {};
		\node [style=none] (5) at (3.5, 0.25) {};
		\node [style=oplus] (6) at (4.5, 1.5) {};
		\node [style=dot] (7) at (4, 1.5) {};
		\node [style=none] (8) at (4, 2.25) {};
		\node [style=none] (9) at (4, 0.25) {};
	\end{pgfonlayer}
	\begin{pgfonlayer}{edgelayer}
		\draw (1) to (0);
		\draw (4.center) to (5.center);
		\draw (3) to (2);
		\draw (8.center) to (9.center);
		\draw (7) to (6);
	\end{pgfonlayer}
\end{tikzpicture}
=
\begin{tikzpicture}
	\begin{pgfonlayer}{nodelayer}
		\node [style=X] (0) at (4.5, 2.5) {};
		\node [style=X] (1) at (4.5, 0.5) {};
		\node [style=oplus] (2) at (4.5, 1) {};
		\node [style=dot] (3) at (3.5, 1) {};
		\node [style=none] (4) at (3.5, 2.75) {};
		\node [style=none] (5) at (3.5, 0.25) {};
		\node [style=oplus] (6) at (4.5, 1.5) {};
		\node [style=dot] (7) at (4, 1.5) {};
		\node [style=none] (8) at (4, 2.75) {};
		\node [style=none] (9) at (4, 0.25) {};
		\node [style=oplus] (10) at (4, 2) {};
		\node [style=dot] (11) at (4.5, 2) {};
	\end{pgfonlayer}
	\begin{pgfonlayer}{edgelayer}
		\draw (1) to (0);
		\draw (4.center) to (5.center);
		\draw (3) to (2);
		\draw (8.center) to (9.center);
		\draw (7) to (6);
		\draw (11) to (10);
	\end{pgfonlayer}
\end{tikzpicture}
=
\begin{tikzpicture}
	\begin{pgfonlayer}{nodelayer}
		\node [style=X] (0) at (4.5, 3.5) {};
		\node [style=X] (1) at (4.5, 0.5) {};
		\node [style=oplus] (2) at (4.5, 1) {};
		\node [style=dot] (3) at (3.5, 1) {};
		\node [style=none] (4) at (3.5, 3.75) {};
		\node [style=none] (5) at (3.5, 0.25) {};
		\node [style=oplus] (6) at (4.5, 2.5) {};
		\node [style=dot] (7) at (4, 2.5) {};
		\node [style=none] (8) at (4, 3.75) {};
		\node [style=none] (9) at (4, 0.25) {};
		\node [style=oplus] (10) at (4, 3) {};
		\node [style=dot] (11) at (4.5, 3) {};
		\node [style=oplus] (12) at (4, 2) {};
		\node [style=dot] (13) at (4.5, 2) {};
		\node [style=oplus] (14) at (4, 1.5) {};
		\node [style=dot] (15) at (4.5, 1.5) {};
	\end{pgfonlayer}
	\begin{pgfonlayer}{edgelayer}
		\draw (1) to (0);
		\draw (4.center) to (5.center);
		\draw (3) to (2);
		\draw (8.center) to (9.center);
		\draw (7) to (6);
		\draw (11) to (10);
		\draw (13) to (12);
		\draw (15) to (14);
	\end{pgfonlayer}
\end{tikzpicture}
=
\begin{tikzpicture}
	\begin{pgfonlayer}{nodelayer}
		\node [style=X] (0) at (4.5, 2.5) {};
		\node [style=X] (1) at (4.5, 0.5) {};
		\node [style=oplus] (2) at (4.5, 1) {};
		\node [style=dot] (3) at (3.5, 1) {};
		\node [style=none] (4) at (3.5, 2.75) {};
		\node [style=none] (5) at (3.5, 0.25) {};
		\node [style=none] (8) at (4, 2.75) {};
		\node [style=none] (9) at (4, 0.25) {};
		\node [style=oplus] (14) at (4, 1.5) {};
		\node [style=dot] (15) at (4.5, 1.5) {};
		\node [style=none] (16) at (4, 2.5) {};
	\end{pgfonlayer}
	\begin{pgfonlayer}{edgelayer}
		\draw (4.center) to (5.center);
		\draw (3) to (2);
		\draw (15) to (14);
		\draw (9.center) to (14);
		\draw (1) to (15);
		\draw [in=-90, out=90] (15) to (16.center);
		\draw [in=-90, out=90] (14) to (0);
		\draw (8.center) to (16.center);
	\end{pgfonlayer}
\end{tikzpicture}
=
\begin{tikzpicture}
	\begin{pgfonlayer}{nodelayer}
		\node [style=X] (0) at (4, 2) {};
		\node [style=X] (1) at (4.5, 0.5) {};
		\node [style=oplus] (2) at (4.5, 1) {};
		\node [style=dot] (3) at (3.5, 1) {};
		\node [style=none] (4) at (3.5, 2.75) {};
		\node [style=none] (5) at (3.5, 0.25) {};
		\node [style=none] (8) at (4, 2.75) {};
		\node [style=none] (9) at (4, 0.25) {};
		\node [style=oplus] (14) at (4, 1.5) {};
		\node [style=dot] (15) at (4.5, 1.5) {};
		\node [style=none] (16) at (4.5, 2) {};
	\end{pgfonlayer}
	\begin{pgfonlayer}{edgelayer}
		\draw (4.center) to (5.center);
		\draw (3) to (2);
		\draw (15) to (14);
		\draw (9.center) to (14);
		\draw (1) to (15);
		\draw [in=-90, out=90] (14) to (0);
		\draw (15) to (16.center);
		\draw [in=-90, out=90, looseness=0.75] (16.center) to (8.center);
	\end{pgfonlayer}
\end{tikzpicture}
=
\begin{tikzpicture}
	\begin{pgfonlayer}{nodelayer}
		\node [style=X] (17) at (6, 2.5) {};
		\node [style=X] (18) at (6.5, 1) {};
		\node [style=oplus] (19) at (6.5, 2) {};
		\node [style=dot] (20) at (5.5, 2) {};
		\node [style=none] (21) at (5.5, 3.25) {};
		\node [style=none] (22) at (5.5, 0.25) {};
		\node [style=none] (23) at (6, 3.25) {};
		\node [style=none] (24) at (6, 0.25) {};
		\node [style=oplus] (25) at (6, 1.5) {};
		\node [style=dot] (26) at (6.5, 1.5) {};
		\node [style=none] (27) at (6.5, 2.5) {};
		\node [style=oplus] (28) at (6, 1) {};
		\node [style=dot] (29) at (5.5, 1) {};
	\end{pgfonlayer}
	\begin{pgfonlayer}{edgelayer}
		\draw (21.center) to (22.center);
		\draw (20) to (19);
		\draw (26) to (25);
		\draw (24.center) to (25);
		\draw (18) to (26);
		\draw [in=-90, out=90] (25) to (17);
		\draw (26) to (27.center);
		\draw [in=-90, out=90, looseness=0.75] (27.center) to (23.center);
		\draw (29) to (28);
	\end{pgfonlayer}
\end{tikzpicture}
=
\begin{tikzpicture}
	\begin{pgfonlayer}{nodelayer}
		\node [style=X] (17) at (6, 2) {};
		\node [style=X] (18) at (6.5, 1) {};
		\node [style=oplus] (19) at (6.5, 1.5) {};
		\node [style=dot] (20) at (5.5, 1.5) {};
		\node [style=none] (21) at (5.5, 2.75) {};
		\node [style=none] (22) at (5.5, 0.25) {};
		\node [style=none] (23) at (6, 2.75) {};
		\node [style=none] (24) at (6, 0.25) {};
		\node [style=none] (27) at (6.5, 2) {};
		\node [style=oplus] (28) at (6, 1) {};
		\node [style=dot] (29) at (5.5, 1) {};
	\end{pgfonlayer}
	\begin{pgfonlayer}{edgelayer}
		\draw (21.center) to (22.center);
		\draw (20) to (19);
		\draw [in=-90, out=90, looseness=0.75] (27.center) to (23.center);
		\draw (29) to (28);
		\draw (24.center) to (28);
		\draw (28) to (17);
		\draw (27.center) to (18);
	\end{pgfonlayer}
\end{tikzpicture}
=
\begin{tikzpicture}
	\begin{pgfonlayer}{nodelayer}
		\node [style=X] (17) at (6, 1.5) {};
		\node [style=X] (18) at (6, 2) {};
		\node [style=oplus] (19) at (6, 2.5) {};
		\node [style=dot] (20) at (5.5, 2.5) {};
		\node [style=none] (21) at (5.5, 3) {};
		\node [style=none] (22) at (5.5, 0.5) {};
		\node [style=none] (24) at (6, 0.5) {};
		\node [style=none] (27) at (6, 3) {};
		\node [style=oplus] (28) at (6, 1) {};
		\node [style=dot] (29) at (5.5, 1) {};
	\end{pgfonlayer}
	\begin{pgfonlayer}{edgelayer}
		\draw (21.center) to (22.center);
		\draw (20) to (19);
		\draw (29) to (28);
		\draw (24.center) to (28);
		\draw (28) to (17);
		\draw (27.center) to (18);
	\end{pgfonlayer}
\end{tikzpicture}
$$

The inductive case is essentially the same as the base case for 2.

\end{remark}

\subsubsection{Proof of Lemma \ref{lem:parcb}}
\label{proof:lem:parcb}
Recall the statement of the result:

{\bf Lemma  \ref{lem:parcb}: }
{\it $\Par(\cb_2)$ is a presentation for the prop $(\Par(\Mat(\F_2),+))$}

\begin{proof}
One must show that the following diagram commutes:

%
%\renewcommand{\cubetopbl}{$\inj(\cb_2)$}
%\renewcommand{\cubetopbr}{$\inj(\cb_2)^\op \otimes_{\Iso(\cb_2)} \inj(\cb_2)$}
%\renewcommand{\cubetopfl}{$\inj(\cb_2)\otimes_{\Iso(\cb_2)} \surj(\cb_2)^\op$}
%\renewcommand{\cubetopfr}{$\Par(\cb_2)$}\ParIso(\cb_2)
%\renewcommand{\cubebotbl}{$(\inj(\Mat(\F_2)), +)$}
%\renewcommand{\cubebotbr}{$(\Par\Iso(\Mat(\F_2)), +)$}
%\renewcommand{\cubebotfl}{$(\Par\surj(\Mat(\F_2)), +)$}
%\renewcommand{\cubebotfr}{}
%
%$$
%\xymatrixrowsep{3mm}\xymatrixcolsep{1mm}
%\xymatrix{
%                                       & \mbox{\cubetopbl} \ar[rr] \ar[dl] \ar[dd]^(.7){\cong}      &                                                  & \mbox{\cubetopbr}  \ar[dd]^{\cong} \ar[dl] \\
%\mbox{\cubetopfl} \ar[rr]  \ar[dd]_{\cong}           &                                                                                              &\mbox{\cubetopfr} \ar@{-->}[dd]^(.35){\cong}   \skewpullbackcorner[ul]              \\
%                                       &  \mbox{\cubebotbl} \ar[dl] \ar[rr]                    &                                                  & \mbox{\cubebotbr} \ar@/^1pc/[ddl] \ar[dl] \\
%\mbox{\cubebotfl} \ar@/_1pc/[drr] \ar[rr]  &                                                                                             & \mbox{\cubebotfr} \skewpullbackcorner[ul]    \ar@{-->}[d]^{\cong}  \\
%                                                   &                                                                                             & (\Par(\Mat(\F_2)),+)
%}
%$$
%

\renewcommand{\cubetopbl}{$\surj^\op$}
\renewcommand{\cubetopbr}{$\cm^\op$}
\renewcommand{\cubetopfl}{$\ParIso(\cb_2)$}
\renewcommand{\cubetopfr}{$\Par(\cb_2)$}
\renewcommand{\cubebotbl}{$\surj^\op$ }
\renewcommand{\cubebotbr}{$\cm^\op$ }
\renewcommand{\cubebotfl}{$\ParIso(\Mat(\F_2)),+)$ }
\renewcommand{\cubebotfr}{}

$$
\xymatrixrowsep{2mm}\xymatrixcolsep{2mm}
\xymatrix{
                                       & \mbox{\cubetopbl} \ar[rr] \ar[dl] \ar@{=}[dd]     &                                                  & \mbox{\cubetopbr} \ar@{=}[dd] \ar[dl] \\
\mbox{\cubetopfl} \ar[rr]  \ar[dd]_{\cong}           &                                                                                              &\mbox{\cubetopfr} \ar@{-->}[dd]^(.35){\cong}   \skewpullbackcorner[ul]              \\
                                       &  \mbox{\cubebotbl} \ar[dl] \ar[rr]                    &                                                  & \mbox{\cubebotbr} \ar@/^1pc/[ddl] \ar[dl] \\
\mbox{\cubebotfl} \ar@/_1pc/[drr] \ar[rr]  &                                                                                             & \mbox{\cubebotfr} \skewpullbackcorner[ul]    \ar@{-->}[d]^{\cong}  \\
                                                   &                                                                                             & (\Par(\Mat(\F_2)),+)
}
$$

It doesn't take to much work to show that $\ParIso(\cb_2)\cong\ParIso(\Mat(\F_2))$ is a discrete inverse category (defined in \cite[\S 4.3]{giles}).
We know that the counital completion of a discrete inverse category is the same as its Cartesian completion from \cite[Lem. 3.5]{zxa}; moreover, the Cartesian completion of  $\ParIso(\Mat(\F_2))$ is $\Par(\Mat(\F_2))$.  So this diagram commutes as a consequence.

\end{proof}

\subsubsection{Proof of Lemma \ref{lem:spancb}}
\label{proof:lem:spancb}
Recall the statement of the result:

{\bf Lemma \ref{lem:spancb}: }
{\it $\Span(\cb_2)$ is a presentation for the prop $(\Span(\Mat(\F_2)), +)$.}

\begin{proof}

\renewcommand{\cubetopbl}{$\inj(\cb_2)^\op \otimes_{\Iso(\cb_2)} \inj(\cb_2)$}
\renewcommand{\cubetopbr}{$\Par(\cb_2)$}
\renewcommand{\cubetopfl}{$\Par(\cb_2)^\op$}
\renewcommand{\cubetopfr}{$\Span(\cb_2)$}
\renewcommand{\cubebotbl}{$(\Par\Iso(\Mat(\F_2)),+)$ }
\renewcommand{\cubebotbr}{$(\Par(\Mat(\F_2)),+)$ }
\renewcommand{\cubebotfl}{$(\Par(\Mat(\F_2)),+)^\op$ }
\renewcommand{\cubebotfr}{}

$$
\xymatrixrowsep{2mm}\xymatrixcolsep{1mm}
\xymatrix{
                                       & \mbox{\cubetopbl} \ar[rr] \ar[dl] \ar[dd]^(.7){\cong}      &                                                  & \mbox{\cubetopbr}  \ar[dd]^{\cong} \ar[dl] \\
\mbox{\cubetopfl} \ar[rr]  \ar[dd]_{\cong}           &                                                                                              &\mbox{\cubetopfr} \ar@{-->}[dd]^(.35){\cong}   \skewpullbackcorner[ul]              \\
                                       &  \mbox{\cubebotbl} \ar[dl] \ar[rr]                    &                                                  & \mbox{\cubebotbr} \ar@/^1pc/[ddl] \ar[dl] \\
\mbox{\cubebotfl} \ar@/_1pc/[drr] \ar[rr]  &                                                                                             & \mbox{\cubebotfr} \skewpullbackcorner[ul]    \ar@{-->}[d]^{\cong}_F \\
                                                   &                                                                                             & (\Span(\Mat(\F_2)),+)
}
$$

The cube easily commutes.  What remains to be shown is that the universal map $F$ is an isomorphism of props.  It is clearly the identity on objects, so we just need to show it is full and faithful.

It is clearly full as any span $ n \xleftarrow{ f}  k \xrightarrow{g } m$, we have:
$$
F\left( (n \xleftarrow{f} k = k);(k = k \xrightarrow{g} m) \right)=n \xleftarrow{ f}  k \xrightarrow{g } m
$$ 
For faithfulness, we must observe given for any two isomorphic maps in $\Span(\Mat(\F_2))$:
$$
\xymatrixrowsep{2mm}\xymatrixcolsep{6mm}
\xymatrix{
          & k \ar[dl]_{f'} \ar[dd]_{\cong}^{h} \ar[dr]^{g'}\\
n  &                                                                                                    & m\\
         & k \ar[ul]^{f} \ar[ur]_{g}\\
}
$$
Then in the domain of $F$, we have:
{
\xymatrixrowsep{0mm}\xymatrixcolsep{1.7mm}
\begin{align*}
&
\xymatrix{
   & k \ar[dl]_f \ar@{=}[dr]\\
n &                                      &k
};
\xymatrix{
   & k \ar[dr]^g \ar@{=}[dl]\\
k &                                      &m
}
%&
 =
\xymatrix{
   & k \ar[dl]_f \ar@{=}[dr]\\
n &                                      &k
};
\xymatrix{
   & k \ar@{=}[dl] \ar@{=}[dr]\\
k &                                             & k\\
   & k \ar[ul]^h \ar[ur]_h \ar[uu]^\cong_h
};
\xymatrix{
   & k \ar[dr]^g \ar@{=}[dl]\\
k &                                      &m
}\\
 &=
\xymatrix{
   & k \ar[dl]_f \ar@{=}[dr]\\
n &                                      &k
};
\xymatrix{
   & k \ar[dl]_h \ar@{=}[dr]\\
k &                                         & k
};
\xymatrix{
   & k \ar[dr]^h \ar@{=}[dl]\\
k &                                         & k
};
\xymatrix{
   & k \ar[dr]^g \ar@{=}[dl]\\
k &                                      &m
}
%\\&
=
\xymatrix{
            &                                                        &k \ar[dl]_{h} \ar@{=}[dr] \ar@/_1.2pc/[ddll]_{f'}\\
            & k \ar@{=}[dr] \ar[dl]^{f}&                                                          & k \ar@{=}[dr] \ar[dl]_{h}\\
n &                                                         & k                                             &                                                         &k
};
\xymatrix{
            &                                                        & k \ar[dr]^{h} \ar@{=}[dl] \ar@/^1.2pc/[ddrr]^{g'}  \\
            & k \ar[dr]^{h}   \ar@{=}[dl] &                                                          & k \ar@{=}[dl] \ar[dr]_{g}\\
k &                                                         & k                                             &                                                         &m
}
\end{align*}
}
 
\end{proof}

\subsection{Section \ref{sec:two}}

\subsubsection{Proof of Lemma \ref{lem:injaffcb}}
\label{proof:lem:injaffcb}

Recall the statement of the result:

{\bf Lemma \ref{lem:injaffcb}:}
{\it $\inj(\Aff\cb_2)$ is a presentation for the prop $(\inj(\Aff\Mat(\F_2)),+)$.}

\begin{proof}
Consider the following diagram:

\renewcommand{\cubetopbl}{$\Iso(\cb_2)$}
\renewcommand{\cubetopbr}{$\Iso(\Aff\cb_2)$}
\renewcommand{\cubetopfl}{$\inj(\cb_2)$}
\renewcommand{\cubetopfr}{$\inj(\Aff\cb_2)$}
\renewcommand{\cubebotbl}{$(\Iso(\Mat(\F_2)),+)$ }
\renewcommand{\cubebotbr}{$(\Iso(\Aff\Mat(\F_2)),+)$ }
\renewcommand{\cubebotfl}{$(\inj(\Mat(\F_2)),+)$ }
\renewcommand{\cubebotfr}{}

$$
\xymatrixrowsep{2mm}\xymatrixcolsep{1.5mm}
\xymatrix{
                                       & \mbox{\cubetopbl} \ar[rr] \ar[dl] \ar[dd]^(.7){\cong}      &                                                  & \mbox{\cubetopbr}  \ar[dd]^{\cong} \ar[dl] \\
\mbox{\cubetopfl} \ar[rr]  \ar[dd]_{\cong}           &                                                                                              &\mbox{\cubetopfr} \ar@{-->}[dd]^(.35){\cong}   \skewpullbackcorner[ul]              \\
                                       &  \mbox{\cubebotbl} \ar[dl] \ar[rr]                    &                                                  & \mbox{\cubebotbr} \ar@/^1pc/[ddl] \ar[dl] \\
\mbox{\cubebotfl} \ar@/_1pc/[drr] \ar[rr]  &                                                                                             & \mbox{\cubebotfr} \skewpullbackcorner[ul]    \ar@{-->}[d]^{\cong}_F  \\
                                                   &                                                                                             & (\inj(\Aff\Mat(\F_2)),+)
}
$$

 The rear and left faces of the cube commute and the vertical maps are all isomorphisms. Therefore, the whole cube commutes via universal property of the pushout, with the upper universal map being an isomorphism.

We seek to show that the lower universal map  $F$ is also an isomorphism.  It is clearly the identity on objects, so we just have to show fullness and faithfulness.

For fullness, consider any map $n\xrightarrowtail{(A,x)} m$ in $(\inj(\Aff\Mat(\F_2)),+)$.  Note that this can be factored into:
$$
n\xrightarrowtail{(A,0)} m \xrightarrowiso{(1,x)}  m
$$
Which lies in the image of $F$ as $m \xrightarrowiso{(1,x)} m$ is an isomorphism.

For faithfulness, we show that every map in $(\Iso(\Aff\Mat(\F_2)),+)+_{(\Iso(\Mat(\F_2)),+)} (\inj(\Mat(\F_2)),+)$ can be factored uniquely in this way. 
There are two cases:
$$
\left( n \xrightarrowtail{ A} m ; m \xrightarrowiso{(B, x)} m \right)
= \left( n \xrightarrowtail{ A} m ; m \xrightarrowiso{(B, 0)} m; m \xrightarrowiso{(1, x)}  m \right)
= \left( n \xrightarrowtail{ A;B}  m\xrightarrowiso{(1, x)}  m \right)
$$
$$
\left(n \xrightarrowtail{ (A,x)} m ; m \xrightarrowiso{B} m \right)
= \left( n \xrightarrowtail{ (A,0)}m; m \xrightarrowiso{(1,x)} m ; m \xrightarrowiso{B} m \right)
= \left( n \xrightarrowtail{ A }m; m \xrightarrowiso{(B,B(x))} m  \right)
= \left( n \xrightarrowtail{ A;B }m; m \xrightarrowiso{(1,B(x))} m  \right)
$$
So every map in this pushout has the correct form, which is unique by construction.
\end{proof}

\subsubsection{Remark on Definition \ref{def:parisoaffcb}}

\begin{remark}
\label{rem:parisoaffcb}
$ (\inj(\Aff\cb_2)^{+1})^\op \otimes_{\Iso(\Aff\cb_2)^{+1}}  \inj(\Aff\cb_2)^{+1}$ is actually a distributive law because the only only nontrivial situation arises when controlled-not gates are sandwiched between black, or black $\pi$ units/counits on their target wires.  The case where there are no controlled not gates in between is resolved by the new axiom we have added.  When there are more controlled-not gates, they can be pushed past each other as follows:
$$
\begin{tikzpicture}
	\begin{pgfonlayer}{nodelayer}
		\node [style=X] (0) at (2.5, 0.25) {};
		\node [style=oplus] (1) at (2.5, -0.75) {};
		\node [style=oplus] (2) at (2.5, -1.5) {};
		\node [style=dot] (3) at (1.5, -0.75) {};
		\node [style=dot] (4) at (1, -1.5) {};
		\node [style=none] (5) at (1.5, 0.5) {};
		\node [style=none] (6) at (1, 0.5) {};
		\node [style=none] (7) at (1.25, -1) {$\iddots$};
		\node [style=none] (8) at (2.5, -1) {$\vdots$};
		\node [style=X] (9) at (2.5, -2) {$\pi$};
		\node [style=none] (10) at (1, -2.25) {};
		\node [style=none] (11) at (1.5, -2.25) {};
		\node [style=none] (12) at (1.25, -0.25) {$\cdots$};
		\node [style=none] (17) at (2, -2.25) {};
		\node [style=none] (18) at (2, 0.5) {};
		\node [style=oplus] (19) at (2.5, -0.25) {};
		\node [style=dot] (20) at (2, -0.25) {};
	\end{pgfonlayer}
	\begin{pgfonlayer}{edgelayer}
		\draw (0) to (1);
		\draw (1) to (3);
		\draw (5.center) to (3);
		\draw (6.center) to (4);
		\draw (4) to (2);
		\draw (4) to (10.center);
		\draw (11.center) to (3);
		\draw (9) to (2);
		\draw (17.center) to (20);
		\draw (20) to (18.center);
		\draw (19) to (20);
	\end{pgfonlayer}
\end{tikzpicture}
=
\begin{tikzpicture}
	\begin{pgfonlayer}{nodelayer}
		\node [style=X] (0) at (2.5, 0.25) {};
		\node [style=oplus] (1) at (2.5, -0.75) {};
		\node [style=oplus] (2) at (2.5, -1.5) {};
		\node [style=dot] (3) at (1.5, -0.75) {};
		\node [style=dot] (4) at (1, -1.5) {};
		\node [style=none] (5) at (1.5, 0.5) {};
		\node [style=none] (6) at (1, 0.5) {};
		\node [style=none] (7) at (1.25, -1) {$\iddots$};
		\node [style=none] (8) at (2.5, -1) {$\vdots$};
		\node [style=X] (9) at (2.5, -2.5) {};
		\node [style=none] (10) at (1, -2.75) {};
		\node [style=none] (11) at (1.5, -2.75) {};
		\node [style=none] (12) at (1.25, -0.25) {$\cdots$};
		\node [style=none] (17) at (2, -2.75) {};
		\node [style=none] (18) at (2, 0.5) {};
		\node [style=oplus] (19) at (2.5, -0.25) {};
		\node [style=dot] (20) at (2, -0.25) {};
		\node [style=oplus] (21) at (2.5, -2) {};
	\end{pgfonlayer}
	\begin{pgfonlayer}{edgelayer}
		\draw (0) to (1);
		\draw (1) to (3);
		\draw (5.center) to (3);
		\draw (6.center) to (4);
		\draw (4) to (2);
		\draw (4) to (10.center);
		\draw (11.center) to (3);
		\draw (9) to (2);
		\draw (17.center) to (20);
		\draw (20) to (18.center);
		\draw (19) to (20);
	\end{pgfonlayer}
\end{tikzpicture}
=
\begin{tikzpicture}
	\begin{pgfonlayer}{nodelayer}
		\node [style=X] (0) at (2.5, 0.75) {};
		\node [style=oplus] (1) at (2.5, -0.75) {};
		\node [style=oplus] (2) at (2.5, -1.5) {};
		\node [style=dot] (3) at (1.5, -0.75) {};
		\node [style=dot] (4) at (1, -1.5) {};
		\node [style=none] (5) at (1.5, 1.25) {};
		\node [style=none] (6) at (1, 1.25) {};
		\node [style=none] (7) at (1.25, -1) {$\iddots$};
		\node [style=none] (8) at (2.5, -1) {$\vdots$};
		\node [style=X] (9) at (2.5, -2) {};
		\node [style=none] (10) at (1, -2.25) {};
		\node [style=none] (11) at (1.5, -2.25) {};
		\node [style=none] (12) at (1.25, 0.25) {$\cdots$};
		\node [style=none] (17) at (2, -2.25) {};
		\node [style=none] (18) at (2, 1.25) {};
		\node [style=oplus] (19) at (2.5, 0.25) {};
		\node [style=dot] (20) at (2, 0.25) {};
		\node [style=oplus] (21) at (2, -0.25) {};
		\node [style=oplus] (22) at (2, 0.75) {};
	\end{pgfonlayer}
	\begin{pgfonlayer}{edgelayer}
		\draw (0) to (1);
		\draw (1) to (3);
		\draw (5.center) to (3);
		\draw (6.center) to (4);
		\draw (4) to (2);
		\draw (4) to (10.center);
		\draw (11.center) to (3);
		\draw (9) to (2);
		\draw (17.center) to (20);
		\draw (20) to (18.center);
		\draw (19) to (20);
	\end{pgfonlayer}
\end{tikzpicture}
=
\begin{tikzpicture}
	\begin{pgfonlayer}{nodelayer}
		\node [style=X] (0) at (2.5, 0.25) {};
		\node [style=oplus] (1) at (2.5, -0.75) {};
		\node [style=oplus] (2) at (2.5, -1.5) {};
		\node [style=dot] (3) at (1.5, -0.75) {};
		\node [style=dot] (4) at (1, -1.5) {};
		\node [style=none] (5) at (1.5, 1) {};
		\node [style=none] (6) at (1, 1) {};
		\node [style=none] (7) at (1.25, -1) {$\iddots$};
		\node [style=none] (8) at (2.5, -1) {$\vdots$};
		\node [style=X] (9) at (2.5, -2) {};
		\node [style=none] (10) at (1, -2.75) {};
		\node [style=none] (11) at (1.5, -2.75) {};
		\node [style=none] (12) at (1.25, -0.25) {$\cdots$};
		\node [style=none] (17) at (2, -2.75) {};
		\node [style=none] (18) at (2, 1) {};
		\node [style=oplus] (19) at (2.5, -0.25) {};
		\node [style=dot] (20) at (2, -0.25) {};
		\node [style=oplus] (21) at (2, -2.25) {};
		\node [style=oplus] (22) at (2, 0.5) {};
	\end{pgfonlayer}
	\begin{pgfonlayer}{edgelayer}
		\draw (0) to (1);
		\draw (1) to (3);
		\draw (5.center) to (3);
		\draw (6.center) to (4);
		\draw (4) to (2);
		\draw (4) to (10.center);
		\draw (11.center) to (3);
		\draw (9) to (2);
		\draw (17.center) to (20);
		\draw (20) to (18.center);
		\draw (19) to (20);
	\end{pgfonlayer}
\end{tikzpicture}
=
\begin{tikzpicture}
	\begin{pgfonlayer}{nodelayer}
		\node [style=none] (24) at (4.5, 1.75) {};
		\node [style=none] (25) at (4, 1.75) {};
		\node [style=X] (28) at (5, -1) {};
		\node [style=none] (29) at (4, -3.25) {};
		\node [style=none] (30) at (4.5, -3.25) {};
		\node [style=none] (32) at (5, -3.25) {};
		\node [style=none] (33) at (5, 1.75) {};
		\node [style=oplus] (37) at (5, 1.25) {};
		\node [style=oplus] (38) at (5, -1.5) {};
		\node [style=dot] (39) at (4.5, -1.5) {};
		\node [style=oplus] (40) at (5, -2.25) {};
		\node [style=dot] (41) at (4, -2.25) {};
		\node [style=oplus] (42) at (5, 0) {};
		\node [style=dot] (43) at (4.5, 0) {};
		\node [style=oplus] (44) at (5, 0.75) {};
		\node [style=dot] (45) at (4, 0.75) {};
		\node [style=oplus] (46) at (5, -2.75) {};
		\node [style=X] (47) at (5, -0.5) {};
		\node [style=none] (48) at (4.25, -0.75) {$\cdots$};
		\node [style=none] (49) at (4.25, -1.75) {$\iddots$};
		\node [style=none] (50) at (4.25, 0.5) {$\ddots$};
		\node [style=none] (51) at (5, 0.5) {$\vdots$};
		\node [style=none] (52) at (5, -1.75) {$\vdots$};
	\end{pgfonlayer}
	\begin{pgfonlayer}{edgelayer}
		\draw (38) to (39);
		\draw (40) to (41);
		\draw (42) to (43);
		\draw (44) to (45);
		\draw (29.center) to (25.center);
		\draw (24.center) to (30.center);
		\draw (38) to (28);
		\draw (40) to (32.center);
		\draw (47) to (42);
		\draw (44) to (33.center);
	\end{pgfonlayer}
\end{tikzpicture}
=
\begin{tikzpicture}
	\begin{pgfonlayer}{nodelayer}
		\node [style=none] (24) at (4.5, 1.25) {};
		\node [style=none] (25) at (4, 1.25) {};
		\node [style=X] (28) at (5, -1) {$\pi$};
		\node [style=none] (29) at (4, -2.75) {};
		\node [style=none] (30) at (4.5, -2.75) {};
		\node [style=none] (32) at (5, -2.75) {};
		\node [style=none] (33) at (5, 1.25) {};
		\node [style=oplus] (38) at (5, -1.5) {};
		\node [style=dot] (39) at (4.5, -1.5) {};
		\node [style=oplus] (40) at (5, -2.25) {};
		\node [style=dot] (41) at (4, -2.25) {};
		\node [style=oplus] (42) at (5, 0) {};
		\node [style=dot] (43) at (4.5, 0) {};
		\node [style=oplus] (44) at (5, 0.75) {};
		\node [style=dot] (45) at (4, 0.75) {};
		\node [style=X] (47) at (5, -0.5) {$\pi$};
		\node [style=none] (48) at (4.25, -0.75) {$\cdots$};
		\node [style=none] (49) at (4.25, -1.75) {$\iddots$};
		\node [style=none] (50) at (4.25, 0.5) {$\ddots$};
		\node [style=none] (51) at (5, 0.5) {$\vdots$};
		\node [style=none] (52) at (5, -1.75) {$\vdots$};
	\end{pgfonlayer}
	\begin{pgfonlayer}{edgelayer}
		\draw (38) to (39);
		\draw (40) to (41);
		\draw (42) to (43);
		\draw (44) to (45);
		\draw (29.center) to (25.center);
		\draw (24.center) to (30.center);
		\draw (38) to (28);
		\draw (40) to (32.center);
		\draw (47) to (42);
		\draw (44) to (33.center);
	\end{pgfonlayer}
\end{tikzpicture}
$$
%Notice that the choice of which wires to straighten out the zig-zag is arbitrary.
\end{remark}

\subsubsection{Proof of Lemma \ref{lem:paraffcb}}
\label{proof:lem:paraffcb}

Recall the statement of the result:

{\bf Lemma \ref{lem:paraffcb}: }
{\it  $\pr\Aff\cb_2$ is a presentation for the prop $(\Par(\Aff\Vect(\F_2))^*,+)$.}

\begin{proof}
%\renewcommand{\cubetopbl}{$\inj(\Aff\cb_2)$}
%\renewcommand{\cubetopbr}{$\inj(\Aff\cb_2)+1\otimes_{\Iso(\Aff\cb_2)+1} \inj(\Aff\cb_2)+1^\op$}
%\renewcommand{\cubetopfl}{$\inj(\Aff\cb_2)\otimes_{\Iso(\Aff\cb_2)} \surj(\Aff\cb_2)^\op$}
%\renewcommand{\cubetopfr}{$\Par(\Aff\cb_2)$}
%\renewcommand{\cubebotbl}{$(\inj(\Aff\Mat(\F_2)),+)$ }
%\renewcommand{\cubebotbr}{$(\ParIso(\Aff\Vect(\F_2))^*,+)$ }
%\renewcommand{\cubebotfl}{$(\Par\surj(\Aff\Mat(\F_2)),+)^\op$ }
%\renewcommand{\cubebotfr}{}
%
%$$
%\xymatrixrowsep{3mm}\xymatrixcolsep{-10mm}
%\xymatrix{
%                                       & \mbox{\cubetopbl} \ar[rr] \ar[dl] \ar[dd]^(.7){\cong}      &                                                  & \mbox{\cubetopbr}  \ar[dd]^{\cong} \ar[dl] \\
%\mbox{\cubetopfl} \ar[rr]  \ar[dd]_{\cong}           &                                                                                              &\mbox{\cubetopfr} \ar@{-->}[dd]^(.35){\cong}   \skewpullbackcorner[ul]              \\
%                                       &  \mbox{\cubebotbl} \ar[dl] \ar[rr]                    &                                                  & \mbox{\cubebotbr} \ar@/^1pc/[ddl] \ar[dl] \\
%\mbox{\cubebotfl} \ar@/_1pc/[drr] \ar[rr]  &                                                                                             & \mbox{\cubebotfr} \skewpullbackcorner[ul]    \ar@{-->}[d]^{\cong}  \\
%                                                   &                                                                                             & (\Par(\Aff\Mat(\F_2)),+)
%}
%$$

\renewcommand{\cubetopbl}{$\surj^\op$}
\renewcommand{\cubetopbr}{$\cm^\op$}
\renewcommand{\cubetopfl}{$\pr\iso\Aff\cb_2$}
\renewcommand{\cubetopfr}{$\pr\Aff\cb_2$}
\renewcommand{\cubebotbl}{$\surj^\op$ }
\renewcommand{\cubebotbr}{$\cm^\op$ }
\renewcommand{\cubebotfl}{$(\ParIso(\Aff\Vect(\F_2))^*,+)$ }
\renewcommand{\cubebotfr}{}

$$
\xymatrixrowsep{2mm}\xymatrixcolsep{1mm}
\xymatrix{
                                       & \mbox{\cubetopbl} \ar[rr] \ar[dl] \ar@{=}[dd]     &                                                  & \mbox{\cubetopbr} \ar@{=}[dd] \ar[dl] \\
\mbox{\cubetopfl} \ar[rr]  \ar[dd]_{\cong}           &                                                                                              &\mbox{\cubetopfr} \ar@{-->}[dd]^(.35){\cong}   \skewpullbackcorner[ul]              \\
                                       &  \mbox{\cubebotbl} \ar[dl] \ar[rr]                    &                                                  & \mbox{\cubebotbr} \ar@/^1pc/[ddl] \ar[dl] \\
\mbox{\cubebotfl} \ar@/_1pc/[drr] \ar[rr]  &                                                                                             & \mbox{\cubebotfr} \skewpullbackcorner[ul]    \ar@{-->}[d]_F^{\cong}  \\
                                                   &                                                                                             & (\Par(\Aff\Vect(\F_2))^*,+) 
}
$$

We know that $\pr\iso\Aff\cb_2\cong \ParIso(\Aff\Vect(\F_2))^*,+)$ is a discrete inverse category by \cite[Prop. 3.4]{cnot}.

The cube commutes by the universal property of the pushout, as before.

We just have to show that the universal map $F$ is an isomorphism.  It is clearly the identity on objects, so we just have to show it is full and faithful.
This follows from essentially the same argument as in the linear case.

\end{proof}

\subsubsection{Proof of Lemma \ref{lem:spanaffcb}}
\label{proof:lem:spanaffcb}

Recall the statement of the result:

{\bf Lemma \ref{lem:spanaffcb}: }
{\it  $\sp\Aff\cb_2$ is a presentation for the prop $(\Span(\Aff\Vect(\F_2))^*,+)$.}

\begin{proof}
\renewcommand{\cubetopbl}{$\pr\iso\Aff\cb_2$}
\renewcommand{\cubetopbr}{$\pr\Aff\cb_2$}
\renewcommand{\cubetopfl}{$\pr\Aff\cb_2^\op$}
\renewcommand{\cubetopfr}{$\sp\Aff\cb_2$}
\renewcommand{\cubebotbl}{$(\ParIso(\Aff\Vect(\F_2))^*,+)$ }
\renewcommand{\cubebotbr}{$(\Par(\Aff\Vect(\F_2))^*,+)$ }
\renewcommand{\cubebotfl}{$(\Par(\Aff\Vect(\F_2))^*,+)^\op$ }
\renewcommand{\cubebotfr}{}

$$
\hspace*{-1cm}
\xymatrixrowsep{2mm}\xymatrixcolsep{.5mm}
\xymatrix{
                                       & \mbox{\cubetopbl} \ar[rr] \ar[dl] \ar[dd]^(.7){\cong}      &                                                  & \mbox{\cubetopbr}  \ar[dd]^{\cong} \ar[dl] \\
\mbox{\cubetopfl} \ar[rr]  \ar[dd]_{\cong}           &                                                                                              &\mbox{\cubetopfr} \ar@{-->}[dd]^(.35){\cong}   \skewpullbackcorner[ul]              \\
                                       &  \mbox{\cubebotbl} \ar[dl] \ar[rr]                    &                                                  & \mbox{\cubebotbr} \ar@/^1pc/[ddl] \ar[dl] \\
\mbox{\cubebotfl} \ar@/_1pc/[drr] \ar[rr]  &                                                                                             & \mbox{\cubebotfr} \skewpullbackcorner[ul]    \ar@{-->}[d]_F^{\cong}  \\
                                                   &                                                                                             & (\Span(\Aff\Vect(\F_2))^*,+)
}
$$

 The rear and left faces of the cube commute and the vertical maps are all isomorphisms. Therefore, the whole cube commutes by the universal property of the pushout, with the upper universal map being an isomorphism.

We seek to show that the lower universal map  $F$ is also an isomorphism.  It is clearly the identity on objects, so we just have to show fullness and faithfulness.

For fullness, let us first consider the nonempty case; that is a map $\F_2^n \xleftarrow{(A,x)} \F_2^k \xrightarrow{(B,y)}\F^m$ in $(\Span(\Aff\Vect(\F_2))^*,+)$.  This is in the image of the following diagram under $F$:
$$
(\F_2^n \xleftarrow{(A,x)} \F_2^k  = \F_2^k); (\F_2^k = \F_2^k  \xrightarrow{(B,y)}\F^m)
$$ 
Otherwise, consider a map of the form  $\F_2^n \xleftarrow{?} \emptyset  \xrightarrow{?}\F^m$.  This the image of the following diagram:
$$
(\F_2^n \xleftarrow{?} \emptyset \xrightarrow {?} \F_2^0  );(\F_2^0 \xleftarrow{?} \emptyset  \xrightarrow{?}\F^m)
$$
For faithfulness, again, we separate the proof into two cases.  The functor is faithful on diagrams in $(\Span(\Aff\Vect(\F_2))^*,+)$ with nonempty apex by the same argument as in Lemma \ref{lem:spancb}.
The case for spans with empty apex follows immediately as the only endomorphism on the empty set is the identity; thus,  isomorphic spans must be equal on the nose.

\end{proof}

\subsection{Section \ref{sec:three}}

\subsubsection{Proof of Lemma \ref{lem:parand}}
\label{proof:lem:parand}
Recall the statement of the result:

{\bf Lemma \ref{lem:parand}: }
{\it $\pr\f_2$ is a presentation for the the full subcategory $(\FPar_2,\times)$ of $(\Par(\FSets),\times)$ with objects powers of two.}

\begin{proof}
One has to show that the following diagram commutes:

\renewcommand{\cubetopbl}{$\surj^\op$}
\renewcommand{\cubetopbr}{$\cm^\op$}
\renewcommand{\cubetopfl}{$\pr\iso\f_2$}
\renewcommand{\cubetopfr}{$\pr\f_2$}
\renewcommand{\cubebotbl}{$\surj^\op$ }
\renewcommand{\cubebotbr}{$\cm^\op$ }
\renewcommand{\cubebotfl}{$(\FPinj_2,\times)$ }
\renewcommand{\cubebotfr}{}

$$
\xymatrixrowsep{2mm}\xymatrixcolsep{2mm}
\xymatrix{
                                       & \mbox{\cubetopbl} \ar[rr] \ar[dl] \ar@{=}[dd]     &                                                  & \mbox{\cubetopbr} \ar@{=}[dd] \ar[dl] \\
\mbox{\cubetopfl} \ar[rr]  \ar[dd]_{\cong}           &                                                                                              &\mbox{\cubetopfr} \ar@{-->}[dd]^(.35){\cong}   \skewpullbackcorner[ul]              \\
                                       &  \mbox{\cubebotbl} \ar[dl] \ar[rr]                    &                                                  & \mbox{\cubebotbr} \ar@/^1pc/[ddl] \ar[dl] \\
\mbox{\cubebotfl} \ar@/_1pc/[drr] \ar[rr]  &                                                                                             & \mbox{\cubebotfr} \skewpullbackcorner[ul]    \ar@{-->}[d]^{\cong}  \\
                                                   &                                                                                             & (\FPar_2,\times)
}
$$

Again, the proof is essentially the same as for the linear and affine cases; the only difference being that the Cartesian completion of $\FPinj_2$ is $\FPar_2$.

\end{proof}

\subsubsection{Proof of Lemma \ref{lem:spanand}}
\label{proof:lem:spanand}
Recall the statement of the result:

{\bf Lemma \ref{lem:spanand}: }
{\sf $\sp\f_2$ is a presentation for  the full subcategory $(\FSpan_2,\times)$ of $(\Span(\FSets),\times)$ with objects powers of two.}

\begin{proof}
One has to show that the following diagram commutes:

\renewcommand{\cubetopbl}{$\pr\iso\f_2$}
\renewcommand{\cubetopbr}{$ \pr\f_2$}
\renewcommand{\cubetopfl}{$\pr\f_2^\op$}
\renewcommand{\cubetopfr}{$\sp\f_2$}
\renewcommand{\cubebotbl}{$(\FPinj_2,\times)$ }
\renewcommand{\cubebotbr}{$(\FPar_2,\times)$ }
\renewcommand{\cubebotfl}{$(\FPar_2,\times)^\op$ }
\renewcommand{\cubebotfr}{}

$$
\xymatrixrowsep{2mm}\xymatrixcolsep{0mm}
\xymatrix{
                                       & \mbox{\cubetopbl} \ar[rr] \ar[dl] \ar[dd]^(.7){\cong}      &                                                  & \mbox{\cubetopbr}  \ar[dd]^{\cong} \ar[dl] \\
\mbox{\cubetopfl} \ar[rr]  \ar[dd]_{\cong}           &                                                                                              &\mbox{\cubetopfr} \ar@{-->}[dd]^(.35){\cong}   \skewpullbackcorner[ul]              \\
                                       &  \mbox{\cubebotbl} \ar[dl] \ar[rr]                    &                                                  & \mbox{\cubebotbr} \ar@/^1pc/[ddl] \ar[dl] \\
\mbox{\cubebotfl} \ar@/_1pc/[drr] \ar[rr]  &                                                                                             & \mbox{\cubebotfr} \skewpullbackcorner[ul]    \ar@{-->}[d]^{\cong}  \\
                                                   &                                                                                             &( \FSpan_2,\times)
}
$$

This follows from \cite[Lem. 4.3]{zxa}.

\end{proof}

\end{document}